\newcommand{\Hom}{{\mathrm{Hom}}}
\DeclareMathAlphabet{\mathbbmsl}{U}{bbm}{m}{sl}
\def\picxys{{\rm Pic}_{X\be\times_{\be S}\lbe Y\be/\be S}}
\def\br{{\rm{Br}}\e}
\def\brp{{\rm{Br}}^{\e\prime}\lbe\lbe}
\def\picgs{\pic_{\be G\lbe/\lbe S}}	
\newcommand{\xs}{X^{\rm{s}}}
\newcommand{\abe}{{\rm{\bf Ab}}}
\def\brxs{{\rm{Br}}^{\e\prime}_{\be X\be/\lbe S}}
\def\upicxs{\be{\rm{UPic}}_{\lbe X\be/\lbe S}}
\def\upicgs{\be{\rm{UPic}}_{\e G\lbe/\lbe S}}
\def\upicys{{\rm{UPic}}_{\e Y\be/\lbe S}}
\def\upicxys{{\rm{UPic}}_{\lbe X\lbe\times_{\be S}\lbe Y\be/\lbe S}}
\def\ses{S_{\et}^{\e\sim}}
\def\bra{{\rm{Br}}_{\lbe\rm{a}}^{\e\prime}\lbe(\lbe X\be/\lbe S\e)}
\def\bras{{\rm{Br}}_{\lbe{\rm a},\e S}^{\e\prime}}
\def\bray{{\rm{Br}}_{\lbe\rm{a}}^{\e\prime}\lbe(\le Y\!/\be S\e)}
\def\braxy{{\rm{Br}}_{\lbe\rm{a}}^{\e\prime}\lbe(X\!\times_{\be S}\! Y\!/\lbe S\e)}
\def\npicxs{{\rm N}\lle\pic\be(\lbe X\!/\lbe S\e)}
\def\npiczs{{\rm N}\lle\pic\be(\lbe Z\be/\lbe S\e)}
\def\npic{{\rm N}\lle\pic\lle}
\def\brys{{\rm Br}^{\e\prime}_{Y\!/\lbe S}}
\newcommand{\nat}{\le\natural}
\def\uxs{{\rm U}_{\be X\be/\lbe S}\le}
\def\ruxs{{\rm RU}_{\be X\be/S}}
\def\ruys{{\rm RU}_{Y\be/S}}
\def\bg{{\mathbb G}}
\def\g{\varGamma}
\def\npic{{\rm N}\lle\pic\lbe }
\def\der{\e\rm{der}}
\def\rad{\e{\rm{rad}}}
\def\pic{{\rm{Pic}}\,}
\def\upic{{\rm{UPic}}}
\def\hom{{\rm{Hom}}\e}
\def\tor{\e\rm{tor}}
\def\cbs{C^{\e b}\lbe(S_{\et})}
\def\dbs{D^{\e b}\lbe(S_{\et})}
\def\Gtil{{\widetilde{G}}}
\def\Ttil{{\widetilde{T}}}
\newcommand{\isoto}{\overset{\!\sim}{\to}}
\def\g{\varGamma}
\def\uys{{\rm U}_{\le Y/S}\le}
\def\ugs{{\rm U}_{\lbe G/S}\le}
\def\picxs{{\rm Pic}_{X\be /S}}
\def\picys{{\rm Pic}_{\e Y\be /S}}
\def\nps{{\rm N\le Pic}_{\le S}}
\def\bro{{\rm{Br}}_{\lbe 1}^{\le\prime}\lbe\lle(\lbe X\!/\be S\e)}
\def\broys{{\rm{Br}}_{\lbe 1}^{\le\prime}(\le Y\!/\be S\e)}
\def\broxys{{\rm{Br}}_{\lbe 1}^{\le\prime}\lbe(\lbe X\!\times_{\be S}\!Y\!/\be S\e)}
\definecolor{labelkey}{rgb}{1,0,0}
\def\bm#1{\mathpalette\bmstyle{#1}}
\def\bmstyle#1#2{\mbox{\boldmath$#1#2$}}
\DeclareMathAlphabet{\mathcalligra}{T1}{calligra}{m}{n}
\numberwithin{equation}{section}
\newcommand{\sh}{\kern -.4em\phantom{a}^{\mathbf{\sim}}}
\newcommand{\lra}{\longrightarrow}
\newcommand{\et}{{\rm {\acute et}}}
\def\be{\kern -.1em}
\def\le{\kern 0.03em}
\def\lle{\kern 0.015em}
\def\lbe{\kern -.025em}
\newcommand{\Z}{{\mathbb Z}}
\newcommand{\N}{{\mathbb N}}
\newcommand{\spec}{\mathrm{ Spec}\,}
\newcommand{\krn}{\mathrm{Ker}\e}
\newcommand{\img}{\mathrm{Im}\e}
\newcommand{\cok}{\mathrm{Coker}\,}
\def\e{\kern 0.08em}
\newcommand{\s}{\mathscr }
\newcommand{\fl}{{\rm fl}}
\newcommand{\ks}{k^{\e\rm{s}}}
\newtheorem{lemma}{Lemma}[section]
\newtheorem{theorem}[lemma]{Theorem}
\newtheorem{proposition-definition}[lemma]{Proposition-Definition}
\newtheorem{corollary}[lemma]{Corollary}
\newtheorem{proposition}[lemma]{Proposition}
\theoremstyle{definition}
\newtheorem{definition}[lemma]{Definition}
\theoremstyle{remark}
\newtheorem{remark}[lemma]{Remark}
\newtheorem{remarks}[lemma]{Remarks}
\newtheorem{example}[lemma]{Example}
\definecolor{labelkey}{rgb}{1,0,0}
\begin{document}

\input xy     
\xyoption{all}

\title[The units\e-Picard complex of a reductive group scheme]{The 
units\e-Picard complex of a reductive group scheme}

\author{Cristian D. Gonz\'alez-Avil\'es}
\address{Departamento de Matem\'aticas, Universidad de La Serena, Cisternas 1200, La Serena 1700000, Chile}
\email{cgonzalez@userena.cl}
\thanks{The author is partially supported by Fondecyt grant 1160004.}
\date{\today}

\subjclass[2010]{Primary 20G35, 14L15}
\keywords{Units, Picard groups, Brauer groups, Reductive group schemes, torsors}

\maketitle

\topmargin -1cm

\smallskip

\maketitle

\dedicatory{\it In memory of Jost van Hamel}

\topmargin -1cm

\begin{abstract} Let $S$ be a locally noetherian regular scheme. We compute the units-Picard complex of a reductive $S$-group scheme $G$ in terms of the dual algebraic fundamental complex of $G$. To do so, we establish a units-Picard-Brauer exact sequence for a torsor under a smooth $S$-group scheme. 
\end{abstract}

\section{Introduction}

Let $S$ be a non-empty scheme and let $\dbs$ denote the derived category of the category of bounded complexes of abelian \'etale sheaves on $S$. A morphism of schemes $f\colon X\to S$ induces a morphism $f^{\nat}\colon \bg_{m,\le S}\to \tau_{\leq\e  1}\mathbb Rf_{\lbe *}\bg_{m,X}$ in $\dbs$ that factors through the canonical morphism (of \'etale sheaves on $S$\e) $f^{\le\flat}\colon \bg_{m,\le S}\to f_{*}\bg_{m,\le X}$. The (relative) {\it units-Picard complex of $X$ over $S$} is the object $\upicxs=C^{\e\bullet}\lbe(\le f^{\nat}\le)[1]$ of $\dbs$, where $C^{\e\bullet}\lbe(\le f^{\nat}\le)$ denotes the mapping cone of any morphism of complexes that represents $f^{\nat}$. There exists a distinguished triangle in $\dbs$
\[
\picxs[-1]\to\ruxs[1]\to\upicxs\to \picxs,
\]
where $\picxs$ denotes the \'etale relative Picard functor of $X$ over $S$ and $\ruxs=C^{\e\bullet}\lbe(\le f^{\le\flat}\le)$ is the complex of relative units of $X$ over $S$. Except for a shift, $\upicxs$ was originally introduced by Borovoi and van Hamel in the case where $S$ is the spectrum of a field of characteristic zero \cite{bvh1}. The more general object $\upicxs$ discussed in this paper was introduced in \cite[\S3]{ga5}, where the reader can find proofs of some of its main properties. Borovoi and van Hamel, and later Harari and Skorobogatov \cite{hsk}, showed that $\upicxs$ is well-suited for simplifying and generalizing a number of classical constructions \cite{san, cts}. Further, we have shown in \cite{ga5} that $\upicxs$ is important in the study of the Brauer group of $X$.

We now explain the contents of the paper.

Let $k$ be a field, $G$ a (connected) reductive $k$-group scheme and $\Gtil$ the simply connected central cover of $G$. Let $T$ be a maximal $k$-torus in $G$ and set $\Ttil=T\times_{G}\Gtil$. The {\it dual algebraic fundamental complex of $G$} is the object $\pi_{1}^{\le D}\lbe(G)$ of $D^{\e b}\lbe(k_{\e\et})$ represented by the mapping cone of the morphism $T^{\e *}\to \Ttil^{\e *}$ of $k$-group schemes of characters  induced by the canonical morphism $\Gtil\to G$. Up to isomorphism, $\pi_{1}^{\le D}\lbe(G)$ is independent of the choice of $T$. The main theorem of \cite{bvh1} establishes the existence of an isomorphism in $D^{\e b}\lbe(k_{\e\et})$ which is functorial in $G$:
\begin{equation}\label{upp}
{\rm UPic}_{\e G/\le k}\isoto\pi_{1}^{\le D}\lbe(G)
\end{equation}
(if the characteristic of $k$ is $0$, the reductivity assumption on $G$ can be dropped). Over a general base scheme $S$, $\pi_{1}^{\le D}\lbe(G)$ can be defined in terms of a {\it $t$-resolution} of $G$, i.e., a central extension of $S$-group schemes $1\to T\to H\to G\to 1$, where $T$ is an $S$-torus and $H$ is a reductive $S$-group scheme whose derived subgroup $H^{\lbe\der}$ is simply connected. Namely, if $R=H^{\tor}=H/H^{\lbe\der}$ is the largest quotient of $H$ that is an $S$-torus, then $\pi_{1}^{\le D}\lbe(G)$ is (represented by) the cone of the morphism of \'etale twisted constant $S$-group schemes $R^{\e *}\to T^{\e *}$ induced by $T\to H$. If $G$ contains a maximal $S$-torus $T$, then there exists a $t$-resolution $1\to\Ttil\to H\to G\to 1$ of $G$ where $H^{\tor}$ is canonically isomorphic to $T$ and we recover the Borovoi-van Hamel definition of $\pi_{1}^{\le D}\lbe(G)$. Up to isomorphism, $\pi_{1}^{\le D}\lbe(G)$ is independent of the choice of a $t$-resolution of $G$. The aim of this paper is to establish the following generalization of \eqref{upp}:

\begin{theorem} \label{1.1} {\rm (=Theorem \ref{ct})} Let $S$ be a locally noetherian regular scheme and let $G$ be a reductive $S$-group scheme. Then there exists an isomorphism in $\dbs$
\[
\upicgs\isoto\pi_{1}^{\le D}\be(G)
\]
which is functorial in $G$.
\end{theorem}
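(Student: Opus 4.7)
The plan is to choose a $t$-resolution $1\to T\to H\to G\to 1$ of $G$, view $H\to G$ as a $T$-torsor (which is legitimate because the extension is central), and feed this torsor into the units-Picard-Brauer exact sequence promised in the abstract. Once the $\upic$-complexes of $H$ and $T$ have been identified with the character modules $R^{*}$ and $T^{*}$ (each shifted once), this will exhibit $\upicgs$ as a representative of the cone that defines $\pi_{1}^{\le D}(G)$.

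The first preparatory computation is a canonical isomorphism $\upic_{T/S}\isoto T^{*}[1]$ in $\dbs$ for any $S$-torus $T$. Via the distinguished triangle $\picxs[-1]\to\ruxs[1]\to\upicxs\to\picxs$ recalled in the introduction, this reduces to the Rosenlicht-type decomposition $f_{*}\G_{m,T}=\G_{m,S}\oplus T^{*}$ (which yields $\ruxs\cong T^{*}$) together with the étale-local vanishing of $\picxs$ for a (locally split) torus. Applied to $R=H^{\tor}$ this also gives $\upic_{R/S}\cong R^{*}[1]$.

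The second preparation is $\upic_{H/S}\cong R^{*}[1]$. The standard fact $\upic_{H^{\der}\be/\lbe S}=0$ for a simply connected semisimple group (character module and relative Picard both vanish, étale-locally), combined with the units-Picard-Brauer triangle applied to the $H^{\der}$-torsor $H\to R$, forces the pullback $\upic_{R/S}\to\upic_{H/S}$ to be an isomorphism.

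Finally, applying the units-Picard-Brauer sequence to the $T$-torsor $H\to G$ and substituting $\upic_{T/S}\cong T^{*}[1]$ yields a distinguished triangle in $\dbs$
\[
\upicgs\to\upic_{H/S}\to T^{*}[1]\to\upicgs[1].
\]
Plugging in $\upic_{H/S}\cong R^{*}[1]$ presents $\upicgs$ as the cone of a morphism $R^{*}\to T^{*}$ in $\dbs$. The main obstacle—and the step on which the whole argument hinges—is to check that this morphism coincides (up to sign) with the character map dual to $T\into H\onto R$ defining $\pi_{1}^{\le D}(G)$; this requires tracing the connecting morphism in the units-Picard-Brauer triangle explicitly on the torsor $H\to G$, using the explicit representatives of $\upic_{H/S}$ and $\upic_{T/S}$ built in the preparatory steps. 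Functoriality in $G$ and independence of the chosen $t$-resolution then follow from the naturality of the units-Picard-Brauer sequence together with the standard argument that any two $t$-resolutions of $G$ can be dominated by a common refinement.
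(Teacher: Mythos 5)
Your outline assembles the same ingredients the paper uses: a $t$-resolution $1\to T\to H\to G\to 1$, the units\e-Picard\e-Brauer sequence of the torsor $H\to G$, the identifications ${\rm U}_{T/S}\cong T^{*}$ and $\upic_{H/S}\cong{\rm U}_{H/S}[1]\cong R^{\e*}[1]$ (the latter resting on $\pic_{\lbe H/S}=0$ because $H^{\lbe\der}$ is simply connected), and the computation of $\pic G^{\der}$ via $\mu^{*}$. The gap is the step where you assert that the units\e-Picard\e-Brauer sequence ``yields a distinguished triangle in $\dbs$''
\[
\upicgs\to\upic_{H/S}\to T^{\e*}[1]\to\upicgs[1],
\]
and likewise a ``units\e-Picard\e-Brauer triangle'' for the $H^{\lbe\der}$-torsor $H\to R$. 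What Proposition \ref{upb} and Corollary \ref{kcor} actually provide is a long exact sequence of abelian groups (resp.\ of \'etale sheaves), obtained from a \v{C}ech spectral sequence; a long exact sequence of cohomology sheaves does not produce a distinguished triangle inducing it, and cones are not functorial, so one cannot simply ``read off'' the triangle or its connecting map. This is exactly where the real work of the proof lies.

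The paper's route around this is: first show (Proposition \ref{ctl}) that the explicit two-term complex $C^{\e\bullet}({\rm U}_{\be S}(j))$ is quasi-isomorphic to $\pi_{1}^{\le D}(\s R\e)$ and has the same cohomology sheaves as $\upicgs$, using the sheaf-level exact sequence $1\to G^{\e*}\to H^{\e*}\to T^{\e*}\to\picgs\to 1$; then complete $\upic_{S}(p)$ to a distinguished triangle with an unknown third vertex $Z$ and invoke the fill-in axiom of triangulated categories to produce a comparison morphism $f\colon\upicgs\to C^{\e\bullet}({\rm U}_{\be S}(j))$. A fill-in is a priori neither unique nor an isomorphism; both points are secured by the vanishing $\Hom_{\dbs}(C^{\e\bullet}({\rm U}_{\be S}(j)),{\rm U}_{T/S})=\Hom(\mu^{\e*},T^{\e*})=0$ (a finite sheaf admits no nonzero map to a torsion-free one), by a roundtrip argument giving $f\circ\widetilde f=1$, and by a counting argument on the finite stalks of $\mu^{*}$ to upgrade surjectivity of $H^{0}(f)$ to bijectivity. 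The same $\Hom$-vanishing, not mere ``naturality of the sequence,'' is what makes the resulting isomorphism functorial in $G$: without it, different fill-ins for $\varphi\colon G'\to G$ need not agree. So your plan identifies the correct target and the correct inputs, but the passage from the exact sequence to the triangle --- which you flag as the ``main obstacle'' while presupposing the triangle exists --- is the missing argument rather than a routine verification.
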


As in \cite{bvh1}, the main ingredients of the proof of Theorem \ref{1.1} are certain long exact sequences of abelian groups of the form
\begin{equation}\label{long}
\dots\to G_{2}^{\e *}\to G_{1}^{\e *}\to \pic G_{3}\to \pic G_{2}\to\dots
\end{equation}
induced by short exact sequences of $S$-group schemes $1\to G_{1}\to G_{2}\to G_{3}\to 1$. In this paper we derive all the sequences of the form \eqref{long} that we need for the proof of Theorem \ref{1.1} from the {\it units-Picard-Brauer exact sequence of a torsor} given in Proposition \ref{upb}. The latter is a broad generalization of an exact sequence of Sansuc \cite[Proposition 6.10, p.~43]{san}, which itself generalizes a well-known exact sequence of Fossum and Iversen \cite[Proposition 3.1]{fi}. On the other hand, in constrast to \cite{bvh1} (and \cite[Appendix B]{ct08}), we avoid working with explicit presentations of $\upicgs$ (in terms of rational functions and divisors on $G\e$) since such presentations are not needed for the proof of Theorem \ref{1.1} (as already suggested in \cite[Remark 4.6]{bvh1}).

\smallskip

The theorem has interesting applications to the arithmetic of torsors under reductive $S$-group schemes, as we hope to show in a subsequent publication.

\section*{Acknowledgements}

I thank Mikhail Borovoi for helpful comments.

\section{Preliminaries}

\subsection{Generalities}\label{gen}

If $X$ is an object of a category, the identity morphism of $X$ will be denoted by $1_{\be X}$. An {\it exact and commutative diagram} is a commutative diagram with exact rows and columns. The category of abelian groups will be denoted by $\abe$.

The {\it maximal points} of a topological space are the generic points of its irreducible components. If $f\colon X\to S$ is a morphism of schemes, a {\it maximal fiber of $f$} is a fiber of $f$ over a maximal point of $S$. Recall also that $f$ is called {\it schematically dominant} if the canonical morphism (of Zariski sheaves on $S\e$) $f^{\#}\!\colon\! \s O_{\be S}\to f_{*}\s O_{\be X}$  is injective \cite[\S5.4]{ega1}. By \cite[Proposition 52, p.~10]{pic}, a faithfully flat morphism is schematically dominant.

\smallskip

\begin{lemma}\label{ker-cok}  Let  $A\overset{\!u}{\to}B\overset{\!v}{\to}C$ be  morphisms in an abelian category $\s A$. Then there exists a canonical exact sequence in $\s A$
\[
0\to\krn u\to\krn\lbe(\e v\lbe\circ\lbe\be u\e)\to\krn v\to\cok u\to\cok\be(\e v\lbe\circ\lbe u\e)\to\cok v\to 0.
\]
\end{lemma}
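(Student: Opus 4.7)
My plan is to derive the six-term exact sequence by applying the snake lemma to a carefully chosen diagram, then splicing on the easy three-term left-hand piece. Specifically, consider the commutative diagram with exact rows
\[
\begin{array}{ccccccccc}
& & A & \xrightarrow{\,u\,} & B & \to & \cok u & \to & 0 \\
& & \downarrow{\scriptstyle v\circ u} & & \downarrow{\scriptstyle v} & & \downarrow{\scriptstyle 0} & & \\
0 & \to & C & \xrightarrow{1_{C}} & C & \to & 0 & &
\end{array}
\]
The left square commutes because $v\circ u = 1_{C}\circ(v\circ u)$, and the right square commutes trivially. The top row is right-exact by definition of $\cok u$, the bottom row is exact, and the lower-right horizontal is surjective.

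Applying the snake lemma yields a canonical exact sequence
\[
\krn(v\lbe\circ\lbe u)\to\krn v\to\krn(\be 0\be)\to\cok(v\lbe\circ\lbe u)\to\cok v\to\cok(\be 0\be)\to 0,
\]
where the third term is $\krn(0\colon\cok u\to 0)=\cok u$ and the last term is $\cok(0\colon\cok u\to 0)=0$. Thus we obtain the right-hand five terms of the desired sequence
\[
\krn(v\lbe\circ\lbe u)\to\krn v\to\cok u\to\cok(v\lbe\circ\lbe u)\to\cok v\to 0,
\]
and, inspecting the snake construction, the connecting map $\krn v\to\cok u$ is induced by the inclusion $\krn v\hookrightarrow B$ followed by the projection $B\twoheadrightarrow\cok u$, while the first map is the restriction of $u$.

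For the left-hand piece, the inclusion $\krn u\hookrightarrow A$ factors through $\krn(v\lbe\circ\lbe u)$ since $u(a)=0$ implies $(v\lbe\circ\lbe u)(a)=0$; this gives a canonical monomorphism $\krn u\into\krn(v\lbe\circ\lbe u)$. Its cokernel inside $\krn(v\lbe\circ\lbe u)$ is exactly the subobject on which $u$ does not vanish, so the sequence
\[
0\to\krn u\to\krn(v\lbe\circ\lbe u)\xrightarrow{\,u|\,}\krn v
\]
is exact. Since the map $\krn(v\lbe\circ\lbe u)\to\krn v$ here coincides with the one appearing in the snake sequence, we may splice the two pieces to obtain the claimed six-term exact sequence.

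The construction is entirely canonical (all maps are induced from $u$, $v$, $1_{C}$ and the universal properties of kernels and cokernels), so functoriality and the absence of ambiguity are automatic. The only real choice is the diagram to which one applies snake; once made, there is no serious obstacle. If one prefers to avoid the snake lemma, the same six maps can be written down directly and exactness at each of the six spots verified by a short diagram chase (in an arbitrary abelian category, via Mitchell's embedding theorem or pseudo-elements), but the snake-lemma route is the cleanest.
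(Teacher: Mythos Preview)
Your proof is correct. The paper does not actually prove this lemma but simply cites \cite[\S1.2]{bey}; your snake-lemma argument applied to the diagram with rows $A\xrightarrow{u}B\to\cok u\to 0$ and $0\to C\xrightarrow{1_C}C\to 0$ is precisely the standard derivation of the kernel--cokernel sequence (and is essentially the argument in Beyl's paper), so you have supplied what the paper omits.
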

\begin{proof} See, for example, \cite[\S1.2]{bey}.
\end{proof}

\smallskip

If $S$ is a scheme (which is tacitly assumed to be non-empty throughout the paper), we will write $S_{\fl}$ for the small fppf site over $S$. Thus $S_{\fl}$ is the category of $S$-schemes that are flat and locally of finite presentation over $S$ equipped with the fppf topology, i.e., a covering of $(X\to S\e)\in S_{\fl}$ is a surjective family $U_{i}\to X$ of flat $S$-morphisms locally of finite presentation. We will also need the small \'etale site $S_{\et}$, which is defined as above by writing {\it \'etale} in place of {\it flat and locally of finite presentation}.

\smallskip

A sequence of $S$-group schemes $1\to G_{1}\to G_{2}\to G_{3}\to 1$ will be called {\it exact} if the corresponding sequence of (representable) sheaves of groups on $S_{\fl}$
is exact.

\smallskip

We will write $\ses$ for the (abelian) category of abelian sheaves on $S_{\et}$ and $\cbs$ for the category of bounded complexes of objects of $\ses$. The corresponding derived category will be denoted by $\dbs$. The {\it (mapping) cone} of a morphism  $u\colon A^{\bullet}\to B^{\le\bullet}$ in $\cbs$ is the complex $C^{\e\bullet}(u)$ whose $n$-th component is $C^{\e n}(u)=A^{n+1}\oplus B^{\le n}$, with differential $d_{\lle C(u)}^{\e n}\lbe(a,b)=(-d^{\e n+1}_{\be A}\lbe(a), u(a)+d_{\lbe B}^{\e n}(b))$, where $n\in\Z,a\in A^{n+1}$ and $b\in B^{\le n}$. If $u\colon A\to B$ is a morphism in $\ses$ and $A$ and $B$ are regarded as complexes concentrated in degree 0, then $C^{\e\bullet}\lbe(u)=(\e A\overset{\!u}\to B\e)$, where $A$ and $B$ are placed in degrees $-1$ and $0$, respectively. Thus $H^{-1}(C^{\e\bullet}\lbe(u))=\krn\e u$ and $H^{0}(C^{\e\bullet}\lbe(u))=\cok u$. The distinguished triangle corresponding to $u$ is
\begin{equation}\label{v}
C^{\e\bullet}\lbe(u)[-1]\overset{v}{\to} A\overset{u}\to B\to C^{\e\bullet}\lbe(u),
\end{equation}
where the map $v\colon C^{\e\bullet}\lbe(u)[-1]=(\e A\overset{\!u}\to B\e)[-1]\to A$ is the negative of the canonical projection. We also recall that, if $A^{\bullet}$ is a bounded below complex of objects of $\ses$ and $n\geq 0$ is an integer, then the {\it $n$-th truncation} of $A^{\bullet}$ is the object of $\cbs$
\[
\tau_{\leq\le n}\le A^{\bullet}=\dots \to A^{n-2}\to A^{n-1}\to\krn\lbe[\le A^{n}\to A^{n+1}]\to 0.
\]
For every $n\in\N$, there exists a distinguished triangle in $\dbs$:
\begin{equation}\label{n}
\tau_{\leq\e  n-1}A^{\bullet}\overset{i^{n}}\to \tau_{\leq\e  n}\le A^{\bullet}\to H^{n}(A^{\bullet})[-n]\to(\tau_{\leq\e  n-1}A^{\bullet})[1].
\end{equation}

Next we recall the definition of the separable index of a scheme over a field. If $k$ is a field with fixed separable algebraic closure $\ks$ and $X$ is a $k$-scheme such that $X\lbe(k^{\e\prime}\le)\neq \emptyset$ for some finite subextension $k^{\e\prime}\be/k$ of $\ks\be/k$, then the {\it separable index of $X$ over $k$} is the integer
\[
I(X)=\gcd\{[k^{\e\prime}\be\colon\be k\e]\colon \text{$k^{\e\prime}\!/\le k\subset \ks\be/k$ finite and $X\!\left(k^{\e\prime}\e\right)\neq \emptyset$}\}.
\]
The positive integer $I(X)$ is defined if $X$ is geometrically reduced and locally of finite type over $k$ (see \cite[\S3.2, Proposition 2.20, p.~93]{liu} for the finite type case and note that the more general case can be obtained by applying the indicated reference to a nonempty open affine subscheme of $X$). The above is a particular case of the following definition: if $f\colon X\to S$ is a morphism of schemes, the {\it \'etale index} $I(\le f\le)$ of $f$ is the greatest common divisor
of the degrees of all finite \'etale quasi-sections of $f$ of constant degree, if any exist. Note that $I(\le f\le)$ is defined (and is equal to 1) if $f$ has a section.

\subsection{Torsors} \label{tsec} If $G$ is an $S$-group scheme, a  {\it right $G$-scheme} (over $S\e$) is an $S$-scheme $X$ equipped with a right action of $G$ over $S$, i.e., a morphism of schemes $X\times_{S}\le G\to X$ such that, for any $S$-scheme $T$, the corresponding maps of sets $X(\le T\e)\times G(\le T\e)\to X(\le T\e), (x,g)\mapsto xg,$ form a functorial system of right actions of the ordinary group $G(\le T\e)$ on the set $X(\le T\e)$. A left 
$G$-scheme is defined similarly, and the preceding definitions also apply if $X$ is a (possibly non-representable) fppf sheaf of sets on $S$. For example, if $u\colon G\to G^{\e\prime}$ is a morphism of $S$-group schemes, then $X=G^{\e\prime}$ is a left $G$-scheme via $u$, i.e., relative to the left action $G\times_{S}G^{\e\prime}\to G^{\e\prime}, (\e g,g^{\e\prime}\e)\mapsto u(\e g)g^{\e\prime}$.

If $G$ and $G^{\e\prime}$ are $S$-groups schemes and $X$ is both a left $G$-scheme (or fppf sheaf) and a right $G^{\e\prime}$-scheme, then $X$ is called a {\it $(G,G^{\e\prime}\e)$-scheme} (over $S\e$) if the given actions are compatible, i.e., if $g(\lle xg^{\e\prime}\e)=(\e g\le x\e)g^{\e\prime}$ for all $g\in G(\le T\e), x\in X(\le T\e)$ and $g^{\e\prime}\in G^{\e\prime}(\le T\e)$, where $T$ is any $S$-scheme. For example, if $u\colon G\to G^{\e\prime}$ is a morphism of $S$-group schemes and $G^{\e\prime}_{\lbe r}$ denotes $G^{\e\prime}$ regarded as a left $G$-scheme via $u$ and as a right $G^{\e\prime}$-scheme via right translations, then $G^{\e\prime}_{\lbe r}$ is a $(G,G^{\e\prime}\e)$-scheme.

Now let $X$ (respectively, $Y$\e) be a right (respectively, left\e) $G$-scheme. Then $X\times_{S} Y$ is a right $G$-scheme under the action $(\e X\be\times_{S}\lbe Y\e)\times_{S}G\to X\!\times_{S}\be Y, (x,y,g)\mapsto (\e xg,g^{-1}y)$. We will write $X\wedge^{\lbe G}\e Y$ for the corresponding quotient fppf sheaf of sets $X\times_{S}Y/G$. A particular instance of the preceding construction arises as follows: if $u\colon G\to G^{\e\prime}$ and $v\colon G\to G^{\e\prime\prime}$ are morphisms of $S$-group schemes and $G^{\e\prime}$ (respectively, $G^{\e\prime\prime}$) is regarded as a right (respectively, left) $G$-scheme via $u$ (respectively, $v$), then the sheaf $G^{\e\prime}\!\wedge^{\be u,\le G,\le v} G^{\e\prime\prime}$ is called the {\it pushout} of $u$ and $v$. We also note that, if $X$ is any right $G$-scheme and $Y$ is a $(G,G^{\e\prime}\e)$-scheme, then the right action of $G^{\e\prime}$ on $Y$ endows $X\be\wedge^{\lbe G}\e Y$ with the structure of a right $G^{\e\prime}$-sheaf. In particular, if $u\colon G\to G^{\e\prime}$ is a morphism of $S$-group schemes and $Y=G^{\e\prime}_{\lbe r}$ is the $(G,G^{\e\prime}\e)$-scheme considered previously, then the right $G^{\e\prime}$-sheaf $X\wedge^{\lbe G,\le u} G^{\e\prime}_{\be r}$ is said to be {\it obtained from $X$ by extending its structural group from $G$ to $G^{\e\prime}$ via $u$} \cite[Proposition 1.3.6, p.~116]{gi}.

Now recall that a right $G$-scheme (or fppf sheaf\e) $X$ over $S$ is called a {\it right} (fppf) {\it $G$-torsor over $S$} if the following conditions hold: (a) the canonical morphism $X\times_{S}\le G\to X\times_{S}X, (\e x,g)\to (x,xg),$ is an isomorphism, and (b) locally for the fppf topology on $S$, the structural morphism $X\to S$ has a section. We will write $H^{\le 1}(S_{\fl},G\e)$ for the pointed set of isomorphism classes of right $G$-torsors over $S$, where the base point is the isomorphism class of the trivial right $G$-torsor $G_{r}$.

For example, if $H$ is a subgroup scheme of $G$ such that the quotient fppf sheaf $G\be/\be H$ is represented by an $S$-scheme $F$, then $G$ is an $H_{\lbe F}\e$-\e torsor over $F$ if $H_{F}$ acts on $G$ via right translations. In this case condition (a) above is a generalization of the following ordinary group\e-\le theoretic fact: if $G$ is an ordinary group, $H$ is a subgroup of $G$ and $(\e g,g^{\e\prime}\le)\in G\times_{(\le G/H\le)}G$, i.e., $gH=g^{\e\prime}H$, then $g^{\e\prime}=gh$ for a unique element $h\in H$, namely $h=g^{-1}g^{\e\prime}$\,\e\footnote{\e That $G\to F$ satisfies condition (b) of the definition follows from the fact that the map of sets $G(\le S\e)\to (G/H)(\le S\e)\simeq F(\le S\e)$ is locally surjective for the fppf topology on $S\e$.}\e.
More generally (in fact, equivalently), if 
\begin{equation}\label{pri}
1\to H\overset{\!i}{\to} G\overset{\!q}{\to} F\to 1
\end{equation}
is an exact sequence of $S$-group schemes, then the right action 
$G\lbe\times_{ F}\be(\e H\be\times_{\lbe S}\be F\e)\to G,\, (\e g, (\e h,q\le(\e g\le)))\mapsto g\le\le i\lbe(\lle h\le)$, endows $G$ with the structure of a right $H_{F}\e$-\e torsor over $F$. In this case we will write $[\e G\,]$ for the isomorphism class of $G$ in $H^{\le 1}\be(F_{\fl},H_{\lbe F}\be)$.

If $u\colon G\to G^{\e\prime}$ is a morphism of $S$-group schemes and $X$ is a right $G$-torsor over $S$, then the right $G^{\e\prime}$-sheaf  $X\wedge^{\be G,\le u} G^{\e\prime}_{\lbe r}$ defined above is a right $G^{\e\prime}$-torsor over $S$ \cite[Proposition 1.4.6(i), p.~119]{gi}. Thus $u$ induces a map of pointed sets
\begin{equation}\label{u1}
u^{(1)}\colon   H^{\le 1}\be(S_{\fl},G\e)\to H^{\le 1}\be(S_{\fl},G^{\e\prime}\e), [\e X\e]\mapsto [\e X\!\wedge^{\be\lle G,\le u}\! G^{\e\prime}_{\lbe r}\,]\e.
\end{equation}
If $T\to S$ is any morphism of schemes, then the following diagram of pointed sets commutes
\[
\xymatrix{H^{\le 1}\be(S_{\fl},G\e)\ar[d]\ar[r]^{u^{(1)}}& H^{\le 1}\be(S_{\fl},G^{\e\prime}\e)\ar[d]\\
H^{\le 1}\be(T_{\fl},G_{T}\e)\ar[r]^{u_{T}^{(1)}}& H^{\le 1}\be(T_{\fl},G_{\lbe T}^{\e\prime}\e),
}
\]
where the vertical maps are induced by $X\mapsto X_{T}$ for any $G$ (respectively, $G^{\e\prime}\e$)-torsor $X$ over $S$. In other words, there exists a (canonical) isomorphism of $G_{T}\e$\e-torsors over $T$
\begin{equation}\label{bcf}
(X\!\wedge^{\be\lle G,\le u}\! G^{\e\prime}_{\le r}\e)\times_{S}T\simeq X_{T}\!\wedge^{\be\lle G_{T},\le u_{T}}\! G_{T,\e r}^{\e\prime}.
\end{equation}
See \cite[1.5.1.2, p.~316]{gi}.

Now let $u\colon H\to K$ be a morphism of $S$-group schemes, where $H=\krn\e i$ is the left-hand term in \eqref{pri}, and let $P=G\be\wedge^{ i,\e H\lbe,\e u}\be\lbe K$ be the pushout of $i$ and $u$. Then there 
exists an exact and commutative diagram of fppf sheaves of groups on $S$
\begin{equation}\label{pdia}
\xymatrix{1\ar[r]& H\ar[d]_{u}\ar[r]^{i}& G\ar[d]\ar[r]^{q}& F\ar@{=}[d]\ar[r]&1\\
1\ar[r]& K\ar[r]^{j}& P\ar[r]^{p}& F\ar[r] & 1,
}
\end{equation}
where the maps $j$ and $p$ are defined as follows: if $\pi_{\lbe K}\colon K\to S$ is the structural morphism of $K$ and $\varepsilon_{\lbe G}\colon S\to G$ and $\varepsilon_{\lbe F}\colon S\to F$ are the unit sections of $G$ and $F$, respectively, then $j$ and $p$ are induced, respectively, by $(\varepsilon_{\lbe G}\circ\pi_{\be K},1_{\lbe K}\lbe)_{S}\colon K\to G\times_{S}K$ and $m\circ (\, q\times_{S}(\varepsilon_{\lbe F}\circ\pi_{\be K}))\colon G\times_{S}K\to F\times_{S}F\to F$, where $m\colon F\times_{S}F\to F$ is the product morphism of $F$. As explained above, the bottom row of diagram \eqref{pdia} equips $P$ with the structure of a right $K_{\be F}\e$-\e torsor over $F$. Using the explicit definitions of the maps $j$ and $p$ given above, it is not difficult to check that the canonical isomorphism of $S$-schemes $G\times_{ S} K\isoto G\times_{ F} K_{\lbe F}$ induces an isomorphism of right $K_{\be F}\e$\e-torsors over $F$
\begin{equation}\label{tbc}
G\be\wedge^{ i,\e H\lbe,\e u}\! K\isoto G\be\wedge^{\be H_{\lbe F}\lbe,\e u_{\lbe F}}\! K_{\be F,\e r}\le.
\end{equation}
Thus $[\e P\e]=u_{F}^{(1)}\lbe(\lle [\e G\e]\lle)$, where $u_{F}^{(1)}\colon H^{\le 1}\lbe(F_{\fl},H_{\lbe F}\lbe)\to H^{\le 1}\lbe(F_{\fl},K_{\lbe F}\lbe)$ is the map \eqref{u1} induced by the morphism of $F$-group schemes $u_{F}\colon H_{F}\to K_{F}$.

\medskip

\subsection{Units, Picard groups and Brauer groups} Let $f\colon X\to S$ be a morphism of schemes and let $f^{\e\flat}\colon \bg_{m,\le S}\to f_{\lbe *}\bg_{m,\le X}$ be the morphism of abelian sheaves on $S_{\et}$ induced by $f$. The {\it complex of relative units of $X$ over $S$} is the complex
\begin{equation}\label{ruxs}
\ruxs=C^{\e\bullet}\lbe(\e f^{\e\flat})=(\bg_{m,\le S}\overset{f^{\le\flat}}{\to} f_{\lbe *}\bg_{m,\le X}),
\end{equation}
where $\bg_{m,\le S}$ and $f_{\lbe *}\bg_{m,\le X}$ are placed in degrees $-1$ and $0$, respectively.

If $X\overset{\!g}{\to}Y\overset{\!h}{\to}S$ are morphisms of schemes, then
$(\e h\!\circ\! g\e)^{\le\flat}\colon \bg_{m,S}\to h_{\le *}(\e g_{\le *}\bg_{m,\le X}\lbe)$ factors as
\begin{equation}\label{pair}
\bg_{m,S}\overset{h^{\flat}}{\to}h_{\le *}\bg_{m,Y}\overset{h_{\lle *}\lbe\lbe(\le g^{\le\flat}\lbe)}{\lra}h_{\le *}\lbe(\e g_{\le *}\bg_{m,X}\lbe),
\end{equation}
Consequently, $h_{\le *}\lbe(\e g^{\e\flat})$ induces a morphism ${\rm RU}_{\lbe S}(\e g)\colon\ruys\to\ruxs$ in $\cbs$. Thus \eqref{ruxs} defines a contravariant functor
\[
{\rm RU}_{\lbe S}\colon (\textrm{Sch}/S\le)\to\cbs, (X\to S\e)\mapsto\ruxs.
\]
Now set
\begin{equation}\label{uxs2}
\uxs=H^{\e 0}(\ruxs)=\cok f^{\le\flat}
\end{equation}
and consider the contravariant functor
\[
{\rm U}_{\be S}\colon (\textrm{Sch}/S\le)\to\ses, (X\to S\e)\mapsto\uxs.
\]
If $f$ is schematically dominant, then $f^{\e\flat}$ is injective \cite[Lemma 2.4]{ga4} and $\ruxs=\uxs$ in degree 0.

\begin{lemma}\label{use} Let $X\overset{\!g}{\to}Y\overset{\!h}{\to}S$ be morphisms of schemes, where $g$ is schematically dominant. Then the canonical morphism ${\rm U}_{\be S}(\e g)\colon\uys\to\uxs$ is injective.	
\end{lemma}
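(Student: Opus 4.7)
The plan is to express $\mathrm U_{\lbe S}(\e g)$ as the map between cokernels induced by the two-step factorisation \eqref{pair} and then apply Lemma \ref{ker-cok} to conclude injectivity. Concretely, setting $u=h^{\le\flat}\colon\bg_{m,\le S}\to h_{\lbe *}\bg_{m,\le Y}$ and $v=h_{\lbe *}(\e g^{\le\flat})\colon h_{\lbe *}\bg_{m,\le Y}\to h_{\lbe *}\lbe(\e g_{\lbe *}\bg_{m,\le X}\lbe)$, the morphism $(h\circ g)^{\le\flat}$ equals $v\circ u$, and by construction $\mathrm U_{\lbe S}(\e g)$ coincides with the canonical map $\cok u\to\cok(v\circ u)$.

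First I would observe that, since $g$ is schematically dominant, the sheaf morphism $g^{\le\flat}\colon\bg_{m,\le Y}\to g_{\lbe *}\bg_{m,\le X}$ on $Y_{\et}$ is injective (this is exactly the fact invoked immediately before the lemma, citing \cite[Lemma 2.4]{ga4}). Next, since the direct image functor $h_{\lbe *}\colon \ses[Y_{\et}]\to\ses$ is left exact, applying it to this injection yields that $v=h_{\lbe *}(\e g^{\le\flat})$ is injective as an abelian sheaf morphism on $S_{\et}$; equivalently, $\krn v=0$.

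Finally I would feed $A=\bg_{m,\le S}$, $B=h_{\lbe *}\bg_{m,\le Y}$, $C=h_{\lbe *}\lbe(\e g_{\lbe *}\bg_{m,\le X}\lbe)$ and the maps $u,v$ into Lemma \ref{ker-cok}, which produces the exact sequence
\[
0\to\krn u\to\krn\lbe(\e v\lbe\circ\lbe u\e)\to\krn v\to\cok u\to\cok\be(\e v\lbe\circ\lbe u\e)\to\cok v\to 0
\]
in $\ses$. The portion relevant here is the four-term stretch $\krn v\to\cok u\to\cok(v\circ u)$, which shows that the kernel of $\cok u\to\cok(v\circ u)$ is the image of $\krn v$. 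Since $\krn v=0$, the map $\cok u\to\cok(v\circ u)$, i.e.\ $\mathrm U_{\lbe S}(\e g)\colon\uys\to\uxs$, is injective, as required.

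The argument is essentially a two-line application of Lemma \ref{ker-cok}, so there is no genuine obstacle; the only point to verify carefully is that $\mathrm U_{\lbe S}(\e g)$ really is the cokernel map induced by the factorisation \eqref{pair}, which is immediate from the definition of the functor $\mathrm U_{\lbe S}$ given via the pushforward $h_{\lbe *}(\e g^{\le\flat})$ just before \eqref{uxs2}.
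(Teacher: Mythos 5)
Your proposal is correct and is essentially the paper's own proof: the paper likewise applies Lemma \ref{ker-cok} to the factorisation \eqref{pair}, using the injectivity of $h_{\lbe *}(\e g^{\le\flat})$ (which follows from the schematic dominance of $g$ and the left exactness of $h_{\lbe *}$) to conclude that the induced map on cokernels $\uys\to\uxs$ is injective. You have merely spelled out the details that the paper leaves implicit.
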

\begin{proof} Apply Lemma \ref{ker-cok} to the pair of morphisms \eqref{pair} using the injectivity of
$h_{*}\lbe\lbe(\le g^{\le\flat}\lbe)\colon	\bg_{m,Y}\to h_{*}\lbe(\e g_{*}\bg_{m,X}\lbe)$.	
\end{proof}

If $G$ is an $S$-group scheme, the presheaf of groups
\begin{equation}\label{gc}
G^{\e *}=\underline{\hom}_{\, S\text{-gr}}(G,\bg_{m,\e S}\e)
\end{equation}
is a sheaf on $S_{\et}$ such that
\begin{equation}\label{gc2}
G^{\le\lle *}\be(\lbe S\le)=\hom_{S\text{-gr}}(G,\bg_{m,\e S}\e).
\end{equation}
See \cite[IV, Corollary 4.5.13 and Proposition 6.3.1(iii)]{sga3}.

\begin{lemma}\label{gros} Let $S$ be a reduced scheme and let $G$ be a 
flat $S$-group scheme locally of finite presentation with smooth and connected maximal fibers. Then there exists a canonical isomorphism of \'etale sheaves on $S$
\begin{equation}\label{us}
\omega_{\le G}\colon \ugs\isoto G^{\e *},
\end{equation}
where $\ugs$ and $G^{\e *}$ are given by \eqref{uxs2} and \eqref{gc}, respectively.
\end{lemma}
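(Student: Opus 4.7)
The plan is to construct $\omega_{\le G}$ as the inverse of a canonical injection $\iota\colon G^{\e *}\hookrightarrow\ugs$, and then to verify $\iota$ is surjective by stalk analysis. The unit section $\varepsilon\colon S\to G$ induces a retraction $\varepsilon^{\e\flat}\colon f_{\lbe *}\bg_{m,\le G}\to\bg_{m,\le S}$ of $f^{\e\flat}$, yielding a canonical splitting $f_{\lbe *}\bg_{m,\le G}\simeq\bg_{m,\le S}\oplus\krn\varepsilon^{\e\flat}$ and hence an isomorphism $\ugs\isoto\krn\varepsilon^{\e\flat}$. Regarding a character $\chi\colon G\to\bg_{m,\le S}$ as an invertible function on $G$ produces an inclusion $G^{\e *}\hookrightarrow f_{\lbe *}\bg_{m,\le G}$ which factors through $\krn\varepsilon^{\e\flat}$ since $\chi\!\circ\!\varepsilon=1$; composing yields the injection $\iota$, and one sets $\omega_{\le G}:=\iota^{\e -1}$ on its image. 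It remains to show $\iota$ is surjective.

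Surjectivity can be tested on étale stalks. Fix a geometric point $\bar s\to S$ and let $A=\s O_{\! S,\bar s}^{\,\rm sh}$; this is a strictly henselian local ring that is reduced because $S$ is. Since $G$ is locally of finite presentation, the standard limit formula identifies
\[
(f_{\lbe *}\bg_{m,\le G})_{\bar s}=\g(G_{\lbe A},\s O^{\e *}),\qquad (G^{\e *})_{\bar s}=\hom_{A\text{-gr}}(G_{\lbe A},\bg_{m,\le A}),
\]
so the required statement becomes: every $u\in\g(G_{\lbe A},\s O^{\e *})$ with $u\!\circ\!\varepsilon=1$ is a character of $G_{\lbe A}$.

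The heart of the proof is a Rosenlicht-type computation. Form the coboundary
\[
\delta u\;:=\;m^{\le *}(u)\cdot p_{\e 1}^{\le *}(u)^{-1}\cdot p_{\e 2}^{\le *}(u)^{-1}\;\in\;\g(G_{\lbe A}\!\times_{\! A}\!G_{\lbe A},\,\s O^{\e *}),
\]
where $m,p_{\e 1},p_{\e 2}\colon G_{\lbe A}\!\times_{\! A}\!G_{\lbe A}\to G_{\lbe A}$ are multiplication and the two projections. Then $u$ is a character if and only if $\delta u=1$, and the normalization $u\!\circ\!\varepsilon=1$ forces $\delta u$ to restrict to $1$ along both axes $\{\varepsilon\}\!\times\!G_{\lbe A}$ and $G_{\lbe A}\!\times\!\{\varepsilon\}$. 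For each maximal point $\eta$ of $\spec A$, the fiber $G_{\eta}\!\times_{\kappa(\eta)}\!G_{\eta}$ is smooth and geometrically connected over $\kappa(\eta)$ (geometric connectedness is automatic for a smooth connected group scheme with a rational identity), so Rosenlicht's classical theorem on units on a product of smooth connected varieties forces $\delta u|_{\eta}=1$ (one may first base-change to $\overline{\kappa(\eta)}$ and descend via faithful flatness). Finally, $G_{\lbe A}\!\times_{\! A}\!G_{\lbe A}$ is flat over the reduced ring $A$ with reduced maximal fibers, hence itself reduced, and its global sections embed into the product over maximal $\eta$ of the sections of the fibers, so $\delta u=1$ globally.

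The main obstacle I anticipate is the concluding flat-descent step: one must know that $G_{\lbe A}\!\times_{\! A}\!G_{\lbe A}$ is reduced and that its global sections embed into the product of its fiber sections at maximal points of $\spec A$. The former follows from the fact that a flat algebra over a reduced base whose generic fibers are reduced is itself reduced; the latter from the fact that a reduced ring always embeds into the product of its residue fields at minimal primes, combined with flatness. The Rosenlicht input itself is classical.
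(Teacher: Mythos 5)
Your argument is correct and is, in substance, the argument the paper delegates to its reference: the proof given here is only the citation of \cite[Lemma 4.8]{ga4} together with the formula $v\mapsto v/(v\circ\varepsilon\circ\pi)$, and that formula is exactly your projection onto $\krn\varepsilon^{\e\flat}$, so your $\iota$ is the inverse of the paper's $\omega_{\le G}$; the real content in both cases is the relative Rosenlicht statement that a unit on $G$ normalized along the unit section is a character, proved by killing the coboundary $\delta u$ fiberwise over the maximal points and then globalizing via flatness and reducedness. The only places needing a word of care are routine: the identification of the stalk of $f_{\lbe *}\bg_{m,\le G}$ with $\Gamma(G_{\lbe A},\s O^{\e *})$ uses the usual limit argument (quasi-compactness of $G$ over $S$, satisfied in all applications), and your concluding step that $B$ flat over reduced $A$ with reduced generic fibers is reduced, together with the embedding $B\hookrightarrow\prod_{\eta}B\otimes_{A}\kappa(\eta)$, uses that $A$ has finitely many minimal primes — automatic in the locally noetherian setting where the lemma is applied.
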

\begin{proof} See \cite[Lemma 4.8]{ga4}. The isomorphism of abelian groups $\omega_{G}(S\e)$ is induced by the map
\begin{equation}\label{qmap}
\hom_{S\text{-sch}}(G,\bg_{m,\e S}\e)\to\hom_{S\text{-gr}}(G,\bg_{m,\e S}\e), v\mapsto v/v\be\circ\be\varepsilon\be\circ\be \pi,
\end{equation}
where $\varepsilon\colon S\to G$ and $\pi\colon G\to S$ and the unit section and structural morphism of $G$, respectively.
\end{proof}

If $X$ is a scheme, the \'etale cohomology group $\pic X=H^{\le 1}(X_{\et},\bg_{m,\le X})$ will be identified with the group of isomorphism classes of right $\bg_{m,\le X}$-torsors over $X$ with respect to either the \'etale or fppf topologies on $X$. See \cite[Theorem 4.9, p.~124]{mi1}. A morphism of schemes $g\colon X\to Y$ induces a morphism of abelian groups
\begin{equation}\label{pmap}
\pic g\colon \pic Y\to \pic X, [\e E\e]\mapsto [\e E\!\times_{Y}\!\be X\e],
\end{equation}
where $[\e E\e]$ denotes the isomorphism class of the right $\bg_{m,\le Y}$-torsor $E$ over $Y$ and $[\e E\!\times_{Y}\!\be X\e]$ denotes the isomorphism class of the right $\bg_{m,\le X}$-torsor $E\!\times_{Y}\!\be X$ over $X$.

Next we write $f^{\le p}\colon\bg_{m,\e S}\to f_{*}\bg_{m,\e X}$ for the morphism of abelian presheaves on $S_{\fl}\e$ such that, for any object $T \to S$ of $S_{\fl}$, $f^{\le p}\lbe(\le T\le)\colon \bg_{m,\e S}(\le T\le)\to(\e f_{*}\bg_{m,\e X})(\le T\le)=\bg_{m,\e S}(X_{T})$ is the canonical map induced by the projection $X_{T}\to T$. Now consider the following abelian presheaf on $S_{\fl}\e$
\[
\mathcal U_{X\be/\lbe S}=\cok\be[\e \bg_{m,\e S}\overset{\!f^{\le p}}{\lra} f_{*}\bg_{m,\e X}\e]
\]
and set
\begin{equation}\label{pu}
\mathcal U_{\le S}\lbe(\lbe X)=\mathcal U_{X\be/S}(S\e)=\cok\be[\e \bg_{m,\e S}(S\e)\to \bg_{m,\e S}(X)\e].
\end{equation}
Thus we obtain an abelian presheaf on $(\textrm{Sch}/S\e)$
\begin{equation}\label{puf}
\mathcal U_{\le S}\colon (\textrm{Sch}/S\e)\to\abe,\, (X\to S\e)\mapsto \mathcal U_{\le S}\lbe(\lbe X).
\end{equation}

\begin{remark} \label{too} If $\mathcal U_{X\be/\lbe S}$ is regarded as an abelian presheaf on $S_{\et}$, then the \'etale sheaf on $S$ associated to $\mathcal U_{X\be/\lbe S}$ is $\uxs$ \eqref{uxs2}.
\end{remark}

If $f$ is schematically dominant, then $f^{\le p}$ is an injective morphism of abelian presheaves on $S_{\fl}$ \cite[proof of Lemma 2.4]{ga4}. Thus, in this case, there exists a canonical exact sequence of abelian presheaves on $S_{\fl}$
\begin{equation}\label{cane}
1\to \bg_{m,\e S}\overset{\!f^{\le p}}{\lra} f_{*}\bg_{m,\e X}\to \mathcal U_{X\be/\lbe S}\to 1.
\end{equation}
Further, by \cite[comments after Lemma 2.5]{ga4}, there exists a canonical exact sequence of abelian groups
\[
0\to\bg_{m,\le S}(S\e)\to \bg_{m,\le S}(X\lbe)\to \uxs(S\e)\to\pic \e S\overset{\!\pic\be f}{\lra} \pic X,
\]
where $\uxs$ is the \'etale sheaf \eqref{uxs2}. The preceding sequence induces an exact sequence of abelian groups
\[
0\to\mathcal U_{\le S}(X)\to \uxs(S\e)\to\pic \e S\overset{\!\pic\be f}{\lra} \pic X.
\]	
Thus, if $\pic\be f$ is injective (e.g., $f$ has a section), then $\mathcal U_{\le S}(X)=\uxs(S\e)$. In particular, if $G$ is an $S$-group scheme, then $\mathcal U_{\le S}(G\e)=\ugs(S\e)$ and the following statement is immediate from Lemma \ref{gros}.

\begin{lemma}\label{pug} Let $S$ be a reduced scheme and let $G$ be a flat $S$-group scheme locally of finite presentation with smooth and connected maximal fibers. Then there exists a canonical isomorphism of abelian groups
\[
\mathcal U_{\le S}(G\e)\isoto G^{\e *}\be(S\e),
\]
where the groups $\mathcal U_{\le S}(G\e)$ and $G^{\e *}\be(S)$ are given by \eqref{pu} and \eqref{gc2}, respectively.
\end{lemma}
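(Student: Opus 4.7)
The plan is to combine two observations stated in the text immediately before the lemma: first, the identification $\mathcal U_{\le S}(X)=\uxs(S\e)$ whenever $\pic\be f$ is injective, and second, the canonical isomorphism $\omega_{\lbe G}\colon \ugs\isoto G^{\e *}$ of \'etale sheaves on $S$ provided by Lemma \ref{gros}. Taking global sections of $\omega_{\lbe G}$ and composing with the first identification should yield the claim.

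First I would argue that the hypothesis ``$G$ is an $S$-group scheme'' forces the structural morphism $f\colon G\to S$ to admit a section, namely the unit section $\varepsilon\colon S\to G$. Since $f\circ\varepsilon=1_{\be S}$, the functoriality of $\pic$ (recalled in \eqref{pmap}) gives $\pic\e\varepsilon\circ\pic\be f=1_{\pic\e S}$, so $\pic\be f\colon \pic \e S\to\pic\e G$ is (split) injective. Together with the structural dominance needed to write down the exact sequence \eqref{cane} (which holds here because $G\to S$ is faithfully flat, hence schematically dominant, by the existence of the unit section and the flatness hypothesis), the displayed five-term exact sequence in the paragraph preceding the lemma collapses to a canonical isomorphism $\mathcal U_{\le S}(G\e)\isoto\ugs(S\e)$.

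Second, the hypotheses on $G$ (reduced base, flat, locally of finite presentation, smooth and connected maximal fibers) are exactly those of Lemma \ref{gros}, so the canonical isomorphism $\omega_{\lbe G}\colon \ugs\isoto G^{\e *}$ holds; evaluating at $S$ yields a canonical isomorphism of abelian groups $\omega_{\lbe G}(S\e)\colon \ugs(S\e)\isoto G^{\e *}\be(S\e)$, the latter group being identified with $\hom_{S\text{-gr}}(G,\bg_{m,\e S}\e)$ via \eqref{gc2}.

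Composing the two canonical isomorphisms above produces the desired $\mathcal U_{\le S}(G\e)\isoto G^{\e *}\be(S\e)$. I do not expect any genuine obstacle here: the content of the statement has essentially already been assembled in the text, and the only point requiring mild care is to verify that the ingredients of Lemma \ref{gros} apply (in particular that $G\to S$ is schematically dominant, which follows from faithful flatness once the unit section is available) and that the canonical origin of both isomorphisms is preserved after taking global sections.
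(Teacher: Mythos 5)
Your proposal is correct and is essentially the paper's own argument: the text preceding the lemma already records that $\mathcal U_{\le S}(G\e)=\ugs(S\e)$ because the unit section makes $\pic$ of the structural morphism injective, and the lemma is then obtained by evaluating the isomorphism $\omega_{\lbe G}$ of Lemma \ref{gros} at $S$. No further comment is needed.
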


Now let $\picxs$ be the (\'etale) {\it relative Picard functor of $X$ over $S$}, i.e., the \'etale sheaf on $S$ associated to the abelian presheaf $(\textrm{Sch}/S\e)\to \mathbf{Ab},(\le T\to S\e)\mapsto\pic X_{T}\e$. Then
\[
\picxs=R^{\le 1}_{\et}\le f_{\lbe *}\bg_{m,X}.
\]
If $g\colon X\to Y$ is a morphism of $S$-schemes and $T$ is an $S$-scheme, then the canonical maps $\pic\lbe g_{\e T}\colon \pic Y_{T}\to \pic X_{T}$ \eqref{pmap} induce a morphism $\pic_{\! S}(\le g)\colon \picys\to\picxs$ in $\ses$. Thus we obtain a contravariant functor
\[
\pic_{\! S}\colon (\textrm{Sch}/S\e)\to\ses, (X\to S\e)\mapsto\picxs.
\]
We will also need to consider the abelian group
\begin{equation}\label{npic}
\npicxs=\cok\be[\e \pic S\overset{\!\pic\be f}{\to}\pic X\e]
\end{equation}
and the associated abelian presheaf on $(\textrm{Sch}/S\e)$
\begin{equation}\label{pnp}
\nps\colon (\textrm{Sch}/S\e)\to\abe, (\e X\to S\e)\mapsto \npicxs.
\end{equation}

\begin{remark} \label{sl} If $S$ is a strictly local scheme and $X\to S$ is quasi-compact and quasi-separated, then $\picxs(S\e)=\pic X$. See \cite[Lemma 6.2.3, p. ~124, and Theorem 6.4.1, p.~128]{t}.
\end{remark}

Next we recall from \cite[\S3]{ga5} the definition of the units-Picard complex of a morphism of schemes $f\colon X\to S$. Let $A^{\bullet}$ be any representative of $\mathbb Rf_{\lbe *}\bg_{m,X}\in D(S_{\et})$ and consider the following composition of morphisms in $\cbs$:
\[
f^{\nat}\colon \bg_{m,\le S}\overset{\,f^{\le\flat}}\to f_{\lbe *}\bg_{m,\le X}\simeq\tau_{\leq\e  0}\e A^{\bullet}\overset{\! i^{1}}{\lra}
\tau_{\leq\e  1}\e A^{\bullet},
\]
where $i^{1}$ is the first map in the distinguished triangle \eqref{n} for $n=1$.
The {\it relative units-Picard complex of $X$ over $S\,$} is the following (well-defined) object of $\dbs$: 
\begin{equation}\label{upic}
\upicxs=C^{\e\bullet}\lbe(\le f^{\nat}\le)[1].
\end{equation}
There exists a distinguished triangle in $\dbs$
\begin{equation}\label{t5}
\picxs[-1]\to\ruxs[1]\to\upicxs\to \picxs,
\end{equation}
where $\ruxs$ is the complex \eqref{ruxs}.

If $f\colon X\to S$ is schematically dominant, then $f^{\le\flat}$ is injective and therefore $\ruxs=\uxs$ in $\dbs$. In this case \eqref{t5} is a distinguished triangle
\begin{equation}\label{t6}
\picxs[-1]\to\uxs[1]\to\upicxs\to\picxs.
\end{equation}
Consequently, $H^{\le r}\lbe(\e\upicxs)=0$ for $r\neq -1, 0$,
\begin{equation}\label{up-1}
H^{-1}\lbe(\e\upicxs)=\uxs
\end{equation}
and
\begin{equation}\label{up-0}
H^{\e 0}\lbe(\e\upicxs)=\picxs.
\end{equation}
If $X\overset{\!g}{\to}Y\overset{\!h}{\to}S$ are morphisms of schemes, then $g^{\e\flat}\colon\bg_{m,\e Y}\to g_{\le*}\bg_{m,\e X}$ induces a morphism  $g^{\e\prime}\colon \tau_{\leq\e  1}{\mathbb R}\e h_{*}\lbe\bg_{m,\e Y}\to \tau_{\leq\e  1}{\mathbb R}\e (h\circ g)_{*}\lbe\bg_{m,\e X}$ in $\dbs$ such that 
$g^{\e\prime}\circ h^{\nat}=(h\circ g)^{\nat}$. Thus $g^{\e\prime}$ induces a morphism
$\upic_{S}(\e g)\colon \upicys\to \upicxs$ in $\dbs\e$ and we obtain a contravariant functor
\[
\upic_{S}\colon ({\rm Sch}\lbe/\be S\e)\to\dbs, (X\to S)\to\upicxs,
\]
such that $H^{-1}\lbe(\e\upic_{S})=H^{\e 0}(\e{\rm RU}_{S}\lbe)={\rm U}_{S}$ by \eqref{t5}. Further, if $h$ and $h\!\circ\! g$ are schematically dominant, then the following diagram in $\dbs$ commutes
\begin{equation}\label{cderr}
\xymatrix{\uys[1]\ar[d]\ar[rr]^{{\rm U}_{\lbe S}(\le g)[1]}&&\uxs[1]\ar[d]\\
\upicys\ar[rr]^{\upic_{S}(\le g)}&&\upicxs,
}
\end{equation}
where the vertical arrows are those in \eqref{t6}.

\medskip

Next let $\brp X=H^{2}(X_{\et},\bg_{m,\le X}\lbe)$ be the cohomological Brauer group of $X\e$. We will write $\brxs$ for the \'etale sheaf on $S$ associated to the presheaf $(\le T\to S\e)\mapsto \brp X_{T}$, i.e., 
\[
\brxs=R^{\le 2}_{\et}\le f_{\lbe *}\bg_{m,\le X}.
\]
There exists a canonical morphism of abelian groups $\brp X\!\to\be\brxs(S\le)$\,\e\footnote{\e This is an instance of the canonical adjoint morphism $P(S\le)\to P^{\#}\be(S\le)$, where $P$ is a presheaf of abelian groups on $(\textrm{Sch}/S\le)$ and $P^{\le\#}$ is its associated \'etale sheaf \cite[Remark, p.~46]{t}.} and we set
\begin{equation}\label{br1}
\bro=\krn\!\be\left[\brp X\be\to\be\brxs(S\le)\e\right]\!.
\end{equation}
Note that, if $f=1_{\lbe S}\colon S\to S$, then $\br_{\be 1}^{\prime}(S/S\e)=\brp S$. We obtain an abelian presheaf on $(\textrm{Sch}/S\e)$
\[
\br_{\be 1,\e S}^{\prime}\colon(\textrm{Sch}/S\e)\to\abe,(\e X\to S\e)\mapsto \bro.
\]
If $h$ is a morphism of $S$-schemes, we will write $\br_{\be 1}^{\prime} h$ for $\br_{\be 1,\e S}^{\prime}(\e h\e)$.

Next, since $f\colon X\to S$ is a morphism of $S$-schemes and $\br_{\be 1}^{\prime}(S/S\e)=\brp S$, we may consider 
\begin{equation}\label{bra}
\bra=\cok\!\be\left[\e \brp\lle S\overset{\!\br_{\!\be 1}^{\prime}\lbe f}{\to}\bro\e\right]
\end{equation}
and the corresponding abelian presheaf on $(\textrm{Sch}/S\e)$
\begin{equation}\label{braf}
\bras\colon (\textrm{Sch}/S\e)\to\abe, (\e X\to S\e)\mapsto \bra.
\end{equation}
If $h$ is a morphism of $S$-schemes, we will write $\br_{\be \rm a}^{\prime}\e h$ for $\br_{\be \rm a,\e S}^{\prime}(\e h\e)$.

The groups  \eqref{br1} and \eqref{bra} are related by an exact sequence
\begin{equation}\label{br-seq}
\brp S\overset{\!\br_{\be 1}^{\prime}\lbe f}{\lra}\bro\overset{\! c_{\le(\lbe X\lbe)}}{\lra}\bra\to 0,
\end{equation}
where $c_{\le(\lbe X)}$ is the canonical projection. 

\smallskip

If $f$ has a section $\sigma\colon S\to X$, then $\br_{\be 1}^{\prime}\sigma\colon\bro\to\br_{\be 1}^{\prime}(S/S\e)=\brp S$ is a retraction of $\br_{\! 1}^{\prime}\lbe f$ that splits \eqref{br-seq}. Thus, if we define
\begin{equation}\label{brs}
\br_{\be \sigma}^{\prime}(X\be/\lbe S\le)=\krn[\e\bro\!\overset{\br_{\! 1}^{\prime}\sigma}{\lra}\brp S\,],
\end{equation}
then
\begin{equation}\label{spe}
\bro=\img \br_{\! 1}^{\prime}f\e\oplus\e \br_{\be \sigma}^{\prime}(X\be/\lbe S\le)
\end{equation}
and the restriction of $c_{\le(\lbe X)}$  \eqref{br-seq} to $\br_{\be \sigma}^{\prime}(X\lbe/\lbe S\le)\subseteq\bro$ is an isomorphism of abelian groups
\begin{equation}\label{csig0}
c_{\le(\lbe X),\e\sigma}\colon \br_{\be \sigma}^{\prime}(X\lbe/\lbe S\le)\isoto \bra.
\end{equation}
Next let $g\colon Y\!\to\be S$ be another morphism of schemes with section $\tau\colon S\to Y$ and let $h\colon X\to Y$ be an $S$-morphism such that $h\e\circ\e\sigma=\tau$. Then the restriction of  $\br_{\! 1}^{\prime}\e h$ to $\img\e \br_{\! 1}^{\prime}\e g \subseteq\br_{\be 1}^{\prime}(\le Y\!/\lbe S\le)$ is an isomorphism of abelian groups
\begin{equation}\label{im}
\img\e \br_{\! 1}^{\prime}\le g\isoto \img\e \br_{\! 1}^{\prime}\le f
\end{equation} 
which fits into a commutative diagram
\[
\xymatrix{\img\e \br_{\! 1}^{\prime}\e g\ar[rr]^(.5){\simeq}&& \img\e \br_{\! 1}^{\prime}\lbe f\\
&\brp S\ar[ul]^{\simeq}\ar[ur]_(.5){\simeq}&.
}
\]
On the other hand, the restriction of $\br_{\! 1}^{\prime}\e h$ to $\br_{\be \tau}^{\prime}(\le Y\be/\lbe S\e)\subseteq \br_{\be 1}^{\prime}(\le Y\be/\lbe S\le)$ induces a map
\begin{equation}\label{csig1}
\br_{\!\sigma,\e\tau}^{\prime}\e h\colon \br_{\be \tau}^{\prime}(\le Y\be/\lbe S\e)\to \br_{\be \sigma}^{\prime}(X\be/\lbe S\e)
\end{equation}
such that (by \eqref{spe} and \eqref{im})
\begin{equation}\label{eker}
\krn\e\br_{\!\sigma,\e\tau}^{\prime}\e h=\krn\e \br_{\! 1}^{\prime}\e h
\end{equation}
and the following diagram commutes
\begin{equation}\label{fty}
\xymatrix{\br_{\be \tau}^{\prime}(\le Y\be/\lbe S\e)\ar[rr]^{\br_{\!\sigma,\e\tau}^{\prime}\le h}\ar[d]_{c_{\le( Y),\e\tau}}^{\simeq}&& \br_{\be \sigma}^{\prime}(\lbe X\be/\lbe S\e)\ar[d]^{c_{\le( X),\e\sigma}}_{\simeq}\\
\bray\ar[rr]^{\br_{\!\rm a}^{\lbe\prime}\e h}&&\bra,
}
\end{equation}
where the vertical maps are the isomorphisms \eqref{csig0}.

\medskip

\smallskip

We now discuss products. If $f\colon X\to S$ and $g\colon Y\to S$ are morphisms of schemes, then the canonical projection morphisms $p_{X}\colon X\times_{S}Y\to X$ and $p_{\e Y}\colon X\!\times_{S}\! Y\to Y$ define a morphism of abelian groups  
\begin{equation}\label{cmap0}
\bg_{m,\le S}(X\le)\be\oplus\be\bg_{m,\le S}(\le Y\le)\to \bg_{m,\le S}(\lbe X\be\times_{\be S}\be Y\le), (u,v)\mapsto p_{\be X}^{(0)}\lbe(u)\be\cdot\be p_{\lle Y}^{(0)}\lbe(v),
\end{equation}
where $p_{\lbe X}^{(0)}\colon \bg_{m,S}(X\e)\to \bg_{m,\e S}(X\!\times_{\be S}\! Y\e)$ is the pullback map and $p_{\le Y}^{(0)}$ is defined similarly. Further, there  exist canonical morphisms of abelian groups
\[
\pic X\be\oplus\be\pic Y\to\pic(X\be\times_{\be S}\be Y\le), ([E],[F]\e)\mapsto (\le\pic p_{X})[E\e]+(\le\pic p_{\e Y})[F\e],
\]
and
\[
\brp X\be\oplus\be\brp\, Y\to\brp(X\be\times_{\be S}\be Y\le), (a,b\e)\mapsto (\brp p_{X})(a)+(\brp p_{\e Y})(b\e).
\]
The preceding maps induce morphisms of abelian groups
\begin{equation}\label{cmap}
\mathcal U_{S}\lbe(\lbe X\le)\be\oplus \mathcal U_{S}\lbe(\le Y\e)\to \mathcal U_{S}\lbe(\lbe X\be\times_{\be S}\be Y\le),
\end{equation}
\begin{equation}\label{c1}
\npic\be(X\be/\be S\le)\oplus\npic\be(Y\be/\be S\le)\to\npic\be(X\!\times_{\be S}\be Y\be/\be S\le),
\end{equation}
\begin{equation}\label{c2}
\bro\oplus\broys\to\broxys
\end{equation}
and
\begin{equation}\label{c3}
\bra\oplus\bray\to\braxy.
\end{equation}
Further, there exists a canonical morphism in $\dbs$ (see \cite[beginning of \S4]{ga5})
\begin{equation}\label{upicm}
\upicxs\be\oplus\be\upicys\to\upicxys.
\end{equation}
Now, if $g\colon Y\!\to\! S$ has a section $\tau\colon S\!\to\! Y$ and $\br_{\be \tau}^{\prime}(Y\be/\lbe S\le)\subseteq \broys$ is the group \eqref{brs} associated to $\tau$, we write 
\begin{equation}\label{c0}
\zeta_{\le X,\le Y}\colon \bro\oplus\br_{\be \tau}^{\prime}(Y\be/\lbe S\le)\to\broxys
\end{equation}
for the restriction of \eqref{c2} to $\bro\oplus\br_{\be \tau}^{\prime}(Y\be/\lbe S\le)\subseteq\bro\oplus\br_{\be 1}^{\prime}(Y\be/\lbe S\le)$. If  $i_{\le\br_{\be 1}^{\prime}(X/S\le)}\colon \bro\to \bro\oplus\br_{\be \tau}^{\prime}(Y\be/\lbe S\le), a\mapsto(a,0),$ is the canonical embedding, then
\begin{equation}\label{c00}
\br_{\be 1}^{\prime}\e p_{X}=\zeta_{\e X,\le Y}\be\circ\be i_{\le\br_{\be 1}^{\prime}(X/S\le)}.
\end{equation}
Now we define 
\begin{equation}\label{c4}
\zeta_{\le X,\le Y}^{\le 0}\colon \bro\oplus\bray\to\broxys
\end{equation}
by the commutativity of the diagram
\[
\xymatrix{\bro\oplus\bray\ar[rr]^(.52){\zeta_{\lle X,\le Y}^{\le 0}}&&\broxys,\\
\ar[u]^{\big(1_{\br_{\be 1}^{\prime}\lbe(\lbe X\be/\lbe S\le)},\e c_{\le(\le Y),\e\tau}\big)}_{\simeq}\bro\oplus\br_{\be \tau}^{\prime}(Y\be/\lbe S\le)\ar[urr]_(.52){\zeta_{\lle X,\le Y}}&&
}
\]
where $c_{\le(\le Y),\e\tau}\colon \br_{\be \tau}^{\prime}(Y\be/\lbe S\le)\isoto \bray$ is the map \eqref{csig0} defined by $\tau$. Clearly, $\zeta_{\le X,\le Y}^{\le 0}$ is an isomorphism if, and only if, $\zeta_{\le X,\le Y}$ is an isomorphism. If this is the case, then \eqref{c00} shows that
\begin{eqnarray}
p_{\e \br_{\be 1}^{\prime}\lbe(X\be/S)}\circ\zeta_{\le X,\le Y}^{\e -1}\circ\br_{\be 1}^{\prime}\e p_{X}&=&1_{\br_{\be 1}^{\prime}(X/S)}\label{toi}\\
p_{\e \br_{\be \tau}^{\prime}\lbe(\le Y\be/S)}\circ\zeta_{\le X,\le Y}^{\e -1}\circ\br_{\be 1}^{\prime}\e p_{X}&=&0,\label{toi2}
\end{eqnarray}
where $p_{\e \br_{\be 1}^{\prime}\lbe(X\be/S)}\colon \bro\oplus\br_{\be \tau}^{\prime}(Y\be/\lbe S\le)\to \bro$ is the canonical projection and $p_{\e \br_{\be \tau}^{\prime}\lbe(\le Y\be/S)}$ is defined similarly. 

\begin{lemma}\label{brai} If the map $\zeta_{\le X,\le Y}^{0}$ \eqref{c4} is an isomorphism, then the map \eqref{c3}
\[
\bra\oplus\bray\to\braxy
\]
is an isomorphism as well.	
\end{lemma}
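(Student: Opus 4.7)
The plan is to exploit the splitting of $\broys$ afforded by the section $\tau$ of $g$, namely \eqref{spe}: $\broys = \img\be(\br_{1}^{\prime} g) \oplus \br_{\tau}^{\prime}(Y\be/\lbe S)$. Under this decomposition, every $b \in \broys$ admits a unique expression $b = \br_{1}^{\prime} g(\beta) + b^{\prime}$ with $\beta = \br_{1}^{\prime}\tau(b) \in \brp S$ and $b^{\prime} \in \br_{\tau}^{\prime}(Y\be/\lbe S)$, and the isomorphism $c_{\le (Y),\e \tau}\colon \br_{\tau}^{\prime}(Y\be/\lbe S) \isoto \bray$ from \eqref{csig0} identifies $\bray$ with $\br_{\tau}^{\prime}(Y\be/\lbe S)$.

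The key computation is that, for such a $b$ and any $a \in \bro$,
\[
(\brp p_{X})(a) + (\brp p_{Y})(b) = \zeta_{\le X,\le Y}\be(a + \br_{1}^{\prime} f(\beta),\, b^{\prime}).
\]
This follows from the identity $(\brp p_{Y}) \circ \br_{1}^{\prime} g = \br_{1}^{\prime}(g \circ p_{Y}) = \br_{1}^{\prime}(f \circ p_{X}) = (\brp p_{X}) \circ \br_{1}^{\prime} f$, which relies on $f \circ p_{X} = g \circ p_{Y}$ and the functoriality of $\br_{1}^{\prime}$. Since the left-hand side is the map \eqref{c2} applied to $(a,b)$, this shows that the images of \eqref{c2} and of $\zeta_{\le X,\le Y}$ in $\broxys$ coincide. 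By hypothesis $\zeta_{\le X,\le Y}^{\le 0}$ is an isomorphism, equivalently $\zeta_{\le X,\le Y}$ is, so \eqref{c2} is surjective. Because \eqref{c3} is induced from \eqref{c2} by the surjective quotient maps $\bro\onto\bra$, $\broys\onto\bray$, $\broxys\onto\braxy$ on source and target, \eqref{c3} is surjective as well.

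For injectivity, I would take $([a],[b]) \in \bra \oplus \bray$ with image $0$ in $\braxy$, lift to $(a,b) \in \bro \oplus \broys$, and decompose $b$ as above. The vanishing hypothesis then says that $\zeta_{\le X,\le Y}\be(a + \br_{1}^{\prime} f(\beta), b^{\prime})$ lies in $\img\be\br_{1}^{\prime}(f \circ p_{X})$; writing this image element as $\br_{1}^{\prime}(f \circ p_{X})(\gamma) = \zeta_{\le X,\le Y}\be(\br_{1}^{\prime} f(\gamma), 0)$ for some $\gamma \in \brp S$ and using injectivity of $\zeta_{\le X,\le Y}$ forces $b^{\prime} = 0$ and $a = \br_{1}^{\prime} f(\gamma - \beta)$. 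Hence $[a] = 0$ in $\bra$, while $b = \br_{1}^{\prime} g(\beta)$ lies in $\img\e \br_{1}^{\prime} g$, giving $[b] = 0$ in $\bray$.

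The only non-formal step is the key displayed identity, which is a direct consequence of the functoriality of $\br_{1}^{\prime}$ and the commutativity of the fiber-product square defining $X\be\times_{\be S}\be Y$; I anticipate no serious obstacle, as the rest of the argument is routine bookkeeping with the direct-sum decomposition provided by $\tau$.
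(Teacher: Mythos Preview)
Your proof is correct. The key identity you establish is precisely what is needed: using $f\circ p_{X}=g\circ p_{Y}$ and functoriality of $\br_{1}^{\prime}$, one has $(\br_{1}^{\prime}p_{Y})(\br_{1}^{\prime}g(\beta))=(\br_{1}^{\prime}p_{X})(\br_{1}^{\prime}f(\beta))$, and the rest of the argument is clean bookkeeping with the splitting \eqref{spe}.

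However, the paper proceeds differently and more economically. Instead of lifting to $\bro\oplus\broys$ and chasing elements, the paper observes that the composition
\[
\brp S\;\overset{(\br_{1}^{\prime}f,\,0)}{\longrightarrow}\;\bro\oplus\bray\;\overset{\zeta_{X,Y}^{0}}{\longrightarrow}\;\broxys
\]
equals $\br_{1}^{\prime}(f\times_{S}g)$. Applying the kernel--cokernel lemma (Lemma~\ref{ker-cok}) to this pair of maps immediately yields an exact sequence
\[
\krn\zeta_{X,Y}^{0}\to\bra\oplus\bray\to\braxy\to\cok\zeta_{X,Y}^{0},
\]
where the middle arrow is \eqref{c3}; the hypothesis kills both ends. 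Your argument unpacks by hand exactly what the kernel--cokernel lemma delivers in one stroke: your key identity encodes the equality $\zeta_{X,Y}^{0}\circ(\br_{1}^{\prime}f,0)=\br_{1}^{\prime}(f\times_{S}g)$, and your surjectivity and injectivity checks are the explicit cokernel and kernel computations. The paper's route is shorter and avoids the lift to $\broys$ altogether, but your approach has the virtue of making the role of the section $\tau$ completely explicit.
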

\begin{proof} The composition 
\[
\brp S\overset{\!(\br_{\be 1}^{\prime}\lbe f\lbe,\le 0)}{\to} \bro\oplus\be\bray\overset{\zeta_{\le X,\le Y}^{\le 0}}{\to}\broxys
\]
equals $(\br_{\be 1}^{\prime}\e p_{\lbe X})\be\circ\be(\br_{\be 1}^{\prime}\lbe f\le)=\br_{\be 1}^{\prime}\lbe(\e f\be\times_{\be S}\be g)$. Thus Lemma \ref{ker-cok} applied to the above pair of maps yields an exact sequence of abelian groups
\[		
\krn\e \zeta_{\le X,\le Y}^{\le 0}\to \bra\oplus\bray\to\braxy\to\cok \zeta_{\le X,\le Y}^{\le 0},
\]
where the middle arrow is the map \eqref{c3}. The lemma is now clear.
\end{proof}

\subsection{Additivity theorems}
\begin{proposition}\label{padd} Let $S$ be a reduced scheme and let $f\colon X\to S$ and $g\colon Y\to S$ be faithfully flat morphisms locally of finite presentation with reduced and connected maximal geometric fibers. Assume that $f$ has an \'etale quasi-section and $g$ has a section or, symmetrically, that $f$ has a section and $g$ has an \'etale quasi-section. Then the canonical map \eqref{cmap}
\[
\mathcal U_{S}\lbe(\lbe X\le)\be\oplus \mathcal U_{S}\lbe(\le Y\e)\to \mathcal U_{S}\lbe(\lbe X\be\times_{\be S}\be Y\le)
\]
is an isomorphism of abelian groups.
\end{proposition}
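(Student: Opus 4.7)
The plan is to first establish the case when both $f$ and $g$ admit sections via an explicit Rosenlicht-type identity, and then to reduce the general case by étale base change along the quasi-section. In the both-sections case, given a section $\sigma$ of $f$ and $\tau$ of $g$ and a lift $\tilde w\in\bg_{m,\e S}(X\!\times_{\be S}\!Y)$ of $[w]\in\mathcal U_{S}(X\!\times_{\be S}\!Y)$, I set
\[
\tilde u=(1_{X},\tau\be\circ\be f)^{*}(\tilde w),\quad \tilde v=(\sigma\be\circ\be g,1_{Y})^{*}(\tilde w),\quad c=(\sigma,\tau)^{*}(\tilde w)\in\bg_{m,\e S}(S)
\]
and aim to prove the Rosenlicht identity $\tilde w\cdot (f\be\circ\be p_{X})^{*}(c)=p_{X}^{*}(\tilde u)\cdot p_{Y}^{*}(\tilde v)$ in $\bg_{m,\e S}(X\!\times_{\be S}\!Y)$. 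Under our hypotheses $X\!\times_{\be S}\!Y$ is reduced, since it is flat over the reduced scheme $S$ and its maximal geometric fibers $X_{\bar s}\times_{k(\bar s)}Y_{\bar s}$ are products over perfect fields of reduced connected schemes, hence reduced. It therefore suffices to verify the identity at each maximal point of $X\!\times_{\be S}\!Y$, which reduces to the classical theorem of Rosenlicht on units of products of reduced connected varieties with rational points over an algebraically closed field. Surjectivity in the both-sections case then follows immediately; injectivity is direct, since if $p_{X}^{*}(\tilde u)\cdot p_{Y}^{*}(\tilde v)\in\bg_{m,\e S}(S)$, applying $(1_{X},\tau\be\circ\be f)^{*}$ together with the injectivity of $f^{*}$ (from faithful flatness) forces $\tilde u\in f^{*}\bg_{m,\e S}(S)$, and symmetrically for $\tilde v$.

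For the general case, by the symmetry in the hypothesis I may assume $g$ has a section $\tau$ and $f$ admits only an étale quasi-section $f'\colon X'\to X$ with $X'\to S$ finite étale surjective of constant degree; the base changes $f_{X'}$ and $g_{X'}$ both admit sections, so the first step applies over $X'$. The descent to $S$ uses two ingredients. First, for any $S$-scheme $Z$ satisfying our hypotheses the base-change map $\mathcal U_{S}(Z)\to\mathcal U_{X'}(Z_{X'})$ is injective: if $u\in\bg_{m,\e S}(Z)$ has pullback to $Z_{X'}$ equal to the image of some $v\in\bg_{m,\e S}(X')$, then comparing the two pullbacks of this relation to $Z\!\times_{\be S}\!X'\!\times_{\be S}\!X'$ and applying fpqc descent for $\bg_{m}$ along $X'\to S$ forces $v\in\bg_{m,\e S}(S)$, whence $u\in f^{*}\bg_{m,\e S}(S)$. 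Together with the isomorphism over $X'$, this yields injectivity of the map in the proposition. Second, for surjectivity I set $\tilde u=(1_{X},\tau\be\circ\be f)^{*}(\tilde w)\in\bg_{m,\e S}(X)$, which is defined globally, and let $\tilde b\in\bg_{m,\e S}(Y_{X'})$ be the $Y$-factor of the first-step decomposition of $\tilde w_{X'}$ over $X'$. To verify that $[\tilde b]\in\uys(X')=\mathcal U_{X'}(Y_{X'})$ satisfies the descent condition for $X'\to S$, I base-change further to $X'\!\times_{\be S}\! X'$, which equips $f$ with two canonical sections pulled back via the two projections $p_{1},p_{2}\colon X'\!\times_{\be S}\! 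X'\rightrightarrows X'$; comparing the two resulting Rosenlicht decompositions of $\tilde w_{X'\!\times_{\be S}\! X'}$ and invoking the already-established injectivity on their ratio forces $p_{1}^{*}[\tilde b]=p_{2}^{*}[\tilde b]$ in $\uys(X'\!\times_{\be S}\! X')$. Since $g$ has a section, $\uys(S)=\mathcal U_{S}(Y)$, so $[\tilde b]$ descends to some $[\tilde v]\in\mathcal U_{S}(Y)$, and the injectivity of $\mathcal U_{S}(X\!\times_{\be S}\!Y)\to\mathcal U_{X'}((X\!\times_{\be S}\!Y)_{X'})$ established above forces $[w]=\varphi([\tilde u],[\tilde v])$.

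The main obstacle will be the Rosenlicht identity in the first step: packaging the classical fiberwise decomposition into a clean global equality of units requires exactly the fibral reducedness and connectedness hypotheses, and the reduction to maximal points relies on $X\!\times_{\be S}\!Y$ being reduced. The descent in the second step is the other nontrivial point, since the factor $\tilde b$ depends visibly on the choice of quasi-section, and its descent property is recovered only by comparing two parallel Rosenlicht decompositions over $X'\!\times_{\be S}\! X'$ and invoking the injectivity from the first step.
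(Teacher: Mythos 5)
Your argument is correct in its overall architecture, but it takes a genuinely different route from the paper. The paper's proof is a short kernel--cokernel argument: it maps the exact sequence $1\to \bg_{m,\e S}(S)^{\oplus 2}\to \bg_{m,\e S}(X)\oplus\bg_{m,\e S}(Y)\to \mathcal U_{\le S}(X)\oplus\mathcal U_{\le S}(Y)\to 1$ to the analogous sequence for $X\times_S Y$, cites \cite[Corollary 4.5]{ga4} for the fact that the middle vertical arrow has kernel exactly the image of the antidiagonal copy of $\bg_{m,\e S}(S)$ and cokernel $\krn\pic f\cap\krn\pic g$, and then observes that the section hypothesis kills this cokernel. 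You instead rebuild the relative unit theorem by hand: the both-sections case via a normalized Rosenlicht identity verified at the maximal points of $X\times_S Y$ (your reduction is legitimate: maximal points of the flat scheme $X\times_S Y$ lie over maximal points of $S$, their local rings are fields equal to the stalks of the maximal fibers, and reducedness lets you propagate the pointwise identity globally), and the general case by \'etale descent along the quasi-section, with the cocycle condition for the $Y$-factor extracted from the uniqueness of the decomposition over $X'\times_S X'$. I checked the descent step and it is sound; in particular your use of $\uys(S)=\mathcal U_{\le S}(Y)$ and $\uys(X')=\mathcal U_{X'}(Y_{X'})$ is justified by the section of $g$.

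Two caveats. First, the fiberwise input you label ``the classical theorem of Rosenlicht'' is the unit theorem for products of \emph{reduced connected} (not necessarily irreducible) schemes locally of finite type over an algebraically closed field with rational points; Rosenlicht's theorem is stated for irreducible varieties, and the extension to the reduced connected case is itself one of the main technical points of the reference \cite{ga4} that the paper's proof invokes. So you have not removed the external dependency so much as relocated it to the geometric fibers; if you intend the proof to be self-contained you must supply that extension. Second, your assumption that the \'etale quasi-section $X'\to S$ is finite of constant degree is neither granted by the hypotheses nor needed: \'etale and surjective suffices for every descent you perform. Finally, note that the paper's route yields, essentially for free, the finer exact sequence $1\to \mathcal U_{\le S}(X)\oplus\mathcal U_{\le S}(Y)\to\mathcal U_{\le S}(X\times_S Y)\to\krn\pic f\cap\krn\pic g\to 1$ before the section hypothesis is used, which your construction does not directly produce.
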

\begin{proof} Since $f, g$ and $f\!\times_{\lbe S}\be g$ are schematically dominant, the following diagram of abelian groups, whose exact rows are induced by \eqref{cane}, commutes:	
\[
\xymatrix{1\ar[r]&\bg_{m,\le S}\le(S\le)\be\oplus\be \bg_{m,\le S}\le(S\le)\,\ar[r]^(.5){\iota}\ar@{->>}[d]^{(\e\cdot\e)}& \bg_{m,\le S}(X\le)\oplus\bg_{m,\le S}(\le Y\le) \ar[r]\ar[d]& \mathcal U_{S}\lbe(\lbe X\le)\be\oplus \mathcal U_{S}\lbe(\le Y\e)\ar[d]\ar[r]&1\\
1\ar[r]&\bg_{m,\le S}\le(S\le)\,\ar[r]& \bg_{m,\e S}(X\!\times_{\be S}\! Y\le)\ar[r]& \mathcal U_{S}\lbe(\lbe X\be\times_{\be S}\be Y\le)\ar[r]&1.
}
\]
The left-hand vertical map in the above diagram is the (surjective) multiplication homomorphism. The middle and right-hand vertical arrows are the maps \eqref{cmap0} and \eqref{cmap}, respectively. Now, since $f\lbe\times_{\lbe S}\lbe g$ has an \'etale quasi-section, the map labeled $\iota$ in the above diagram induces an isomorphism between the kernels of the first two vertical arrows. See \cite[Corollary 4.5 and its proof]{ga4}. Further, the cokernel of the middle vertical map is canonically isomorphic to $\krn\pic\be f\le\cap\le \krn\pic\lbe g\subseteq\pic S$. Thus the diagram yields an exact sequence of abelian groups
\[
1\to \mathcal U_{S}\lbe(\lbe X\le)\be\oplus \mathcal U_{S}\lbe(\le Y\e)\to \mathcal U_{S}\lbe(\lbe X\be\times_{\be S}\be Y\le)\to \krn\pic\be f\le\cap\le \krn\pic\lbe g\to 1.
\]
Since either $\krn\pic\be f$ or $\krn\pic\lbe g$ is zero by hypothesis, the proposition follows.
\end{proof}

The next statement alludes to the separable indices $I(X_{\eta}),I(Y_{\eta})$ and the \'etale index $I(\e f\!\times_{S}\!g)$ defined at the end of Subsection \ref{gen}.

\begin{proposition}\label{upadd} Let $S$ be a locally noetherian normal scheme and let $f\colon X\to S$ and $g\colon Y\to S$ be faithfully flat morphisms locally of finite type. Assume that the following conditions hold:
\begin{enumerate}
\item[(i)] $f\be\times_{\be S}\be g$ has an \'etale quasi-section,
\item[(ii)] for every \'etale and surjective morphism $T\to S$, $X_{T}$, $Y_{T}$ and $X_{T}\be\times_{T}\be Y_{T}$ are locally factorial,
\item[(iii)] for every point $s\in S$ of codimension $\leq 1$, the fibers $X_{\lbe s}$ and $Y_{\lbe s}$ are geometrically integral, and
\item[(iv)] for every maximal point $\eta$ of $S$, ${\rm gcd}\le(\le I(X_{\eta}\le),I(\le Y_{\eta}\le))=1$ and
\begin{equation}\label{kid}
\pic\be(X_{\lbe\eta}^{\le\rm s}\be\times_{k(\eta)^{\rm s}}\be Y_{\!\eta}^{\e\rm s}\le)^{\g(\eta)}=(\e\pic X_{\lbe\eta}^{\le\rm s})^{\g(\eta)}\be\oplus\be(\e\pic Y_{\!\eta}^{\e\rm s})^{\g(\eta)},
\end{equation}
where $\g(\eta)={\rm Gal}(k(\eta)^{\rm s}\be/k(\eta))$.
\end{enumerate}
Then
\begin{enumerate}
\item[(a)] the canonical map \eqref{c1}
\[
\npic\be(X\be/\be S\le)\oplus\npic\be(Y\be/\be S\le)\to\npic\be(X\!\times_{\be S}\be Y\be/\be S\le)
\]
is an isomorphism of abelian groups,
\item[(b)] the canonical morphism \eqref{upicm}
\[
\upicxs\oplus\upicys\to\upicxys
\]
is an isomorphism in $\dbs$, and
\item[(c)] if, in addition, either
\begin{enumerate}
\item[(v)] $H^{\le 3}\lbe(S_{\et},\bg_{m,S})=0$, or
\item[($\text{v}^{\e\prime}\e$)] the \'etale index $I(\le f\!\times_{\be S}\be g\le)$ is defined and is equal to $1$, or
\item[($\text{v}^{\e\prime\prime}\e$)] $g$ has a section,
\end{enumerate}
then the canonical map \eqref{c3}
\[
\bra\be\oplus\be\bray\to\braxy
\]
is an isomorphism of abelian groups.
\end{enumerate}
\end{proposition}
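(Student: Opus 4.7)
The plan is to establish (a) directly from the K\"unneth identity (iv) by descent from geometric generic fibers, deduce (b) by comparing the distinguished triangles \eqref{t6} for $X/S$, $Y/S$ and $X\be\times_{\lbe S}\be Y/S$, and finally obtain (c) by verifying the hypothesis of Lemma \ref{brai} in each of the three sub-cases.

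For part (a), I would first use hypothesis (i) to pass to an \'etale cover of $S$ on which $f\be\times_{\lbe S}\be g$ (and hence both $f$ and $g$) has a section. The local factoriality of (ii) then permits the Picard groups of $X$, $Y$ and $X\be\times_{\lbe S}\be Y$ to be computed by Weil divisors, while the codimension-one geometric integrality of (iii) ensures that the resulting divisor groups behave correctly relative to base change to the maximal points. The problem then localises, component by component over each maximal point $\eta$ of $S$, to the K\"unneth decomposition \eqref{kid}; the coprime-index condition in (iv) supplies the transfer maps that descend Galois-invariant Picard classes from $k(\eta)^{\rm s}$ down to $k(\eta)$.

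For part (b), I apply the triangle \eqref{t6} to each of $X/S$, $Y/S$ and $X\be\times_{\lbe S}\be Y/S$ and compare by the triangulated five-lemma. The additivity of $\uxs[1]$ is reduced to Proposition \ref{padd} by checking at stalks: over a strictly henselian $T\to S$, the \'etale quasi-section becomes an actual section, whence sections of both $f_T$ and $g_T$ exist and Proposition \ref{padd} applies. The additivity of $\picxs$ is checked identically, using Remark \ref{sl} to write $\picxs(T)=\pic X_T=\npic(X_T/T)$ on strict henselisations $T=\spec\mathcal{O}_{\lbe S,\bar s}^{\,\rm sh}$ (since $\pic T=0$) and invoking (a) over $T$, whose hypotheses pass to strict henselisations. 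For part (c), Lemma \ref{brai} reduces matters to showing that $\zeta_{\le X,\le Y}^{\le 0}$ is an isomorphism. Case (v$^{\e\prime\prime}$) is immediate: a section of $g$ yields the splitting \eqref{spe}, and the Leray spectral sequences for $f$, $g$ and $f\be\times_{\lbe S}\be g$, combined with (b), give the desired decomposition of $\bro$-groups. Case (v) runs the same spectral-sequence argument without a section, using $H^{\le 3}(S_\et,\bg_{m,S})=0$ to kill the edge obstructions. Case (v$^{\e\prime}$) is handled by a trace argument: \'etale quasi-sections of $f\be\times_{\lbe S}\be g$ whose degrees have gcd $1$ split the Brauer-group sequence after a B\'ezout combination of their transfers.

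The main obstacle will be the Brauer-group argument in part (c), most notably in case (v): exhibiting the isomorphism requires simultaneous control of the Leray spectral sequences for $X$, $Y$ and $X\be\times_{\lbe S}\be Y$ together with careful bookkeeping of the compatibilities \eqref{c00} and \eqref{toi}--\eqref{toi2}. Part (a), while conceptually clear once (iv) is granted, also demands a delicate fiber-by-fiber diagram chase to assemble the Galois-descent data coherently.
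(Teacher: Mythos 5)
Your outline coincides with the paper only in part (c), case ($\text{v}''$): there the paper does essentially what you describe, verifying via Lemma \ref{brai} that $\zeta^{\e 0}_{X,Y}$ is an isomorphism by assembling the sequences \eqref{need1}--\eqref{need2} for $X$, $Y$ and $X\times_S Y$ into a five-column diagram, feeding in the section-induced splitting \eqref{need3} and part (b) for the vertical isomorphisms, and applying the five lemma. For parts (a) and (b), and for the reduction of cases (v) and ($\text{v}'$) to ($\text{v}''$), the paper proves nothing here: it cites its companion paper (Propositions 2.7 and 4.4 and Corollary 4.5 of [GA3]). Your sketches of (a) and (b) point in the right direction but leave the actual work undone: in (a), relating $\npic(X/S)$ to the Picard groups of the maximal fibers (via local factoriality and Weil divisors) and the transfer/descent from $k(\eta)^{\rm s}$ to $k(\eta)$ \emph{are} the cited result; in (b), your stalkwise reduction still requires checking that hypotheses (ii) and (iv) survive passage to strict henselizations (which are limits of \'etale neighbourhoods, not \'etale covers, so (ii) does not apply verbatim) and that Remark \ref{sl} applies in the merely locally-of-finite-type setting.

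The step that fails as written is the uniform framing of part (c). The maps $\zeta_{X,Y}$ \eqref{c0} and $\zeta^{\e 0}_{X,Y}$ \eqref{c4} are defined only in the presence of a section $\tau$ of $g$, since they are built from the subgroup ${\rm Br}^{\e\prime}_{\tau}(Y/S)$ and the splitting \eqref{spe}--\eqref{csig0}; hence ``Lemma \ref{brai} reduces matters to showing that $\zeta^{\e 0}_{X,Y}$ is an isomorphism'' is meaningless under (v) or ($\text{v}'$) alone, and the splitting \eqref{need3} that drives the five-column diagram is likewise unavailable. The hypothesis $H^{\e 3}(S_{\et},\bg_{m,S})=0$ does not by itself restore a decomposition of $(f\times_S g)_*\bg_{m}$; one must instead compare the analogues of \eqref{need1} for $\br_{\rm a}^{\e\prime}$ directly, and under ($\text{v}'$) one must base-change to the source of a quasi-section, apply the section case there, and descend by a restriction--corestriction argument whose compatibility with \eqref{c3} has to be checked. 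These are exactly the reductions the paper imports from [GA3, Corollary 4.5], so your cases (v) and ($\text{v}'$) need to be rebuilt on a different footing rather than patched.
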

\begin{proof} (Cf. \cite[proof of Lemma 6.6]{san}) For (a) and (b), see \cite[Propositions 2.7 and 4.4, respectively]{ga5}. By \cite[Corollary 4.5]{ga5}, is suffices to check that \eqref{c3} is an isomorphism of abelian groups when ($\text{v}^{\e\prime\prime}\e$) holds, i.e., $g$ has a section $\tau\colon S\to Y$. In this case we will show that the map $\zeta_{\le X,\le Y}^{\le 0}$ \eqref{c4} is an isomorphism, which will show that\eqref{c3} is an isomorphism by Lemma \ref{brai}.

By \cite[Proposition 3.6(iv)]{ga5}, there exists a canonical exact sequence of abelian groups
\begin{equation}\label{need1}
\picys(S\e)\to H^{2}(S_{\et},\uys)\to \bray\to H^{1}(S_{\et},\picys)\to H^{3}(S_{\et},\uys).
\end{equation}
Further, if $(h,Z)=(f,X)$ or $(f\times_{S}g,X\times_{S}Y\e)$, then the Cartan-Leray spectral sequence associated to $h\colon Z\to S$
\[
H^{\le r}\be(S_{\et}, R^{\e s} h_{\lbe *}\bg_{m,Z})\Rightarrow H^{\le r+s}\lbe(Z_{\et}, \bg_{m,Z})
\]
induces an exact sequence of abelian groups \cite[p.~309, line 8]{mi1}
\begin{equation}\label{need2}
{\rm Pic}_{Z/S}(S\le)\to
H^{\le 2}\lbe(S_{\et},h_{ *}\bg_{m,Z})\to\br_{\!1}^{\prime}(Z/S\e)\to H^{\le 1}\lbe(S_{\et},{\rm Pic}_{Z/S})\to H^{\le 3}\lbe(S_{\et},h_{*}\bg_{m,Z}).
\end{equation}
Next, by \cite[Corollary 4.4]{ga4}, the given section $\tau$ of $g$ induces an isomorphism of \'etale sheaves on $S\e$
\begin{equation}\label{need3}
f_{\lle *}\le\bg_{m,\le X}\oplus\uys\overset{\!\sim}{\to}
(f\times_{\be S} g)_{*}\bg_{m,\e X\times_{\be S} Y}.
\end{equation}
We now use the sequences \eqref{need1} and \eqref{need2} to form the following $5$-column diagram of abelian groups with exact rows
\[
\xymatrix{\picxs(S\e)\oplus \picys(S\e)\ar[d]_{\alpha}^{\simeq}\ar[r]& H^{\le 2}\lbe(S_{\et},f_{\lbe *}\bg_{m,X})\oplus H^{\le 2}\lbe(S_{\et},\uys)\ar[r]\ar[d]_{\beta}^{\simeq}&\bro\oplus\be\bray\ar[d]^{\zeta_{\lle X,\le Y}^{\le 0}}\\
\picxys(S\e)\ar[r]&H^{\le 2}\lbe(S_{\et},(f\!\times_{\be S}\! g)_{*}\bg_{m,\e X\times_{\be S} Y})\ar[r]& \broxys
}
\]	
\[
\xymatrix{\ar[r]& H^{\le 1}\lbe(S_{\et},\picxs)\oplus H^{\le 1}\lbe(S_{\et},\picys)\ar[d]_{\gamma}^{\simeq}\ar[r]& H^{\le 3}\lbe(S_{\et},f_{\lbe *}\bg_{m,X})\oplus H^{\le 3}\lbe(S_{\et},\uys)\ar[d]_{\delta}^{\simeq}\\
\ar[r]&	H^{\le 1}\lbe(S_{\et},\picxys)\ar[r]& H^{\le 3}\lbe(S_{\et},(f\!\times_{\be S}\! g)_{*}\bg_{m,\e X\times_{\be S} Y}).
}
\]
The maps $\alpha$ and $\gamma$ are isomorphisms since, by (b), $\picxs\oplus\picys\to\picxys$, i.e., $H^{\le 0}\lbe(\e\upicxs\le\oplus\le\upicys)\to H^{\le 0}\lbe(\le\upicxys)$, is an isomorphism of \'etale sheaves on $S$. The maps $\beta$ and $\delta$ are induced by the isomorphism of \'etale sheaves \eqref{need3}. It follows from the definition of $\zeta_{\le X,\le Y}^{\le 0}$ \eqref{c4} and the proof of \cite[Corollary 4.4]{ga4} that the above diagram commutes. Thus the five lemma applied to the diagram shows that $\zeta_{\le X,\le Y}^{\le 0}$ is an isomorphism, as claimed.
\end{proof}

\begin{remark}\label{comm}\indent
\begin{enumerate}
\item[(a)] The proof shows that if hypotheses (i)-(iv) and ($\text{v}^{\e\prime\prime}\e$) of the proposition hold, then the fact that 
$\bra\be\oplus\be\bray\to\braxy$ \eqref{c3} is an isomorphism is a consequence of the fact that $\zeta_{\le X,\le Y}^{\le 0}$ \eqref{c4} (or, equivalently, $\zeta_{\le X,\le Y}$ \eqref{c0}) is an isomorphism.
\item[(b)] If $S$ is locally noetherian and regular and $f\colon X\to S$ and $g\colon Y\to S$ are smooth and surjective, then hypothesis (ii) of the proposition holds. See \cite[Remark 4.10(a)]{ga5}. Further, since in this case $f\!\times_{S}\! g$ is also smooth and surjective, hypothesis (i) also holds by \cite[${\rm IV}_{4}$, Corollary 17.16.3(ii)]{ega}. The identity (or, more precisely, canonical isomorphism) \eqref{kid} holds in many cases of interest. See \cite[Examples 5.9 and 5.12]{ga4}. Finally, hypothesis (v) holds in the following cases (see \cite[Remark II.2.2(a), p.~165]{adt}):
\begin{itemize}
\item[(1)] $S$ is the spectrum of a global field.
\item[(2)] $S$ is a proper nonempty open subscheme of the spectrum of the ring of integers of a number field.
\item[(3)] $S$ is a nonempty open affine subscheme of a smooth, complete and irreducible curve over a finite field.	
\end{itemize}
\end{enumerate}
\end{remark}

\smallskip

Recall that, if $k^{\e\prime}\be/k$ is a field extension, a geometrically integral $k$-scheme $X$ is called {\it $k^{\e\prime}$-rational} if the function field of $X\be\times_{k}\be\spec k^{\e\prime}$ is a purely transcendental extension of $k^{\e\prime}$.

\begin{corollary}\label{xg} Let $S$ be a locally noetherian normal scheme and let $X$ and $G$ be faithfully flat $S$-schemes locally of finite type, where $G$ is a group. Let $F\colon ({\rm Sch}/\lbe S\e)\to\abe$ denote either $\npic_{\!S}$ \eqref{pnp} or $\bras$ \eqref{braf}. Assume that
\begin{enumerate}
\item[(i)] the structural morphism $X\to S$ has an \'etale quasi-section,
\item[(ii)] for every integer $i\geq 1$ and every \'etale and surjective morphism $T\to S$, $X_{T}$, $G^{\e i}_{\lbe T}$ and $X_{T}\be\times_{T}\be G^{\e i}_{T}$ are locally factorial, 
\item[(iii)] for every point $s\in S$ of codimension $\leq 1$, the fibers $X_{\lbe s}$ and $G_{\lbe s}$ are geometrically integral, and
\item[(iv)] for every maximal point $\eta$ of $S$, $G_{\lbe\eta}$ is  $k(\eta)^{\rm s}$-rational.
\end{enumerate}
Then the maps $F(\le G^{\e i}\le)\oplus F(\le G\e)\to F(\le G^{\e i+1}\le)$ and $F(\lbe X\le)\oplus F(\le G^{\e i}\le)\to F(\lbe X\be\times_{\be S} G^{\e i}\e)$ \eqref{c1}, \eqref{c3} are isomorphisms of abelian groups for every $i\geq 1$.
\end{corollary}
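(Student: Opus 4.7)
The plan is to apply Proposition \ref{upadd} directly to the two pairs of $S$-schemes $(\le G^{\e i},G\le)$ and $(\le X\le,G^{\e i}\le)$ for each $i\geq 1$, and then to invoke parts (a) and (c) of that proposition to obtain the conclusion for $F=\npic_{\lbe S}$ and $F=\bras$ respectively. So the whole proof reduces to checking that hypotheses (i)--(iv) of Proposition \ref{upadd}, together with at least one of (v), ($\mr v^{\e\prime}\e$) or ($\mr v^{\e\prime\prime}\e$), hold for each of these two pairs. Since $G$ is an $S$-group scheme, the unit section furnishes a section $\varepsilon_{\lle G^{\e i}}\colon S\to G^{\e i}$ for every $i\geq 1$, so in both cases hypothesis ($\mr v^{\e\prime\prime}\e$) holds for the second factor.

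For the pair $(G^{\e i},G\le)$, hypothesis (i) holds because $G^{\e i+1}\to S$ has a section (hence an \'etale quasi-section); hypothesis (ii) is immediate from hypothesis (ii) of the corollary applied to $i, i+1$; and hypothesis (iii) follows from hypothesis (iii) of the corollary together with the fact that a product of geometrically integral $k(s)$-schemes is geometrically integral. For hypothesis (iv) over a maximal point $\eta$, the sections $\varepsilon_{G^{\e i}_{\eta}}$ and $\varepsilon_{G_{\eta}}$ show that $I(G^{\e i}_{\eta})=I(G_{\eta})=1$, so their gcd is $1$. Moreover $G^{\e i}_{\eta}$ is $k(\eta)^{\rm s}$-rational because a product of $k(\eta)^{\rm s}$-rational varieties has a purely transcendental function field over $k(\eta)^{\rm s}$; hence the Picard identity \eqref{kid} holds by the cases discussed in Remark \ref{comm}(b) (cf. \cite[Examples 5.9 and 5.12]{ga4}).

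For the pair $(X,G^{\e i}\le)$, hypothesis (i) follows because, given an \'etale covering $T\to S$ together with an $S$-morphism $T\to X$ provided by hypothesis (i) of the corollary, the composition $T\to X\to X\be\times_{\be S}\be G^{\e i}$, where the second arrow is $(1_{\lbe X},\varepsilon_{G^{\e i}}\be\circ\be(X\to S))$, is an \'etale quasi-section of $X\be\times_{\be S}\be G^{\e i}\be\to S$. Hypotheses (ii) and (iii) are direct translations of (ii) and (iii) of the corollary, and hypothesis (iv) holds because $I(G^{\e i}_{\eta})=1$ (so the gcd with $I(X_{\eta})$, whenever the latter is defined, equals $1$), while the Picard identity \eqref{kid} for $X_{\eta}^{\rm s}\be\times\be G^{\e i,\e\rm s}_{\eta}$ holds again because $G^{\e i}_{\eta}$ is $k(\eta)^{\rm s}$-rational.

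The main point where one must be careful is the Picard identity \eqref{kid}: everything else is either formal or a straightforward consequence of the hypotheses and the existence of sections on $G^{\e i}$, whereas \eqref{kid} genuinely requires a nontrivial input, supplied here by the $k(\eta)^{\rm s}$-rationality of $G_{\eta}$ (hypothesis (iv) of the corollary), which propagates to $G^{\e i}_{\eta}$. Once \eqref{kid} is in place, parts (a) and (c) of Proposition \ref{upadd} apply to each of the two pairs and yield the desired isomorphisms for both choices of $F$.
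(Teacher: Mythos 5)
Your proposal is correct and follows essentially the same route as the paper: apply Proposition \ref{upadd} with hypothesis ($\text{v}^{\e\prime\prime}$) to the pairs $(X,G^{\e i})$ and $(G^{\e i},G)$, the only substantive check being the Picard identity \eqref{kid}, which is supplied by the $k(\eta)^{\rm s}$-rationality of $G^{\e i}_{\eta}$ (the paper cites \cite[Example 5.9(a)]{ga4} for exactly this, after noting that (ii) and (iii) make $X_{\eta}$ normal, geometrically integral and locally of finite type so that the cited example and the separable index apply).
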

\begin{proof} By (ii) and (iii), $X_{\eta}$ is normal, geometrically integral and locally of finite type over $k(\eta)$. Further, by (iv), $G_{\lbe\eta}^{\e i}$ is $k(\eta)^{\rm s}$-rational for every integer $i\geq 1$. Thus, by \cite[Example 5.9(a)]{ga4}, $\pic\be(\e\xs_{\lbe\eta}\!\times_{k(\eta)^{\rm s}}\!(G_{\lbe\eta}^{\e i})^{\rm s}\le)=\pic\xs_{\lbe\eta}\be\oplus\be \pic((G_{\lbe\eta}^{\e i})^{\rm s})$, i.e., \eqref{kid} holds. Similarly, $\pic\be(\e(G_{\lbe\eta}^{\e i})^{\rm s}\be\times_{k(\eta)^{\rm s}}\be G_{\lbe\eta}^{\e\rm s}\le)=\pic((G_{\lbe\eta}^{\e i})^{\rm s})\be\oplus\be \pic(G_{\lbe\eta}^{\e\rm s})$. Therefore hypotheses (i)-(iv) and ($\text{v}^{\e\prime\prime}\e$) of the proposition hold for $X,G^{\e i}$ and $G^{\e i},G$. The corollary for $F=\npic_{\!S}$ (respectively, $F=\bras$) now follows from part (a) (respectively, (c)) of the proposition.
\end{proof}

\section{The units-Picard-Brauer sequence of a torsor}

Let $S$ be a scheme and let $G$ and $Y$ be flat $S$-schemes locally of finite presentation, where $G$ is a group. A basic problem is to compute the Picard group of a $G_{\le Y}$-torsor $X$ over $Y$ in terms of the Picard groups of $Y$ and $G$. When $S=\spec k$, where $k$ is a field, this problem was discussed by Sansuc in \cite[pp.~43-45]{san}, who used a simplicial method to obtain a units-Picard-Brauer exact sequence that relates the groups mentioned above. In this Section we generalize Sansuc's method to deduce, under appropriate conditions, a similar exact sequence over any locally noetherian normal scheme $S$.

\smallskip

\subsection{A simplicial lemma} Let $\s C$ be a full subcategory of $S_{\fl}$ which is stable under products and contains $1_{\lbe S}$. Further, let $F\colon \s C\to \abe$ be an abelian presheaf on $\s C\e$ such that $F(1_{\lbe S})=0$. If $X\to S$ is an object of $\s C$ (respectively, if $\xi\colon X\to Y$ is a morphism in $\s C\e$), we will write $F(X)$ (respectively, $F(\xi)$) for the abelian group $F(\le X\to S\le)$ (respectively, morphism of abelian groups $F(\le Y\le)\to F(X)$). The canonical $\s C$-morphisms  $p_{X}\colon X\times_{S}Y\to X$ and $p_{\e Y}\colon X\times_{S} Y\to Y$ define a morphism in $\abe$
\begin{equation}\label{fxy}
\psi_{X,\e Y}\colon F(X)\oplus F(Y\le)\to F(X\be\times_{S}\be Y\le), (a,b)\mapsto F(\e p_{\lbe X})(a)+F(\e p_{\e Y})(b).
\end{equation}
If $Z\to S$ is a third object of $\s C$ and we identify $(X\times_{S}Y\e)\times_{S} Z$ and $X\times_{S}(\e Y\times_{S}Z\e)$ (and write $X\times_{S}Y\times_{S}Z\e$ for the latter), then the following diagram commutes:
\begin{equation}\label{big}
\xymatrix{F(X)\oplus F(\le Y\le)\oplus F(Z\e)\ar[d]_{\psi_{X,Y}\oplus 1_{F(Z\le)}}
\ar[rr]^(.5){1_{F\lbe(\lbe X)}\le\oplus\e \psi_{\e Y,Z}}&& F(X\e)\oplus F(\e Y\be\times_{S}\be Z\e)\ar[d]^{\psi_{X,Y\lbe\times_{\be S}\lbe Z}}\\
F(X\be\times_{S}\be Y\e)\oplus F(Z\le)\ar[rr]^(.5){\psi_{X\lbe\times_{\be S}\lbe Y,Z}}&& F(X\be\times_{S}\be Y\be\times_{S}\be Z\e).
}
\end{equation}

Let $\pi\colon G\to S$ be a group object of $\s C$ with unit section $\varepsilon\colon S\to G$. We will need the following fact: since $F(S\e)=0$, both $F(\pi)$ and $F(\varepsilon)$ are the zero morphism of $\abe$. Now let $h\colon Y\to S$ be an object of $\s C$, write $\s C_{\be/Y}$ for the category of $Y$-objects of $\s C$ and let $\xi\colon X\to Y$ be a faithfully flat morphism locally of finite presentation. Thus $\xi$ is an object of $\s C_{\be/Y}$. Further, locally for the fppf topology on $Y$, $\xi$ has a section \cite[${\rm IV}_{4}$, Corollary 17.16.2]{ega}.

Assume now that the $Y$-scheme $X$ is equipped with a right action of $G_{Y}$ over $Y$
\begin{equation}\label{vsig}
\varsigma^{\le\prime}\colon X\be \times_{\lbe Y}\be G_{Y}\to X
\end{equation}
such that $X$ is a right (fppf\e) $G_{Y}$-torsor over $Y$, i.e., the $Y$-morphism
\[
(\e p_{\lbe X},\varsigma^{\le\prime}\e)_{\lbe Y}\colon X \times_{Y} (G\times_{S}Y\e)\to X\be \times_{Y}\be X, (x,(\e g,\xi(x)))\mapsto (x,xg),
\]
is an isomorphism. We will write 
\begin{equation}\label{smor}
\varsigma\colon X\be \times_{\lbe S}\le G\to X, (x,g)\mapsto xg,
\end{equation}
for the composite $S$-morphism
\[
X\times_{\lbe S} G\isoto X\!\times_{\lbe Y}\be G_{Y}\overset{\!\varsigma^{\lle\prime}}{\to}X,
\]
where the first map is the canonical $S$-isomorphism of \cite[Corollary 1.3.4, p.~32]{ega1} and the second map is the $Y$-morphism \eqref{vsig} regarded as an $S$-morphism. Note that the morphism of $S$-schemes
\begin{equation}\label{no}
\beta_{\le 1}=(\e p_{\lbe X},\varsigma\le)_{\lbe S}\colon X\times_{\lbe S} G\to X\!\times_{Y}\! X, (x,g)\mapsto (x,xg),
\end{equation}
is an isomorphism. Further, the induced $S$-morphism
\begin{equation}\label{si}
\vartheta=p_{\le G}\circ \beta_{\e 1}^{\e -1}\colon X\times_{Y}X\to G
\end{equation}
is defined as follows: for every $S$-scheme $T$, $\vartheta(\le T\le)$ maps $(x_{0},x_{1})\in X(\le T\le)\be\times_{Y\lbe(\le T\le)}\be X(\le T\le)$ to the unique element $g\in G(\le T\le)$ such that $x_{1}=x_{0}\e g$.

\begin{example} \label{2ex} Let $1\to H\overset{\!i}{\to} G\to F\to 1$ be an exact sequence of groups of $\s C$. Then $G\to S$ factors as $G\to F\to S$, where both $G\to F$ and $F\to S$ are faithfully flat and locally of finite presentation \cite[${\rm VI_{B}}$, Proposition 9.2, (xi) and (xii)]{sga3}. Further, $G$ is an $H_{F}\e$-\e torsor over $F$ with action $\varsigma\colon G\be \times_{\lbe S}\le H\to G, (g,h)\mapsto g\e i(h)$. See Subsection \ref{tsec}.
\end{example}

For every integer $i\geq 0$ and every $S$-scheme $X$, we define $X^{\le i}$ recursively by $X^{\le i+1}=X\be\times_{\be S}\be X^{\le i}$, where $X^{\e 0}=S$. If $X\to S$ factors as $X\to Y\overset{\!h}{\to} S$, we define $X^{\le i}_{\!/Y}$ similarly, i.e., 
$X^{\le 0}_{\!/Y}=Y$ and $X^{\le i+1}_{\!/Y}=X\be\times_{Y}X^{\le i}_{\!/Y}$. It is well-known \cite[\S10.2, p.~97]{may} that $\{X_{\!/Y}^{\le i+1}\}_{\e i\e\geq\e 0}$ is a simplicial $Y$-scheme with degeneracy maps
\begin{equation}\label{deg1}
X_{\!/Y}^{\le i+1}\to X_{\!/Y}^{\le i+2}, (x_{0},\dots,x_{i})\mapsto (x_{0},\dots,x_{j},x_{j},\dots, x_{i}), \quad 0\leq j\leq i,
\end{equation}
and face maps\,\footnote{\e In several formulas below, ordered tuples of the form $y_{\le k},\dots,y_{\le r}$ with $k>r$ must be omitted for the formulas to make sense.}
\begin{equation}\label{fa1}
\partial^{\,j}_{i+1}\colon X_{\!/Y}^{\le i+2}\to X_{\!/Y}^{\le i+1}, (x_{0},\dots,x_{i+1})\mapsto (x_{0},\dots,x_{j-1},x_{j+1},\dots, x_{i+1}),\quad 0\leq j\leq i+1.
\end{equation}

Note that
\begin{equation}\label{b1}
\partial^{\e 0}_{\le 1}\circ\beta_{1}=\varsigma
\end{equation}
and
\begin{equation}\label{b2}
\partial^{\e1}_{\le 1}\circ\beta_{1}=p_{X},
\end{equation}
where $\varsigma$ and $\beta_{\le 1}$ are given by \eqref{smor} and \eqref{no}, respectively.

\smallskip

Now let $h^{\lbe*}\be F\colon \s C_{\be/Y}\to \abe$ be the restriction of $F$ to $\s C_{\be/Y}$, i.e.,
\[
(\e h^{\lbe*}\be F\e)(Z\to Y\e)=F(Z\to Y\overset{\!h}{\to} S\e).
\]
Further, let $\check{H}^{i}(X/Y,h^{\lbe*}\be F\e)$ be the \v{C}ech cohomology groups of the abelian presheaf $h^{\lbe*}\be F$ on $\s C_{\be/Y}$ with respect to the fppf covering $\xi\colon X\to Y$. The groups $\check{H}^{i}(X/Y,h^{\lbe*}\be F\e)$ are the cohomology groups of the complex of abelian groups $\big\{F\lbe\big(X_{\!/Y}^{\le i+1}\big), \partial^{\, i+1}\big\}_{\e i\e\geq\e 0}$, where $\partial^{\, i+1}=\sum_{\e j=0}^{\e i+1}\e (-1)^{\le j}\lbe F\lbe\big(\partial_{\le i+1}^{\,j}\lbe\big)\colon F\lbe\big(X_{\!/Y}^{\le i+1})\to F\lbe\big(X_{\!/Y}^{\le i+2})\,$\footnote{\e Here we regard $X_{\!/Y}^{\le i+1}$ and $\partial_{\le i+1}^{\,j}$ as $S$-schemes and $S$-morphisms, respectively}\,.

\smallskip

The next statement was obtained by Sansuc \cite[Lemma 6.12]{san} in the case $S=\spec k$, where $k$ is a field. Unfortunately, his proof omits many details and we cannot claim that it extends without difficulty to the case of an arbitrary base scheme $S$. Therefore we include below a detailed proof of the general case.

\begin{lemma}\label{gsan} Let $F\colon \s C\to \abe$ with $F(1_{\lbe S})=0$, $G\to S$, $h\colon Y\to S$ and $\xi\colon X\to Y$ be as above. Assume that, for every integer $i\geq 1$, the maps $\psi_{\le G^{\le\lle i}\lbe,\e G}\colon F(G^{\e i}\le)\oplus F(G\e)\to F(G^{\e i+1}\le)$ and $\psi_{X,\e G^{\e i}}\colon F(X)\oplus F(G^{\e i})\to F(X\be \times_{\lbe S} G^{\e i}\e)$ \eqref{fxy} are isomorphisms of abelian groups. Then $\check{H}^{i}(X/Y,h^{\lbe*}\be F\le)=0$ for every $i\geq 2$ and there exists a canonical exact sequence of abelian groups
\begin{equation}\label{vphi}
0\to \check{H}^{\le 0}(X/Y,h^{\lbe*}\be F\e)\to F(X\le)\overset{\!\varphi}{\to}F(G\e)\to\check{H}^{\le 1}(X/Y,h^{\lbe*}\be F\e)\to 0,
\end{equation}
where the map $\varphi$ is the composition
\begin{equation}\label{vphi2}
\varphi\colon F(X)\overset{\!\!\be F(\varsigma)}{\lra}F(X\be \times_{\lbe S}\le G)\overset{\!\psi_{X,\le G}^{-1}}{\to}F(X)\be\oplus\be F(G\le)\overset{\be p_{\lbe F\lbe(G\lle)}}{\lra}F(G\le).
\end{equation}
Above, $\varsigma$ is the $S$-morphism \eqref{smor} and $p_{\lbe F\lbe(G\lle)}$ is the canonical projection.
\end{lemma}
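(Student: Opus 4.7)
The plan is to exploit the torsor structure of $\xi\colon X\to Y$ to identify each term of the \v{C}ech complex $\big\{F(X_{\!/Y}^{\le i+1}),\partial^{\,i+1}\big\}$ in explicit coordinates, and then read off the cohomology by direct computation.

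\textbf{Step 1.} Iterating the isomorphism $\beta_{\le 1}=(p_{\lbe X},\varsigma)_{\lbe S}$ of \eqref{no}, I would construct canonical $S$-isomorphisms
\[
\beta_{\le i}\colon X\times_{\lbe S} G^{\le i}\isoto X_{\!/Y}^{\le i+1},\quad (x_0,g_1,\dots,g_i)\mapsto(x_0,\,x_0 g_1,\,\dots,\,x_0 g_1\cdots g_i),
\]
for every $i\geq 0$, and verify that under these identifications the face maps \eqref{fa1} take the following form: $\partial^0_{\le i+1}$ applies $\varsigma$ to the first pair $(x_0,g_1)$ and is the identity on $(g_2,\dots,g_{i+1})$; for $1\leq j\leq i$, $\partial^j_{\le i+1}$ replaces the consecutive pair $(g_j,g_{j+1})$ by its product $m(g_j,g_{j+1})$ under the multiplication $m\colon G\times_{\lbe S}G\to G$; and $\partial^{i+1}_{\le i+1}$ simply drops the last $G$-coordinate.

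\textbf{Step 2.} Iterating the hypothesized additivity isomorphisms $\psi_{\le X,\le G^{\le i}}$ and $\psi_{\le G^{\le j},\le G}$ yields canonical splittings $F(X_{\!/Y}^{\le i+1})\simeq F(X)\oplus F(G)^{\oplus i}$ for every $i\geq 0$. Two identities in these coordinates are decisive: first, by the very definition \eqref{vphi2} of $\varphi$, one has $F(\varsigma)(a)=(a,\varphi(a))\in F(X)\oplus F(G)$ for every $a\in F(X)$; second, $F(m)(b)=(b,b)\in F(G)\oplus F(G)$ for every $b\in F(G)$. Both are direct consequences of the assumption $F(1_{\lbe S})=0$ combined with the section identities $\varsigma\circ(1_{\lbe X},\varepsilon_{\lbe G}\circ\pi_{\lbe X})=1_{\lbe X}$ and $m\circ(1_{\lbe G},\varepsilon_{\lbe G}\circ\pi_{\lbe G})=1_{\lbe G}$. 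Combining with Step~1, one then computes each coface $F(\partial^j_{\le i+1})\colon F(X)\oplus F(G)^{\oplus i}\to F(X)\oplus F(G)^{\oplus(i+1)}$ explicitly: $F(\partial^0_{\le i+1})(a,b_1,\dots,b_i)=(a,\varphi(a),b_1,\dots,b_i)$; $F(\partial^j_{\le i+1})(a,b_1,\dots,b_i)=(a,b_1,\dots,b_{j-1},b_j,b_j,b_{j+1},\dots,b_i)$ for $1\leq j\leq i$; and $F(\partial^{i+1}_{\le i+1})(a,b_1,\dots,b_i)=(a,b_1,\dots,b_i,0)$.

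\textbf{Step 3.} Taking the alternating sum $\partial^{\,i+1}=\sum_{j=0}^{i+1}(-1)^{\le j}F(\partial^{\,j}_{\le i+1})$ then yields the \v{C}ech coboundary in closed form. In the lowest degrees one reads off directly $\partial^{\,1}(a)=(0,\varphi(a))$ and $\partial^{\,2}(a,b)=(a,\varphi(a),0)$, whence $\check H^{\le 0}(X/Y,h^{\lbe*}\be F)=\ker\varphi$ (embedded canonically in $F(X)$) and $\check H^{\le 1}(X/Y,h^{\lbe*}\be F)=F(G)/\img\varphi$, giving the exact sequence \eqref{vphi}. For $i\geq 2$, a case-analysis on the parity of $i$ produces explicit formulas for $\partial^{\,i+1}$ (for instance, when $i$ is odd one finds $\partial^{\,i+1}(a,b_1,\dots,b_i)=(a,\varphi(a),b_2,b_2,b_4,b_4,\dots,b_{i-1},b_{i-1},0)$), from which one reads off $\ker\partial^{\,i+1}=\img\partial^{\,i}$, and hence $\check H^{\le i}(X/Y,h^{\lbe*}\be F)=0$. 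The main obstacle is the combinatorial bookkeeping required to evaluate these alternating sums position-by-position; all the needed identities reduce to the elementary computation $\sum_{j=a}^{b}(-1)^{\le j}\in\{0,\pm 1\}$.
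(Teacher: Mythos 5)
Your proposal is correct and follows essentially the same route as the paper's proof: transport the \v{C}ech complex along the isomorphisms $\beta_{\le i}$ and the iterated additivity splittings onto the complex $\{F(X)\oplus F(G)^{\oplus i}\}$, compute the cofaces explicitly from the identities $F(\varsigma)(a)=(a,\varphi(a))$ and $F(m)(b)=(b,b)$, and read off the cohomology from the resulting closed formulas. The only (immaterial) divergence is in degrees $i\geq 2$, where the paper exhibits a contracting homotopy $\lambda_{\e i}(a,b_{\le 1},\dots,b_{\le i})=(a,-b_{\le 1},b_{\le 3},\dots,b_{\le i})$ instead of checking $\krn\partial^{\,i+1}=\img\partial^{\,i}$ directly from the explicit formulas.
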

\begin{remark} The proof of the lemma will show that $F(\vartheta)\colon F(G\le)\to F(X\!\times_{\lbe Y}\! X\le)$ \eqref{si} factors as
\[
F(G\le)\overset{\!F\lbe(\lbe\vartheta)^{\prime}}{\lra} 
\krn\partial^{\e 2}\subseteq F(X\!\be\times_{\lbe Y}\be\! X\le)
\]
and the maps $\check{H}^{\le 0}(X/Y,h^{\lbe*}\be F\e)\to F(X\le)$ and $F(G\e)\to\check{H}^{\le 1}(X/Y,h^{\lbe*}\be F\e)$ in \eqref{vphi} are, respectively, the inclusion and the composition
\[
F(G\le)\overset{\!F\lbe(\lbe\vartheta)^{\prime}}{\lra}\krn\partial^{\e 2}\twoheadrightarrow \krn\partial^{\e 2}\be/\e \img\partial^{\e 1}=\check{H}^{\le 1}(X/Y,h^{\lbe*}\be F\e).
\]
\end{remark}
\begin{proof} It is well-known \cite[\S10]{may} that $\{X\!\times_{\be S}\be G^{\e i}\e\}_{\e i\e\geq\e 0}$ is canonically endowed with a simplicial $S$-scheme structure whose degeneracy and face maps are defined as follows. For every integer $j$ such that $0\leq j\leq i$, define $v_{\lbe i}^{\e j}=(\e 1_{\le G^{\le j}},\varepsilon,1_{\le G^{\e i-j}})_{S}\colon G^{\e i}\to G^{\e i+1}$, where $\varepsilon\colon S\to G$ is the unit section of $G$, i.e., 
\begin{equation}\label{vij}
v_{\lbe i}^{\e j}(g_{\le 1},\dots, g_{\le i})=(\e g_{\le 1},\dots, g_{\le j}, 1, g_{\le j+1},\dots, g_{\le i}).
\end{equation}
Then the degeneracy maps of $\{X\be\times_{\be S}\lbe G^{\e i}\e\}_{\e i\e\geq\e 0}$ are the maps $s^{\e j}_{\lbe i}=(1_{\lbe X},v_{\lbe i}^{\e j})_{S}\colon X\be\times_{\be S} G^{\e i}\to X\be\times_{\be S}\lbe G^{\e i+1}$ given by
\begin{equation}\label{deg2}
s_{i}^{\e j}(x,g_{1},\dots,g_{i})=(x, g_{\le 1},\dots, g_{\le j}, 1, g_{\le j+1},\dots, g_{\le i}).
\end{equation}
Next, for $i\geq 1$ and $1\leq j\leq i+1$, define
\begin{equation}\label{wij}
w_{i+1}^{\e j}\colon G^{\e i+1}\to G^{\e i}, (\e g_{\le 1},\dots,g_{\le i+1})\mapsto (\e g_{\le 1},\dots,g_{\le j-1},g_{\le j}g_{\le j+1},g_{\le j+2}\dots, g_{\le i+1}).
\end{equation}
We extend the above definition to the pair $(i,j)=(0,1)$ by setting
\begin{equation}\label{w11}
w_{1}^{\lle 1}=\pi\colon G\to S,
\end{equation}
where $\pi$ is the structural morphism of $G\e$. Then the face maps of $\{X\be\times_{\be S}\lbe G^{\e i}\e\}_{\e i\e\geq\e 0}$ are given by $d_{\le i+1}^{\,j}=(1_{\lbe X},w_{i+1}^{\e j}\e)_{S}\colon X\be\times_{\be S}\lbe G^{\e i+1}\to X\be\times_{\be S}\lbe G^{\e i}$ if $1\leq j\leq i+1$ and
\begin{equation}\label{fa2}
d_{\e i+1}^{\,\le 0}(x,g_{1},\dots,g_{\le i+1})=(xg_{1},g_{\le 2},\dots,g_{\le i+1}).
\end{equation}
If $i\geq 1$ and $1\leq j\leq i+1$, then
\begin{equation}\label{fa3}
d_{\le i+1}^{\,\le j}(x,g_{\le 1},\dots,g_{\le i+1})=(x, g_{\le 1},\dots,g_{\le j-1},g_{\le j}g_{\le j+1},g_{\le j+2}\dots, g_{\le i+1}).
\end{equation}
Now, for every integer $i\geq 1$, \eqref{no} induces (by iteration) an isomorphism of $S$-schemes
\[
\beta_{\le i}\colon X\!\times_{\lbe S}\be G^{\e i}\isoto  X_{\!/Y}^{\le i+1}, (x,g_{1},\dots,g_{i})\mapsto (x,xg_{1},\dots, xg_{1}\cdots g_{i}).
\]
In conjunction with $\beta_{\e 0}=1_{\lbe X}$, the preceding maps define an isomorphism of simplicial $S$-schemes
$\{\beta_{\le i}\}_{\e i\e\geq\e 0}\colon \{X\!\times_{S}\le G^{\e i}\e\}_{\e i\e\geq\e 0}\isoto \{X_{\!/Y}^{\le i+1}\}_{\e i\e\geq\e 0}$, i.e., the maps $\beta_{\le i}$ commute with the operators \eqref{deg1}, \eqref{fa1}, \eqref{deg2}, \eqref{fa2} and \eqref{fa3}, in the sense of \cite[Definition 9.1, p.~83]{may}. 
The above map induces an isomorphism of cosimplicial abelian groups 
\begin{equation}\label{iso}
\{\le F\lbe(\le \beta_{\le i}^{\lle -1})\}_{\e i\e\geq\e 0}\colon \big\{F(X\!\times_{\be S} G^{\e i}\le)\big\}_{\e i\e\geq\e 0}\!\isoto\big\{F\be\le\big(X_{\!/Y}^{\le i+1}\big)\big\}_{\e i\e\geq\e 0}.
\end{equation}
We will now define a cosimplicial abelian group structure on $\big\{F(X)\lbe\oplus\lbe F(G^{\, i})\e\big\}_{\e i\e\geq\e 0}$ so that the map
\begin{equation}\label{soi}
\{\le \psi_{X,\e G^{\le i}}\}_{\e i\e\geq\e 0}\colon \big\{F(X)\!\oplus\! F(G^{\, i})\e\big\}_{\e i\e\geq\e 0}\isoto  \big\{F(X\!\times_{\be S} G^{\e i}\le)\big\}_{\e i\e\geq\e 0}
\end{equation}
is an isomorphism of cosimplicial abelian groups.

The codegeneracy maps are
\[
1_{F(X)}\!\oplus\! F(v_{\lbe i}^{\, j})\colon F(X)\!\oplus\! F(G^{\, i+1})\to F(X)\!\oplus\! F(G^{\, i}),
\]
where $0\leq j\leq i$ and the maps $v_{\lbe i}^{\e j}\colon G^{\e i}\to G^{\e i+1}$ are given by \eqref{vij}. The coface maps are 
\[
(\e p_{\lbe F(X)},\alpha_{j}^{\le i+1})\colon F(X)\oplus F(G^{\, i})\to F(X)\be\oplus\be F(G^{\, i+1}), (a,b)\mapsto (a,\alpha_{j}^{\e i+1}(a,b)),
\]
where $\e 0\leq j\leq i+1\e$, $p_{ F\lbe(X)}\colon F(X)\oplus F(G^{\, i})\to F(X)$ is the canonical projection and the maps $\alpha_{j}^{\le i+1}\colon F(X)\oplus F(G^{\, i})\to F(G^{\, i+1})$ are defined by
\begin{equation}\label{boi2}
\alpha_{j}^{\le i+1}(a,b)=\begin{cases}
F(\e p_{\le G}^{\le 1})(\varphi(a))+F(\e p_{\le G^{\le i}})(b\le)\quad \text{if $j=0$}\\
F(w_{i+1}^{\e j})(b)\hskip 3.06cm \text{if $1\leq j\leq i+1$},
\end{cases}
\end{equation}
where $a\in F(X),b\in F(G^{\, i})$, $p_{\le G}^{\le 1}\colon G^{\e i+1}\to G$ is the first projection, $\varphi$ is the map \eqref{vphi2}, $p_{\le G^{\le i}}\colon G^{\e i+1}\to G^{\e i}$ is the projection onto the last $i$ factors (when $i\geq 1$) and the maps $w_{i+1}^{\e j}\colon G^{\e i+1}\to G^{\e i}$ are given by \eqref{wij} and \eqref{w11}. To check that \eqref{soi} is, indeed, an  isomorphism of cosimplicial abelian groups, we need to check that the following diagrams commute:
\begin{equation}\label{d1}
\xymatrix{F(X)\oplus F(G^{\e i}\le)\ar[d]_{(\e p_{\lbe F(\lbe X)},\e\alpha_{j}^{\le i+1})}\ar[rr]^(.5){\psi_{X,\le G^{\e i}}}&& F(X\times_{S}G^{\e i}\e)\ar[d]^{F(\le d_{i+1}^{\,j}\le)}\\
F(X)\oplus F(G^{\e i+1}\le)\ar[rr]^(.5){\psi_{X,\le G^{\e i+1}}}&& F(X\times_{S}G^{\e i+1}\e),
}
\end{equation}
where $0\leq j\leq i+1$, and
\[
\xymatrix{F(X)\oplus F(G^{\e i+1}\le)\ar[d]_{1_{F(X)}\e\oplus\, F(v_{\lbe i}^{\e j})}\ar[rr]^(.5){\psi_{X,\le G^{\le i+1}}}&& F(X\times_{S}G^{\e i+1}\e)\ar[d]^{F(\le s_{i}^{\e j}\le)}\\
F(X)\oplus F(G^{\e i}\le)\ar[rr]^(.5){\psi_{X,\le G^{\le i}}}&& F(X\times_{S}G^{\e i}\e),
}
\]
where $0\leq j\leq i$. Except when $j=0$ in \eqref{d1}, the commutativity of the preceding diagrams follows without difficulty from the definitions \eqref{fxy}, \eqref{vij}-\eqref{w11}, \eqref{fa3} and \eqref{boi2}. The commutativity of \eqref{d1} when $j=0$ follows from the definitions \eqref{fa2}, \eqref{boi2}, the commutativity of diagram \eqref{big} for $(X,Y,Z)=(X,G,G^{\e i})$ and the following equality of maps $F(X)\to F(X\times_{S}G\e)$:
\begin{equation}\label{ana0}
\psi_{X,\e G}\circ (\e i_{\le F(X)}+i_{\le F(G\le)}\circ\varphi\e)=F(\varsigma),
\end{equation}
where $i_{\le F(X)}\colon F(X)\to F(X)\oplus F(G\le)$ and $i_{\le F(G\le)}\colon F(G\le)\to F(X)\oplus F(G\le)$ are the canonical embeddings. Given the definition of $\varphi$, \eqref{ana0} follows from the equality 
\[
p_{F(X)}\circ \psi_{X,\e G}^{-1}\circ F(\varsigma)=1_{F(X)},
\]
which is formula \cite[(25), p.~480]{ga4}.

\smallskip

Thus we have checked that \eqref{soi} is an  isomorphism of cosimplicial abelian groups.

\smallskip

Next, composing \eqref{iso} and \eqref{soi}, we obtain an isomorphism of cosimplicial abelian groups 
\begin{equation}\label{isos}
\{\le F\lbe(\le \beta_{\le i}^{\e -1})\circ\psi_{X,\e G^{\le i}} \}_{\e i\e\geq\e 0}\colon \big\{F(X)\!\oplus\! F(G^{\, i})\e\big\}_{\e i\e\geq\e 0}\isoto\big\{F\le\be\big(X_{\!/Y}^{\le i+1})\}_{\e i\e\geq\e 0}.
\end{equation}
We will now define a cosimplicial abelian group structure on $\big\{F(X)\!\oplus\! F(G\e)^{\le i}\le\big\}_{\e i\e\geq\e 0}$ and an isomorphism 
of cosimplicial abelian groups
\begin{equation}\label{som}
\{\le 1_{F\lbe(\lbe X)}\be\oplus\be\gamma_{G}^{(i)}\}_{\e i\e\geq\e 0}\colon \big\{F(X)\be\oplus\be F(G\e)^{\le i}\e\big\}_{\e i\e\geq\e 0}\!\isoto\big\{F(X)\be\oplus\be F(G^{\e i}\le)\e\big\}_{\e i\e\geq\e 0}.
\end{equation}
For every integer $i\geq 0$, let $\gamma_{G}^{(i)}\colon F(G\e)^{i}\to F(G^{\e i}\e)$ be defined (recursively) by $\gamma_{G}^{(0)}=0$, $\gamma_{G}^{(1)}=1_{\lbe F(G\le)}$ and $\gamma_{G}^{(i+1)}(b_{\le 1},\dots,b_{\e i+1})=\psi_{\e G^{\le i}\lbe,\e G}\lle(\gamma_{G}^{(i)}(b_{\le 1},\dots,b_{\le i}),b_{\le i+1})$ for $i\geq 1$. Since the maps $\psi_{\e G^{\le i}\lbe,\e G}$ are isomorphisms of abelian groups for every $i\geq 1$, so also is $\gamma_{G}^{(i)}$. Further, \eqref{fxy} yields the formula
\begin{equation}\label{for}
\gamma_{G}^{(i)}(b_{\le 1},\dots,b_{\le i})=\sum_{k\e=\e 1}^{i}F(\e p_{G}^{\e k})(b_{\le k}),
\end{equation}
where $p_{G}^{\e k}\colon G^{\e i}\to G$ is the $k$-th projection morphism. We now observe that the maps $\gamma_{G}^{(i)}$ satisfy the following inversion formula \eqref{inv}\,\footnote{\e This inversion formula is the key to obtaining without difficulty the formulas \eqref{cij} below.}\,: for every integer $k$ such that $1\leq k\leq i$, let $\theta^{\e k}\colon G\to G^{\e i}, g\mapsto (1,\dots,g_{(k)},\dots,1),$ where the subscript indicates position, i.e., $\theta^{\e k}$ is the unique $S$-morphism such that $p_{G}^{\e k}\lbe\circ\lbe \theta^{\e k}=1_{G}$ and $p_{G}^{\e j}\lbe\circ\lbe \theta^{\, k}=\varepsilon\be\circ\be \pi$ for $j\neq k$. Then, for every $k$ such that $1\leq k\leq i$, we have
\begin{equation}\label{inv}
b_{\le k}=F(\theta^{\e k}\le)\be\circ\be \gamma_{G}^{(i)}(b_{\le 1},\dots,b_{\le i}).
\end{equation}
Next, for every $i\geq 0$, we define maps $\delta_{\lbe j}^{\e i+1}\colon F(X)\oplus F(G\e)^{\le i}\to F(X)\oplus F(G\e)^{\le i+1}$ (where $\e 0\leq j\leq i+1\e$) and $\sigma_{\be j}^{\e i}\colon F(G\e)^{\le i+1}\to F(G\e)^{\le i}$ (where $\e 0\leq j\leq i\e$) by the commutativity of the diagrams
\begin{equation}\label{d3}
\xymatrix{F(X)\be\oplus\be F(G\e)^{\e i}\ar@{..>}[d]_(.45){\delta_{\lbe j}^{\e i+1}}\ar[rr]^(.5){1_{F\lbe(\lbe X)}\oplus\e\gamma_{G}^{(i)}}_{\sim}&& F(X)\be\oplus\be F(G^{\e i}\le)\ar[d]^(.45){(\e p_{\lbe F(X)},\e\alpha_{j}^{\le i+1})}\\
F(X)\be\oplus\be F(G\e)^{\e i+1}\ar[rr]^(.5){1_{F(X)}\oplus\e\gamma_{G}^{(i+1)}}_{\sim}&& F(X)\be\oplus\be F(G^{\e i+1}\le),
}
\end{equation}
where $0\leq j\leq i+1$, and
\[
\xymatrix{F(G\e)^{\le i+1}\ar@{..>}[d]_(.45){\sigma_{\be j}^{\le i}}\ar[rr]^(.5){\gamma_{G}^{(i+1)}}_(.47){\sim}&&  F(G^{\e i+1}\le)\ar[d]^{F(v_{\lbe i}^{\, j})}\\
F(G\e)^{\le i}\ar[rr]^(.48){\gamma_{G}^{(i)}}_(.47){\sim}&& F(G^{\e i}\e),
}
\]
where $0\leq j\leq i$. Then $\big\{F(X)\!\oplus\! F(G\e)^{\e i}\e\big\}_{\e i\e\geq\e 0}$ is a cosimplicial abelian group with codegeneracy maps $1_{\lbe F\lbe(X)}\be\oplus\be\sigma_{\! j}^{\le i}$ and coface maps $\delta_{\lbe j}^{\e i+1}$. Further, \eqref{som} is an isomorphism of cosimplicial abelian groups. Composing \eqref{isos} and \eqref{som}, we obtain an isomorphism of cosimplicial abelian groups
\[
\big\{\le F\lbe(\le \beta_{\le i}^{\lle -1})\circ\psi_{X,\e G^{\le i}}\circ\big( 1_{F\lbe(\lbe X)}\be\oplus\lbe\gamma_{G}^{(i)}\le\big)\big\}_{\e i\e\geq\e 0}\colon\big\{F(X)\lbe\oplus\lbe F(G\e)^{\le i}\e\big\}_{\e i\e\geq\e 0}\!\isoto\big\{F\le\be\big(X_{\!/Y}^{\le i+1}\big)\big\}_{\e i\e\geq\e 0}.
\]
The latter map induces isomorphisms of the associated cochain complexes of abelian groups
\begin{equation}\label{dura}
\big\{F(X)\be\oplus\be F(G\e)^{\le i}, \delta^{\e i}\big\}_{\e i\e\geq\e 0}\isoto \big\{F\le\be\big(X_{\!/Y}^{\le i+1}\big), \partial^{\, i+1}\big\}_{\e i\e\geq\e 0}
\end{equation}
and the corresponding cohomology groups
\begin{equation}\label{Fmap0}
H^{\le i}(F(X)\be\oplus\be F(G\e)^{\bullet})\isoto\check{H}^{\le i}(X/Y,h^{\lbe *} \be F\e)\qquad(\e i\geq 0\e),
\end{equation}
where $\delta^{\e i}=\sum_{\e j=0}^{\e i+1}\e (-1)^{\le j}\lbe \delta_{\lbe j}^{\e i+1}\colon F(X)\be\oplus\be F(G\e)^{\le i}\to F(X)\be\oplus\be F(G\e)^{\le i+1}$.

Note that, since $\beta_{\e 0}=1_{X}, \psi_{X,G^{\le\lle 0}}=1_{F(X)}$ and $\gamma_{G}^{(0)}=0$, the map \eqref{Fmap0} for $i=0$ is an {\it equality}
$\check{H}^{0}(X/Y,h^{\lbe*}\be F\e)=\krn\delta^{\e 0}$.

We will now compute the maps $\delta^{\e i}$ explicitly. 

\smallskip

Using definitions \eqref{wij}, \eqref{w11}, \eqref{boi2}, formulas \eqref{for}, \eqref{inv} and the commutativity of diagram \eqref{d3}, the maps $\delta_{\lbe j}^{\e i+1}$ (where $\e 0\leq j\leq i+1\e$) are given by $\delta_{\lbe j}^{\e i+1}(a,b_{\le 1},\dots, b_{\le i})=(a, c_{\le 1}^{(i,\e j)},\dots,c_{\le i+1}^{(i,\e j)})$, where the elements $c_{\le k}^{(i,\e j)}\in F(G\e)$ for $1\leq k\leq i+1$ are defined by
\begin{equation}\label{cij}
c_{\le k}^{(i,\e j)}=\begin{cases} \varphi(a)\quad\text{if $k=1$ and $j=0$}\\
b_{\le k-1}\hskip .5cm\text{if $2\leq k\leq i+1$ and $0\leq j\leq k-1$}\\
b_{\le k}\hskip .9cm\text{if $1\leq k\leq i$ and $k\leq j\leq i+1$}\\
0\hskip 1.05cm\text{if $(i,j)=(0,1)$ or $k=j=i+1\geq 2$}.
\end{cases}
\end{equation}
The preceding formulas yield\,\e\footnote{\e These formulas generalize those stated (without proof) in \cite[p.~45]{san}.}:
\[
\begin{array}{rcl}
\delta^{\e 0}\lbe(a)&=&(0,\varphi(a))\\
\delta^{\e 1}\lbe(a,b_{\le 1})&=&(a,\varphi(a),0)\\
\delta^{\e 2r}\be(a,b_{\le 1},\dots,b_{\le 2\le r})&=&(0,\varphi(a)\!-\be b_{\le 1},0,b_{\le 2}\!-\! b_{\le 3},0,\dots,b_{\e 2\le r-2}-b_{\e 2\le r-1},0,b_{\e 2\le r})\\
\delta^{\e 2\le r+1}\be(a,b_{\e 1},\dots,b_{\e 2\le r+1})&=&(a,\varphi(a),b_{\e 2}, b_{\e 2},\dots,b_{\e 2\le r},b_{\e 2\le r},0),
\end{array}
\]
where $r\geq 1$.

It is now clear that $\check{H}^{0}(X/Y,h^{\lbe *}\be F\e)=\krn\delta^{\e 0}=\krn\varphi$. Next we observe that the first component of the map \eqref{dura}, i.e., $F\be\lle(\le\beta_{\le 1}^{\le-1})\circ\psi_{X,\le G}\colon F(X)\oplus F(G\le)\isoto F(X\be\times_{\lbe Y}\be X\le)$, induces (via restriction of domain) a map $\krn \delta^{\e 1}\to \krn\partial^{\e 2}$. The composition of the latter map and the canonical isomorphism $F(G\le)\isoto\krn\delta^{\e 1}\subseteq F(X)\oplus F(G\le), b\mapsto(0,b)$, is a map $F\lbe(\lbe\vartheta)^{\prime}\colon F(G\le)\to \krn\partial^{\e 2}$ such that the composition
\[
F(G\le)\overset{\!F\lbe(\lbe\vartheta)^{\prime}}{\lra}\krn\partial^{\e 2}\hookrightarrow F(X\!\times_{\lbe Y}\!\be X\lle)
\]
is the map $F(\vartheta)\colon F(G\le)\to F(X\!\times_{\lbe Y}\! X\le)$ \eqref{si}. Further, $F\lbe(\lbe\vartheta)^{\prime}$ induces an isomorphism of abelian groups $F(G\le)\be/\e \img\varphi\isoto \check{H}^{\le 1}(X/Y,h^{\lbe *}\be F\e)$, namely the composition 
\[
F(G\le)\be/\e \img\varphi\isoto\krn \delta^{\e 1}\!/\e \img\delta^{\e 0}\isoto  \krn\partial^{\e 2}\be/\e \img\partial^{\e 1}=\check{H}^{\le 1}(X/Y,h^{\lbe *}\be F\e),
\]
where the second isomorphism is the map \eqref{Fmap0} for $i=1$. Thus we can define the map $F(G\le)\twoheadrightarrow \check{H}^{\le 1}(X/Y,h^{\lbe *}\be F\e)$ appearing in \eqref{vphi} as the composition 
\[
F(G\le)\overset{\!F\lbe(\lbe\vartheta)^{\prime}}{\lra}\krn\partial^{\e 2}\twoheadrightarrow \krn\partial^{\e 2}\be/\e \img\partial^{\e 1}=\check{H}^{\le 1}(X/Y,h^{\lbe *}\be F\e).
\]
It remains only to check that $\check{H}^{\le i}(X/Y,h^{\lbe *}\be F\e)=0$ for $i\geq 2$. Following \cite[p.~45]{san}, we define the map $\lambda_{\e i}\colon F(X)\lbe\oplus\lbe F(G\e)^{\le i}\to F(X)\lbe\oplus\lbe F(G\e)^{\le i-1}$ by $\lambda_{\e i}(a,b_{\le 1},\dots,b_{\le i})=(a,-b_{\le 1},b_{\le 3},\dots,b_{\le i})$, where $i\geq 2$. Then $\delta^{\e i-1}\circ \lambda_{\e i}+\lambda_{\e i+1}\circ\delta^{\e i}=1_{F(X)\le\oplus\le F(G\e)^{\le i}}$
for every $i\geq 2$, whence $\krn \delta^{\e i}=\img \delta^{\e i-1}$ for all $i\geq 2$, i.e., $\check{H}^{i}(X/Y,h^{\lbe *}\be F\e)=0$ for every $i\geq 2$ by \eqref{Fmap0}. The proof is now complete.
\end{proof}

\begin{remark}\label{gps} Let $1\to H\overset{\!i}{\to} G\to F\to 1$ be an exact sequence of groups of $\s C$. Then $G$ is an $H_{F}\e$-\e torsor over $F$ with action $\varsigma\colon G\times_{S}H\to G,\e (g,h\le)\mapsto g\e i(h)$ (see Example \ref{2ex}).
Now assume that all the conditions of the lemma hold in this setting. Then 
$\varphi\colon F(G\e)\to F(H)$ \eqref{vphi2} is the map $F(i\e)$. In effect, applying the  argument at the beginning of the proof of \cite[Lemma 2.7, p.~480]{ga4} to the unit section $\varepsilon$ of $G$ (rather than to the unit section of $H$), we obtain the formula $F(\varepsilon\be\times_{\be S}\be 1_{\lbe H}\le)=p_{\le F(H\le)}\circ \psi_{G,\e H}^{-1}$, whence
\[
\varphi=p_{\le F(H\le)}\circ \psi_{G,\e H}^{-1}\circ F(\varsigma\le)=F(\varepsilon\be\times_{\be S}\be 1_{\lbe H}\le)\circ F(\varsigma\le)=F(i),
\]
as claimed.
\end{remark}

\subsection{A complement} Let $S$ be a locally noetherian normal scheme, let $\s C=S_{\fl}$ and assume that the objects $X\to S$ and $G\to S$ considered above satisfy the conditions of Corollary \ref{xg}. Then an application of Lemma \ref{gsan} to the functor
$F=\br_{\lbe\rm a,\e S}^{\prime}\colon S_{\fl}\to\abe$ \eqref{braf} yields an isomorphism of abelian groups
\begin{equation}\label{anag}
\psi_{X,\e G}\colon \bra\oplus\br_{\be\rm a}^{\prime}(G/S)\to \br_{\be \rm a}^{\e\prime}(X\!\times_{\be S}\be G\lbe/\lbe S\le) 
\end{equation}
and a map  \eqref{vphi2}
\begin{equation}\label{vphi3}
\varphi=p_{\e\br_{\be\rm a}^{\prime}(G/S)}\circ \psi_{X,\e G}^{-1}\circ \br_{\be\rm a}^{\prime}\e\varsigma\colon\bra\to\br_{\be\rm a}^{\prime}(G/S\e).
\end{equation}
On the other hand, the functor $F=\br_{\be 1,\e S}^{\prime}\colon S_{\fl}\to\abe$ does {\it not} satisfy the conditions of Lemma \ref{gsan} since $F(1_{\lbe S})=\brp S$ is nonzero in general. However, by the proof of Corollary \ref{xg} and Remark \ref{comm}, the map \eqref{c0}
\[
\zeta_{\le X,\le G}\colon \bro\oplus\br_{\be \varepsilon}^{\prime}(G\lbe/\lbe S\le)\to\br_{\be 1}^{\e\prime}(X\!\times_{\be S}\be G\lbe/\lbe S\le)
\]
(which may be regarded as an analog of \eqref{anag} for $\br_{\be 1,\e S}^{\prime}$) is an isomorphism of abelian groups. Thus we may define the following variant of \eqref{vphi3}
\begin{equation}\label{vphi2p}
\varphi^{\e\prime}=p_{\e\br_{\be \varepsilon}^{\lbe\prime}(G\lbe/\lbe S\le)}\circ\zeta_{\le X,\le G}^{\e -1}\circ\br_{\be 1}^{\prime}\e\varsigma \colon \bro\to\br_{\be \varepsilon}^{\prime}(G\lbe/\lbe S\le).
\end{equation}
We now note that the following diagram has commutative squares
\[
\xymatrix{\bro\ar[r]^(.42){\br_{\be 1}^{\prime}\varsigma}\ar[d]_{c_{\e(X)}}&\br_{\be 1}^{\e\prime}(X\!\times_{\be S}\be G\lbe/\lbe S\le)\ar[d]_(.45){c_{(X\times_{\be S}G)}}\ar[r]^(.42){\zeta_{\le X,\le G}^{-1}}&  \bro\oplus\br_{\be \varepsilon}^{\prime}(G\lbe/\lbe S\le)\ar[d]_{(c_{\le(X)},\e c_{\le(G\le),\e \varepsilon})}\ar[rr]^(.6){p_{\le\br_{\be \varepsilon}^{\lbe\prime}(G\lbe/\lbe S\le)}}&&\br_{\be \varepsilon}^{\prime}(G\lbe/\lbe S\le)\ar[d]^{c_{\le(G\le),\e \varepsilon}}\\
\bra\ar[r]^(.42){\br_{\be \rm a}^{\prime}\varsigma}&\br_{\be \rm a}^{\e\prime}(X\!\times_{\be S}\be G\lbe/\lbe S\le)\ar[r]^(.42){\psi_{X,\e G}^{-1}}&\bra\oplus\br_{\be\rm a}^{\prime}(G/S)\ar[rr]^(.6){p_{\lle\br_{\be\rm a}^{\prime}(G/S)}}&&\br_{\be\rm a}^{\prime}(G/S\e),
}
\]
where $c_{\e(G\e),\e\varepsilon}$ is the map \eqref{csig0}. We conclude that the following diagram commutes
\begin{equation}\label{fif}
\xymatrix{\bro\ar[r]^{\varphi^{\le\prime}}\ar[d]_{c_{\e(X)}}&\br_{\be \varepsilon}^{\prime}(G\lbe/\lbe S\le)\ar[d]^{c_{\e(G\e),\e\varepsilon}}\\
\bra\ar[r]^{\varphi}&\br_{\be\rm a}^{\prime}(G/S\e).
}
\end{equation}
Next we claim that the following analog of \eqref{ana0} holds:
\begin{equation}\label{ana1}
\zeta_{\le X,\e G}\circ (i_{\le \br_{\be 1}^{\prime}\lbe(X/S\le)}+i_{\le \br_{\be \varepsilon}^{\prime}(G\lbe/\lbe S\le)}\circ\varphi^{\e\prime}\e)=\br_{\be 1}^{\prime}\e\varsigma.
\end{equation}
Given the definition of $\varphi^{\prime}$ \eqref{vphi2p}, we need only check that
\begin{equation}\label{ana2}
p_{\e\br_{\be 1}^{\prime}(X\be/\lbe S\e)}\circ\zeta_{\le X,\e G}^{\le -1}\circ \br_{\be 1}^{\prime}\e\varsigma=1_{\br_{\be 1}^{\prime}(X\be/\lbe S\e)}.
\end{equation}
To this end, recall $s_{\le 0}^{\e 0}=(1_{X},\varepsilon)_{S}$. Since $p_{\e G}\be\circ\be s_{0}^{\le 0}=\varepsilon\circ h\circ\xi$ and $\br_{\be 1}^{\prime}(\varepsilon)\circ p_{\e\br_{\be \varepsilon}^{\prime}(G\lbe/\lbe S\e)}=0$ by definition of $\br_{\be \varepsilon}^{\prime}(G\be/\lbe S\le)$ \eqref{brs}, we have
\[
(\br_{\be 1}^{\prime}\e s_{0}^{\le 0}\le)\be\circ\be \zeta_{\le X,\e G}=\br_{\be 1}^{\prime}(\e p_{\lbe X}\!\circ\!s_{0}^{\le 0}\le)\be\circ \lbe p_{\e\br_{\be 1}^{\prime}\lbe(X\be/\lbe S\e)}+\br_{\be 1}^{\prime}(\e p_{\e G}\be\circ\be s_{0}^{\le 0}\le)\be\circ\lbe p_{\e\br_{\be \varepsilon}^{\prime}(G\be/\lbe S\e)}=p_{\e\br_{\be 1}^{\prime}\lbe(X\be/\lbe S\e)},
\]
whence $p_{\e\br_{\be 1}^{\prime}\lbe(X\be/\lbe S\e)}\circ\zeta_{\le X,\le G}^{-1}\circ \br_{\be 1}^{\prime}\e\varsigma=(\br_{\be 1}^{\prime}\e s_{0}^{\le 0}\le)\!\circ\! (\br_{\be 1}^{\prime}\e\varsigma)=\br_{\be 1}^{\prime}(1_{X})=1_{\br_{\be 1}^{\prime}\lbe(X\be/\lbe S\e)}$, as claimed.

\begin{remark}\label{gps2} Let $1\to H\overset{\!i}{\to} G\to F\to 1$ be as in Remark \ref{gps} and assume that $G\to S$ and $H\to S$ satisfy the conditions of Corollary \ref{xg}. Write $\varepsilon_{\lbe H}$ and $\varepsilon_{\lbe G}$ for the unit sections of $H$ and $G$, respectively, and consider the diagram
\[
\xymatrix{\br_{\be 1}^{\prime}(G/S\e)\ar@/_2.7pc/[dd]_(.5){c_{(G\le)}}\ar[rr]^{\varphi^{\e\prime}} &&\br_{\! \varepsilon_{\lbe H}}^{\prime}\be(H\lbe/\lbe S\le)\ar@{=}[d]\ar@/^2.7pc/[dd]^(.5){c_{\le(H\le),\le \varepsilon_{\lbe H}}}\\
\br_{\be \varepsilon_{\lbe G}}^{\prime}\be(\le G\be/\lbe S\e)\ar[d]^{c_{\e(G\le),\e \varepsilon_{\lbe G}}}_{\simeq}\ar@{_{(}->}[u]\ar[rr]^{\br_{\!\varepsilon_{\be H}\lbe,\e\varepsilon_{\lbe G}}^{\prime}\le i}&&\br_{\! \varepsilon_{\lbe H}}^{\prime}\be(H\lbe/\lbe S\le)\ar[d]_{c_{\le(H\le),\le \varepsilon_{\lbe H}}}^{\simeq}\\
\br_{\be \rm a}^{\prime}\be(\le G\be/\lbe S\e)\ar[rr]^{\br_{\be \rm a}^{\prime} i\e=\e\varphi}&&\br_{\be \rm a}^{\prime}\be(\le H\be/\lbe S\e),
}
\]
where
\begin{equation}\label{vphi3p}
\varphi^{\e\prime}=p_{\e\br_{\be \varepsilon_{\lbe H}}^{\lbe\prime}\be(H\lbe/\lbe S\le)}\circ\zeta_{\e G,\le H}^{\e -1}\circ\br_{\be 1}^{\prime}\e\varsigma
\end{equation}
is the map \eqref{vphi2p}. By definition of $c_{\e(G\le),\e \varepsilon_{\lbe G}}$ \eqref{csig0}, the left-hand semicircle commutes. Further, the bottom square commutes by the commutativity of diagram \eqref{fty} and the equality on the bottom horizontal arrow follows from Remark \ref{gps}. Now the commutativity of diagram \eqref{fif} shows that the outer diagram commutes as well. Finally, since the map $c_{\le(H\le),\e \varepsilon_{\lbe H}}$ is an isomorphism, we conclude that the top square above commutes, i.e., the restriction of $\varphi^{\e\prime}$ to $\br_{\be \varepsilon_{\lbe G}}^{\prime}\be(\le G\be/\lbe S\e)$ is the map $\br_{\!\varepsilon_{\lbe H}\lbe,\e\varepsilon_{\lbe G}}^{\prime}\le i$ \eqref{csig1}.

\end{remark}

\subsection{The units-Picard-Brauer sequence} 
The following proposition generalizes \cite[(6.10.1), p.~43]{san}\,\footnote{In particular, the smoothness hypothesis on $Y$ in [loc.cit.] is unnecessary.}\,.

\begin{proposition}	{\rm (The units\e-\le Picard\e-\e Brauer sequence of a torsor)} \label{upb} Let $S$ be a locally noetherian normal scheme, $G$ a flat $S$-group scheme and $X\to Y$ a $G_{\le Y}$-torsor over $Y\!$, where $Y\to S$ is faithfully flat and locally of finite type. Assume that
\begin{enumerate}
\item[(i)] the structural morphism $X\to S$ has an \'etale quasi-section,
\item[(ii)] for every integer $i\geq 1$ and every \'etale and surjective morphism $T\to S$, $X_{T}$, $G^{\e i}_{\lbe T}$ and $X_{T}\be\times_{T}\be G^{\e i}_{T}$ are locally factorial, 
\item[(iii)] for every point $s\in S$ of codimension $\leq 1$, the fibers $X_{\lbe s}$ and $G_{\lbe s}$ are geometrically integral, and
\item[(iv)] for every maximal point $\eta$ of $S$, $G_{\lbe\eta}$ is $k(\eta)^{\rm s}$-rational.
\end{enumerate}
Then there exists a canonical exact sequence of abelian groups
\[
\begin{array}{rcl}
0\to \mathcal U_{\le S}(\le Y\le)\to \mathcal U_{\le S}(\lbe X)\to G^{\e *}\be\lle(S\e)\to\pic Y&\to&\pic X\to\npic\be(\lbe G\be/\be S\e)\\
&\to&\brp\, Y\to\brp X,
\end{array}
\]
where $\mathcal U_{\le S}$ is the functor \eqref{pu} and $\npic\be(\lbe G\be/\be S\e)$ is the group \eqref{npic}.
\end{proposition}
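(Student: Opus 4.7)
The plan is to apply the simplicial Lemma \ref{gsan} three times---to the abelian presheaves $\mathcal U_{\le S}$ \eqref{puf}, $\npic_{\!S}$ \eqref{pnp} and $\br_{\lbe\rm a,\e S}^{\prime}$ \eqref{braf} on $S_{\fl}$---and to splice the resulting four-term exact sequences into the eight-term sequence of the statement. Each of these presheaves vanishes on $1_{\lbe S}$ by the defining formulas \eqref{pu}, \eqref{npic} and \eqref{bra}. The hypothesis that the maps $\psi_{\e G^{\le i},\le G}$ and $\psi_{\lbe X,\le G^{\le i}}$ be isomorphisms is given by Proposition \ref{padd} when $F=\mathcal U_{\le S}$ (using hypothesis (i) and the canonical unit section of each $G^{\e i}\be\to\be S$) and by Corollary \ref{xg} when $F=\npic_{\!S}$ or $F=\br_{\lbe\rm a,\e S}^{\prime}$. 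Each application of Lemma \ref{gsan} thus produces a four-term exact sequence
\[
0\to \check H^{\e 0}(X/Y,h^{*}\lbe F\e)\to F(X\e)\to F(G\e)\to \check H^{\e 1}(X/Y,h^{*}\lbe F\e)\to 0,
\]
where $\mathcal U_{\le S}(G\e)=G^{\e *}\be(S\e)$ by Lemma \ref{pug}.

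The next task is to identify the Čech groups in terms of the data of the proposition. Since $\bg_{m}$ is an fppf sheaf and $X\be\to\be Y$ is an fppf torsor, $\check H^{\e 0}(X/Y,h^{*}\bg_{m})=\bg_{m}(Y\e)$, while the constant presheaf $Z\mapsto \bg_{m}(S\e)$ has vanishing Čech cohomology in positive degrees along $X/Y$. Combining this with the presheaf comparisons between $\bg_{m}$ and $\mathcal U_{\le S}$, between $\pic$ and $\npic_{\!S}$, and between $\brp$ and $\br_{\lbe\rm a,\e S}^{\prime}$, and with the Čech-to-derived-functor spectral sequence
\[
E_{2}^{\e p,q}=\check H^{\e p}(X/Y,h^{*}\underline H^{q}(\bg_{m}))\Rightarrow H^{\e p+q}(Y,\bg_{m})
\]
for the cover $X\be\to\be Y$, one identifies $\check H^{\e 0}(X/Y,h^{*}\mathcal U_{\le S})=\mathcal U_{\le S}(Y\e)$, $\check H^{\e 1}(X/Y,h^{*}\mathcal U_{\le S})=\krn[\pic Y\be\to\be\pic X\le]$, and analogous expressions involving $\pic X$, $\brp Y$ and $\brp X$ for the remaining $\check H^{\e i}$ of $\npic_{\!S}$ and $\br_{\lbe\rm a,\e S}^{\prime}$.

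Finally, splice the three four-term exact sequences via the natural levelwise surjections of presheaves $\bg_{m}\twoheadrightarrow\mathcal U_{\le S}$, $\pic\twoheadrightarrow\npic_{\!S}$ and $\brp\twoheadrightarrow\br_{\lbe\rm a,\e S}^{\prime}$. The main obstacle I foresee is the careful bookkeeping in this splicing step: verifying that the Čech boundary maps coming from the three separate applications of Lemma \ref{gsan}, when combined with the edge maps of the spectral sequence, assemble into the natural boundary maps $G^{\e *}\be(S\e)\be\to\be\pic Y$, $\pic X\be\to\be\npicgs$ and $\npicgs\be\to\be\brp Y$ of the stated sequence. This essentially extends Sansuc's terse proof in \cite[pp.~43--45]{san} from a base field to the locally noetherian normal base $S$, with the additivity results of Section 2 playing the role that Galois descent and Hilbert 90 play in \cite{san}.
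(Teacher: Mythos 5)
Your proposal follows essentially the same route as the paper: apply Lemma \ref{gsan} to $\mathcal U_{\le S}$ and $\nps$ (with the additivity hypotheses supplied by Proposition \ref{padd} and Corollary \ref{xg}), identify the \v{C}ech groups through the comparison sequences of presheaves and the \v{C}ech-to-derived-functor spectral sequence for the cover $X\to Y$, and splice the resulting exact sequences. The only deviation is your third application of the lemma, to $\bras$, which is valid under the stated hypotheses but superfluous for this proposition: the tail $\brp\, Y\to\brp X$ already falls out of the spectral sequence combined with the $\nps$ computation, and the paper reserves the $\bras$ application for the refinement in Corollary \ref{last}(ii).
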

\begin{proof} Let $\s C$ be the full subcategory of $S_{\fl}$ whose objects are the schematically dominant morphisms. Then $\s C$ contains $1_{\lbe S}$ and is stable under products \cite[$\textrm{IV}_{3}$, Theorem 11.10.5(ii)]{ega}. Now consider the restrictions of the abelian presheaves \eqref{puf} and \eqref{pnp} to $\s C$:
\[
\mathcal U_{\e S}\colon \s C\to\abe, (Z\to S\e)\mapsto \mathcal U_{\le S}(Z),
\]
and
\[
\nps\colon \s C\to {\mathbf{Ab}}, (Z\to S\e)\mapsto \npiczs.
\]
If $F$ denotes either of the above functors, then $F(1_{\lbe S}\e)=0$ and Proposition \ref{padd} and Corollary \ref{xg} show that the maps \eqref{fxy} $\psi_{\le G^{\le i}\lbe,\e G}\colon F(G^{\e i}\le)\oplus F(G\e)\to F(G^{\e i+1}\le)$ and $\psi_{X,\e G^{\e i}}\colon F(X)\oplus F(G^{\e i})\to F(X\be \times_{\lbe S} G^{\e i}\e)$ are isomorphisms of abelian groups for every $i\geq 0$. Thus Lemma \ref{gsan} yields 
$\check{H}^{\lle i}(X/\le Y,h^{\lbe *}\lle \mathcal U_{\e S}\e)=\check{H}^{\lle i}(X/Y,h^{\lbe *}\lle \nps\e)=0$ for every $i\geq 2$, where $h\colon Y\to S$ is the structural morphism of $Y$. Further, by Lemmas \ref{pug} and \ref{gsan}, there exist canonical exact sequences of abelian groups
\begin{equation}\label{an}
0\to \check{H}^{\lle 0}\lbe(X/\e Y,h^{\lbe *}\lle \mathcal U_{\le S}\e)\to \mathcal U_{\le S}(X\le)\to G^{\le\le*}\be(S\e)\to \check{H}^{\lle 1}\lbe(X/\e Y,h^{\lbe *}\lle\mathcal U_{\le S}\e)\to 0
\end{equation}
and
\begin{equation}\label{de}
0\to \check{H}^{\le 0}\lbe(X/Y,h^{\lbe *} \nps\e)\to \npicxs\to\npic(G/S)\to\check{H}^{\le 1}\lbe(X/Y,h^{\lbe *} \nps\e)\to 0.
\end{equation}
We now compute $\check{H}^{\lle i}(X/\le Y,h^{\lbe *} \lle \mathcal U_{\e S}\e)$ and $\check{H}^{\lle i}(X/Y,h^{\lbe *}  \nps\e)$ for $i=0$ and 1.

Let $P_{\e\bg_{m,\lle S}(S\e)}$ be the constant presheaf on $\s C_{\be/Y}$ with value $\bg_{m,\e S}(S\e)$. Further, for every integer $j\geq 0$, let $\s H^{\le j}\lbe(\bg_{m,\e Y})$ be the presheaf on $\s C_{\be/Y}$ defined by $(\e Z\to Y\e)\mapsto H^{\le j}(Z_{\et},\bg_{m,\e Z})$ . Since every object $Z\to Y$ of $\s C_{\be/Y}$ is schematically dominant over $S$, the canonical map $\bg_{m,\e S}(S\e)\to\bg_{m,\e S}(Z\e)$ is injective. Consequently, there exists a canonical exact sequence of abelian presheaves on $\s C_{\be/Y}$
\[
1\to P_{\e\bg_{m,\lle S}(S\e)}\to \s H^{\e 0}\lbe(\bg_{m,\e Y})\to h^{\lbe *} \le\mathcal U_{\le S}\to 1.
\]
The preceding sequence induces isomorphisms in $\abe$
\begin{eqnarray}
\check{H}^{\le 0}(X\be/\e Y,h^{\lbe *} \le\mathcal U_{\e S})&=&\mathcal U_{\le S}(\le Y\le)\label{ein}\\
\check{H}^{\lle 1}(X\be/\e Y,h^{\lbe *} \le\mathcal U_{\le S})&=&\check{H}^{\lle 1}\lbe(X\be/\e Y,\s H^{\e 0}\lbe(\bg_{m,\e Y}))\label{zwei}\\
\check{H}^{\lle i}(X\be/\e Y,\s H^{\e 0}\lbe(\bg_{m,\e Y}))&=&0\quad (\e i\geq 2\e).\label{drei}
\end{eqnarray}
Next we consider the contravariant functor
\[
Q_{\lbe S}\colon\s C\to\abe, (\e Z\!\overset{\! g}{\to} S\e)\mapsto\img\be[\e\pic S\overset{\!\!\pic\lbe\lbe g}{\lra}\pic Z\e].
\]
Then there exists a canonical exact sequence of abelian presheaves on $\s C_{\be/Y}$
\begin{equation}\label{nseq}
1\to h^{\lbe *} \be Q_{\lbe S}\to \s H^{\le 1}\be(\bg_{m,\e Y}\be)\to h^{\lbe *}  \nps\to 1.
\end{equation}
We now compute the groups $\check{H}^{\lle i}(X\be/\e Y,h^{\lbe *}  Q_{\lbe S})$. Recall the $Y$-morphism $\partial_{\le i+1}^{\,j}\colon  X_{\!/Y}^{\le i+2}\to  X_{\!/Y}^{\le i+1}$ \eqref{fa1}, where $i\geq 0$ and $0\leq j\leq i+1$. Then $(\le h^{\lbe *} \lbe Q_{\lbe S}\e)(\partial_{\le i+1}^{\,j})=Q_{\lbe S}(\partial_{\le i+1}^{\,j})$ is the restriction of $\pic\partial_{\le i+1}^{\,j}\colon\pic X_{\!/Y}^{\le i+1}\to\pic X_{\!/Y}^{\le i+2}$ to $\img\pic\be\big(\e h\le\circ\le p_{\e Y}^{\e [\le i\le]}\e\big)$, where $p_{\e Y}^{\e [\le i\le]}\colon X_{\!/Y}^{\le i+1}\to Y$ is the structural morphism of $X_{\!/Y}^{\le i+1}$. Since $h\circ p_{\le Y}^{\e [\le i\le]}\circ \partial_{\le i+1}^{\,j}=h\circ p_{\le Y}^{\e [\le i\le]}\circ \partial_{\le i+1}^{\,k}$ for every pair of integers $j,k$ such that $0\leq j,k\leq i+1$, we have $Q_{\lbe S}(\partial_{\le i+1}^{\,j})=Q_{\lbe S}(\partial_{\le i+1}^{\,k})$ for all $j,k$ as above. Further, since $\partial_{\le i+1}^{\,j}$ has a section, namely \eqref{deg1}, the map $\pic \partial_{\le i+1}^{\,j}$ is injective for every $j$. Thus the complex $\big\{Q_{\lbe S}\lbe\big(X_{\!/Y}^{\le i+1}\big), \partial^{\, i+1}\big\}_{\e i\e\geq\e 0}$, where $\partial^{\, i+1}=\sum_{\e j=0}^{\e i+1}\e (-1)^{\le j} Q_{\lbe S}\lbe\big(\partial_{\le i+1}^{\,j}\lbe\big)$, is quasi-isomorphic to the complex whose only term is the group
$\img\pic\be(\e h\circ \xi\le)$ placed in degree 0, where $\xi=p_{\e Y}^{\e [\le 0\le]}\colon X\to Y$. Consequently, $\check{H}^{\le i}\lbe(X/Y,h^{\lbe *} \lbe Q_{\lbe S}\e)=0$ for every $i\geq 1$ and $\check{H}^{\le 0}\lbe(X/Y,h^{\lbe *} \lbe Q_{\lbe S}\e)=\img\pic\be(\e h\circ \xi\le)$. Now \eqref{nseq} yields
isomorphisms
\begin{equation}\label{vier}
\check{H}^{\le i}\lbe(X/Y,\s H^{\le 1}\be(\bg_{m,\e Y}\be)\e)=\check{H}^{\le i}\lbe(X/Y,h^{\lbe *}  \nps)\quad (\e i\geq 1\e)
\end{equation}
and an exact sequence
\begin{equation}\label{funf}
0\to \img\pic\be(\le h\!\circ\be \xi\e)\to \check{H}^{\le 0}\lbe(X/Y,\s H^{\le 1}\be(\bg_{m,\e Y}\be)\e)\to \check{H}^{\le 0}\lbe(X/Y,h^{\lbe *}  \nps)\to 0,
\end{equation}
where
\[
\check{H}^{\le 0}\lbe(X/Y,\s H^{\le 1}\be(\bg_{m,\e Y}\be)\e)=\krn[\e\pic\partial_{\le 1}^{\e 0}\!-\!\pic\partial_{\le 1}^{\e 1}\colon \pic X \to\pic(X\!\times_{Y}\!\be X)].
\]
Combining \eqref{an}-\eqref{zwei} and \eqref{vier}-\eqref{funf}, we obtain the exact sequences
\begin{equation}\label{sechs}
0\to \mathcal U_{\le S}(\le Y\le)\to \mathcal U_{\le S}(\be X)\to G^{\e *}\be(S\e)\to\check{H}^{\le 1}(X\be/\e Y,\s H^{\le 0}\lbe(\bg_{m,\e Y}))\to 0
\end{equation}
and
\begin{equation}\label{sieben}
0\to \pic X/\check{H}^{\le 0}\lbe(X/Y,\s H^{\le 1}\be(\bg_{m,\e Y}\be)\e)\to\npic(G/S\e)\to\check{H}^{\le 1}\lbe(X/Y,\s H^{\le 1}\be(\bg_{m,\e Y}\be)\e)\to 0.
\end{equation}
We now endow $\s C_{\be/Y}$ with the fppf topology and consider the spectral sequence for \v{C}ech cohomology associated to the fppf covering $\xi\colon X\to Y$ and the abelian sheaf $\bg_{m, Y}$ on $\s C_{\be/Y}$:
\[
\check{H}^{\le i}(X\be/\e Y,\s H^{\le j}\lbe(\bg_{m,\e Y}))\implies H^{\le i+j}(Y_{\fl},\bg_{m,\e Y}\be)=H^{\le i+j}(Y_{\et},\bg_{m,\e Y}\be).
\]
By \eqref{drei} and \cite[Case $E^{\e k}$ for $k=0$, p.~329]{ce}, the above spectral sequence induces exact sequences of abelian groups
\begin{equation}\label{deux}
0\to \check{H}^{\le 1}(X\be/\e Y,\s H^{\e 0}\lbe(\bg_{m,\e Y}))\to\pic Y\to\check{H}^{\le 0}(X\be/\e Y,\s H^{\le 1}\lbe(\bg_{m,\e Y}))\to 0
\end{equation}
and
\begin{equation}\label{trois}
0\to  \check{H}^{\le 1}\lbe(X\be/\e Y,\s H^{\le 1}\be(\bg_{m,\e Y}\be))\to \brp\, Y\to \check{H}^{\e 0}(X\be/\e Y,\s H^{\le 2}\lbe(\bg_{m,\e Y}))\to 0,
\end{equation}
where
\[
\check{H}^{\e 0}(X\be/\e Y,\s H^{\le 2}\lbe(\bg_{m,\e Y}))
=\krn[\e\brp \e\partial_{\e 1}^{\e 0}\!-\!\brp \e\partial_{\e 1}^{\e 1}\colon \brp X\to\brp(X\!\times_{Y}\!\be X\le)].
\]
Note that, since $\beta_{1}\colon X\times_{\lbe S} G\to X\!\times_{Y}\! X$ \eqref{no} is an isomorphism, we have
\[
\check{H}^{\e 0}(X\be/\e Y,\s H^{\le 2}\lbe(\bg_{m,\e Y}))=\krn[\e \brp(\partial^{\e 0}_{\le 1}\be\circ\be\beta_{1})\!-\!\brp (\partial_{\e 1}^{\e 1}\be\circ\be\beta_{1})\,]=\krn[\e \brp \varsigma-\brp p_{X}\e]
\]
by \eqref{b1} and \eqref{b2}. Thus, if
\begin{equation}\label{zeta}
\phi=\brp \varsigma\!-\!\brp p_{X}\colon \brp X\to\brp(X\!\times_{S}\be G\e),
\end{equation}
then \eqref{sechs}-\eqref{trois} yield exact sequences
\begin{equation}\label{sept}
0\to \mathcal U_{\le S}(\le Y\le)\to \mathcal U_{\le S}(\be X)\to G^{\le *}\be(S\e)\to\pic Y\to\check{H}^{\le 0}(X\be/\e Y,\s H^{\le 1}\lbe(\bg_{m,\e Y}))\to 0
\end{equation}
and
\begin{equation}\label{huit}
0\to \pic X/\check{H}^{\le 0}\lbe(X/Y,\s H^{\le 1}\be(\bg_{m,\e Y}\be)\e)\to\npic\be(G/S)\to\brp\, Y\to \brp X\overset{\!\phi}{\to}\brp(X\!\times_{\lbe S}\be G\e).
\end{equation}
The sequences \eqref{sept} and \eqref{huit} can now be assembled to yield the exact sequence of abelian groups
\begin{equation}\label{mg}
\begin{array}{rcl}
0\to \mathcal U_{\le S}(\le Y\le)\to \mathcal U_{\le S}(\be X)\to G^{\le *}\be(S\e)&\to&\pic Y\to\pic X\to\npic\be(\lbe G\be/\be S\e)\\
&\to&\brp\, Y\to\brp X\overset{\!\phi}{\to}\brp(X\!\times_{\lbe S}\be G\e),
\end{array}
\end{equation}
where $\phi$ is the map \eqref{zeta}. The proposition follows.
\end{proof}

\begin{remarks}\label{tter}\indent
\begin{enumerate}
\item[(a)] The sequence \eqref{mg} (and therefore also the sequence of the proposition) is functorial in the following sense. Let $G^{\e\prime}$ be a flat $S$-group scheme and let 
$X^{\le\prime}\to Y^{\le\prime}$ be a (right) $G^{\e\prime}_{\le Y^{\prime}}\e$-\e torsor over $Y^{\le\prime}\!$ such that the $S$-schemes $G^{\e\prime}, X^{\le\prime}, Y^{\le\prime}$ satisfy all the conditions of the proposition. Assume, furthermore, that there exist a morphism of $S$-group schemes $G\to G^{\e\prime}$ and morphisms of $S$-schemes $X\to X^{\le\prime}$ and $Y\to Y^{\le\prime}$ such that the following diagram, whose horizontal arrows are the corresponding group actions, commutes
\[
\xymatrix{X\times_{Y}G_{Y}\ar[d]\ar[r]& X\ar[d]\\
X^{\le\prime}\!\times_{Y^{\lle\prime}}\be G^{\e\prime}_{Y^{\prime}}\ar[r]& X^{\le\prime}.
}
\]
Then the following diagram, whose top and bottom rows are, respectively, the exact sequences \eqref{mg} associated to the triples $(X^{\le\prime}, Y^{\le\prime},G^{\e\prime}\e)$ and $(X,Y,G\e)$, commutes:
\[
\xymatrix{\dots\ar[r]&\pic Y^{\le\prime}\ar[d]\ar[r]&\pic X^{\le\prime}\ar[d]\ar[r]&\npic\be(\lbe G^{\e\prime}\be/\be S\e)\ar[d]\ar[r]&\brp\, Y^{\le\prime}\ar[d]\ar[r]&\dots\\
\dots\ar[r]&\pic Y\ar[r]&\pic X\ar[r]&\npic\be(\lbe G\be/\be S\e)\ar[r]&\brp\, Y\ar[r]&\dots.
}
\]
We do not carry out here the (lengthy) verification of this fact. We do, however, include below (after Remark \ref{ray}) a partial verification of the indicated functoriality in a particular case that will be relevant in the next section.

\item[(b)] The homomorphism of abelian groups $d\colon G^{\e *}\be\lle(S\e)\to\pic Y$ in \eqref{mg} is defined as follows: if $\chi\in G^{\e *}\be\lle(S\e)$, i.e., $\chi\colon G\to\bg_{m,\le S}$ is a morphism of $S$-group schemes, then
\[
d(\e\chi\e)=\chi_{\e Y}^{(1)}\be(\lle[\e X\e]\lle)=[\e X\!\wedge^{\lbe G_{Y}\be,\e\chi_{Y}}\!\bg_{m,\e Y,\e r}\e],
\]
where $\chi_{\e Y}^{(1)}\colon H^{\le 1}\lbe(\e Y_{\fl},G_{Y}\lbe)\to H^{\le 1}\lbe(\e Y_{\fl},\bg_{m,\le Y})=\pic Y$ is the map \eqref{u1} induced by $\chi_{Y}$ and $[\e X\e]$ is the class of the
$G_{Y}$-torsor $X\!\to Y\be$ in $H^{\le 1}\lbe(\e Y_{\fl},G_{Y}\lbe)$.

\item[(c)] In the setting of the proposition, assume in addition that $S$ is noetherian and (for simplicity) irreducible with function field $K$. Assume also that $G$ is of finite type over $S$ and, for every point $s$ of $S$ of codimension $1$, $Y_{\lbe s}$ is integral. By (a), \cite[Corollary 5.3]{ga4}, \cite[Proposition VII.1.3(4), p.~104]{ray} and the proposition, there exists a canonical exact and commutative diagram of abelian groups
\[
\xymatrix{&\pic S\ar[d]\ar@{=}[r]&\pic S\ar[d]&&&\\
G^{\e *}\be(S)\ar[r]\ar[d]^{\simeq}&\pic Y\ar@{->>}[d]\ar[r]^(.45){\pic \xi}&\pic X\ar@{->>}[d]\ar[r]&\npic\be(\lbe G\be/\be S\e)\ar[d]^{\simeq}\ar[r]&\brp\, Y\ar[d]\ar[r]^{\br^{\lle\prime} \xi}&\brp X\ar[d]\\
G^{\e *}\be(K)\ar[r]&\pic Y_{K}\ar[r]^(.45){\pic \xi_{K}}&\pic X_{K}\ar[r]&\pic G_{K}\ar[r]&\brp\, Y_{K}\ar[r]^{\brp \xi_{K}}&\brp\e X_{K}.
}
\]
It follows from the diagram that the canonical map $\krn \br^{\lle\prime} \xi\to\krn\br^{\lle\prime} \xi_{\le K}$ is an isomorphism. Further, if the canonical map $\mathcal U_{\le S}(\be X)\to \mathcal U_{K}\lbe(\be X_{\be K})$ is surjective, then $\krn \pic \xi\to\krn\pic \xi_{\le K}$ is an isomorphism as well. Thus, in this case, a substantial part of the sequence of the proposition is {\it essentially equivalent} to the corresponding part of the sequence over the field $K$. See also Remark \ref{lastb}.
\end{enumerate}
\end{remarks}

\smallskip

Assertion (ii) in the following statement generalizes \cite[(6.10.3), p.~43]{san}\,\footnote{\e Since it seems that the proof of the indicated result in \cite[p.~45, lines 21-26]{san} is incorrect, we provide a modified argument in the setting of this paper.}.

\begin{corollary}\label{last} Let the notation and hypotheses be those of the proposition. \begin{enumerate}
\item[(i)] There exists a canonical exact sequence of \'etale sheaves on $S$
\[
\begin{array}{rcl}
1\to \uys\to \uxs\to G^{\e *}&\to&\picys\to\picxs\to\picgs\\
&\to&\brys\to\brxs\to\brp_{\!\!X\be\times_{\be S}\le G\lbe/\lbe S}
\end{array}
\]
\item[(ii)] If the morphism $\picxs\to\picgs$ in {\rm (i)} is surjective, then there exists a canonical exact sequence of abelian groups
\[
\begin{array}{rcl}
0\to \mathcal U_{\le S}(\le Y\le)\to \mathcal U_{\le S}(\lbe X)\to G^{\e *}\be\lle(S\e)&\to&\pic Y\to\pic X\to\npic\be(\lbe G\be/\be S\e)\\
&\to&\broys\to\bro\overset{\!\varphi^{\le\prime}}{\to} \br_{\be \varepsilon}^{\prime}(G\lbe/\lbe S\le),
\end{array}
\]
where $\varepsilon$ is the unit section of $G\to S$ and $\varphi^{\e\prime}$ is the map \eqref{vphi2p}.
\end{enumerate}
\end{corollary}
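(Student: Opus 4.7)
The plan for (i) is to apply Proposition \ref{upb} (extended one step further by the map $\phi$ of \eqref{zeta}, as in \eqref{mg}) to $X_T\to Y_T$ over every étale surjective $T\to S$, and then sheafify. One first verifies that the hypotheses descend under étale base change: local noetherianity and normality are preserved; condition (ii) of the proposition already quantifies over all étale surjective extensions of $S$; geometric integrality of codimension-$\leq 1$ fibers is preserved because étale morphisms preserve codimension and field extensions preserve geometric integrality; and $k^{\mathrm s}$-rationality is stable under field extension. Since sheafification is exact, one obtains an exact sequence of sheaves on $S_{\et}$ upon identifying each sheafification. The presheaves $T\mapsto\mathcal U_T(Z_T)$, $T\mapsto\pic Z_T$, and $T\mapsto\brp Z_T$ sheafify to $\mathrm U_{Z/S}$ (by Remark \ref{too}), $\mathrm{Pic}_{Z/S}$, and $\mathrm{Br}'_{Z/S}$, respectively, by construction. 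The only identification requiring comment is that $T\mapsto\npic(G_T/T)$ sheafifies to $\picgs$: at a strict henselization $\widetilde S$ one has $\pic\widetilde S=0$ and $\pic G_{\widetilde S}=\picgs(\widetilde S)$ by Remark \ref{sl}, so $\npic(G_{\widetilde S}/\widetilde S)=\picgs(\widetilde S)$.

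For (ii), the strategy is to refine the last three nontrivial terms of the group-level sequence \eqref{mg} under the hypothesis that $\picxs\to\picgs$ is surjective as sheaves. Combined with the exactness in (i), this surjectivity forces $\picgs\to\brys$ to vanish as a sheaf morphism; hence $\brys\to\brxs$ is a sheaf monomorphism, and $\brys(S)\to\brxs(S)$ is injective by left exactness of global sections. Using naturality of the canonical maps $\brp Z\to\mathrm{Br}'_{Z/S}(S)$ and $\npic(G/S)\to\picgs(S)$, a diagram chase then shows: (a) the image of $\npic(G/S)\to\brp Y$ is contained in $\broys$; and (b) any $a\in\bro$ lifting via $\brp\xi$ (where $\xi\colon X\to Y$) to some $b\in\brp Y$ already has $b\in\broys$. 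These two observations transfer exactness at $\npic(G/S)$ and $\brp Y$ in \eqref{mg} to exactness at $\npic(G/S)$ and $\broys$ in the desired sequence.

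To attach $\varphi'$ at the final position and establish exactness at $\bro$, I would exploit the identities \eqref{ana1} and \eqref{c00}. Applied with $Y=G$, \eqref{c00} gives $\mathrm{Br}_1'p_X(a)=\zeta_{X,G}(a,0)$, while \eqref{ana1} gives $\mathrm{Br}_1'\varsigma(a)=\zeta_{X,G}(a,\varphi'(a))$; subtracting yields $\phi(a)=\zeta_{X,G}(0,\varphi'(a))$ for every $a\in\bro$, so the injectivity of $\zeta_{X,G}$ (an isomorphism by Remark \ref{comm}(a), since $\varepsilon$ is a section of $G\to S$) implies $\ker\varphi'=\ker(\phi|_{\bro})$. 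Combining this with $\ker\phi=\mathrm{image}(\brp\xi)$ from \eqref{mg} and observation (b) above produces $\ker\varphi'=\mathrm{image}(\mathrm{Br}_1'\xi)$, completing (ii). The main obstacle I foresee is the bookkeeping around the sheafification of the $\npic$-term in (i), together with the careful naturality checks required to pass from $\brp$ to $\mathrm{Br}_1'$ in (ii); once these are in place, the rest reduces to formal manipulation of \eqref{mg}, \eqref{ana1}, and \eqref{c00}.
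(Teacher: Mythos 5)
Your proposal is correct and follows essentially the same route as the paper: part (i) by applying the group-level sequence \eqref{mg} over every \'etale surjective $T\to S$ and sheafifying, and part (ii) by using the surjectivity hypothesis to make $\brys(S)\to\brxs(S)$ injective, chasing the resulting diagram to replace $\brp$ by $\br_{\lbe 1}^{\le\prime}$, and then identifying $\krn\phi_{1}$ with $\krn\varphi^{\e\prime}$ via the isomorphism $\zeta_{\le X,\le G}$. Your derivation of $\phi_{1}(a)=\zeta_{\le X,\le G}(0,\varphi^{\e\prime}(a))$ from \eqref{ana1} and \eqref{c00} is just a repackaging of the paper's computation of the two components of $\zeta_{\le X,\le G}^{\e-1}\circ\phi_{1}$ via \eqref{toi}, \eqref{toi2} and \eqref{ana2}.
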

\begin{proof} If $T\to S$ is an \'etale and surjective morphism of schemes, then $T, G_{T}, X_{T}$ and $ Y_{T}$ satisfy all the hypotheses of the proposition. Thus there exists an exact sequence of abelian groups \eqref{mg}
\begin{equation}\label{mgs}
\begin{array}{rcl}
0\to \mathcal U_{\e Y/S}(\le T\le)\to \mathcal U_{\e X/S}(\le T\le)\to G^{\le *}\be(T\e)&\to&\pic Y_{T}\to\pic X_{T} \to\npic\be(\lbe G_{T}\be/\e T\e)\\
&\to&\brp\, Y_{T}\to\brp X_{T}\to\brp(X_{T}\!\times_{\lbe T}\be G_{T}\e).
\end{array}
\end{equation}
The above sequence is the sequence of $T$-sections of a complex of abelian presheaves on $S_{\et}$ such that the corresponding complex of associated sheaves on $S_{\et}$ is exact, by the exactness of \eqref{mgs} for every $T\to S$ as above. Assertion (i) now follows, noting that $\uys$ and $\uxs$ are the \'etale sheaves on $S$ associated to $\mathcal U_{\e Y/S}$ and $\mathcal U_{\e X/S}$ (respectively) by Remark \ref{too}, and $\picgs$ is the \'etale sheaf on $S$ associated to the presheaf $T\mapsto\npic\be(G_{T}\lbe/\e T\e)$ by \cite[Definition 9.2.2, p.~252]{klei}. If $\picxs\to\picgs$ is surjective, then (i) yields the exactness of the right-hand column in the following commutative diagram of abelian groups with exact rows:
\[
\xymatrix{1\ar[r] & \broys\ar[r]\ar[d] & \brp\, Y\ar[r]\ar[d] &\brys(S\e)\ar@{^{(}->}[d] \\
1\ar[r] & \bro\ar[r]\ar[d]^{\phi_{\lle 1}}& \brp X \ar[r]\ar[d]^{\phi} & \brxs(S\e)\ar[d]\\
1\ar[r]&\br_{\be 1}^{\e\prime}(X\!\times_{\be S}\be G\lbe/\lbe S)\ar[r]&\brp(X\!
\times_{\be S}\be G\e)\ar[r]&\brp_{\!\!X\lbe\times_{\be S}G\lbe/\lbe S}(S\e),}
\]
where $\phi$ is the map \eqref{zeta} and $\phi_{\lle 1}=\br_{\be 1}^{\prime}\varsigma-\br_{\be 1}^{\prime}\e p_{\lbe X}$ is induced by $\phi$. 
By the exactness of \eqref{mg}, the middle column in the above diagram is also exact. It now follows that the left-hand column is exact and $\krn [\e\brp\, Y\to\brp X\e]=\krn[\e\broys\to\bro\e]$. Thus \eqref{mg} yields the exact sequence
\[
\begin{array}{rcl}
0\to \mathcal U_{\le S}(\le Y\le)\to \mathcal U_{\le S}(\be X)\to G^{\e *}\be(S\e)&\to&\pic Y\to\pic X\to\npic\be(\lbe G\be/\be S\e)\\
&\to&\broys\to\bro\overset{\!\be\phi_{1}}{\to}\br_{\be 1}^{\e\prime}(X\!\times_{\be S}\be G\lbe/\lbe S).
\end{array}
\]
Now, since we work under the hypotheses of Proposition \ref{upb}, which are the same as those of Corollary \ref{xg}, the proof of Corollary \ref{xg} and Remark \ref{comm}(a) together show that the map $\zeta_{\le X,\le G}\colon \bro\oplus\br_{\be \varepsilon}^{\prime}(G\be/\lbe S\le)\to\br_{\be 1}^{\e\prime}(X\be\times_{\be S}\le G\lbe/\lbe S\le)$ \eqref{c0} is an isomorphism of abelian groups. Thus the kernel of $\phi_{1}$ is the same as the kernel of the composite map
\begin{equation}\label{mag}
\bro\overset{\!\be\phi_{1}}{\to}\br_{\be 1}^{\e\prime}(X\!\times_{\be S}\be G\lbe/\lbe S)\overset{\!\zeta_{X,\le G}^{-1}}{\lra}\bro\oplus\br_{\be \varepsilon}^{\prime}(G\be/\lbe S\le).
\end{equation}
Now, by \eqref{toi}, \eqref{toi2} and \eqref{ana2}, we have
\[
\begin{array}{rcl}
p_{\e\br_{\be 1}^{\prime}(X\be/S\e)}\circ\zeta_{\le X,\le G}^{-1}\circ\phi_{\le 1}&=&p_{\e\br_{\be 1}^{\prime}(X\be/\lbe S\e)}\circ\zeta_{X,\le G}^{-1}\circ \br_{\be 1}^{\prime}\e\varsigma-
p_{\e\br_{\be 1}^{\prime}(X\be/\lbe S\e)}\circ\zeta_{\le X,\le G}^{-1}\circ \br_{\be 1}^{\prime}\e p_{X}\\
&=&1_{\br_{\be 1}^{\prime}(X\be/S\e)}-1_{\br_{\be 1}^{\prime}(X\be/S\e)}=0
\end{array}
\]
and
\[
\begin{array}{rcl}
p_{\e\br_{\be \varepsilon}^{\prime}(G\lbe/S\e)}\circ\zeta_{X,\le G}^{-1}\circ\phi_{\le 1}&=&p_{\e\br_{\be \varepsilon}^{\prime}(G\lbe/S\e)}\circ\zeta_{\le X,\le G}^{-1}\circ \br_{\be 1}^{\prime}\e\varsigma-
p_{\e\br_{\be \varepsilon}^{\prime}(G\lbe/S\e)}\circ\zeta_{\le X,\le G}^{-1}\circ \br_{\be 1}^{\prime}\e p_{X}\\
&=&\varphi^{\e\prime}-0=\varphi^{\e\prime},
\end{array}
\]	
where $\varphi^{\e\prime}\colon\bro\to\br_{\be \varepsilon}^{\prime}(G\be/\lbe S\le)$ is the map \eqref{vphi2p}. Thus the kernel of the composition \eqref{mag} is the kernel of $\varphi^{\e\prime}$, which completes the proof.
\end{proof}

\begin{corollary}\label{kcor} Let $1\to H\to G\to F\to 1$ be an exact sequence of smooth $S$-group schemes with connected fibers at all points of $S$ of codimension $\leq 1$, where $S$ is a locally noetherian regular scheme. Assume that, for every maximal point $\eta$ of $S$, $H_{\lbe\eta}$ is $k(\eta)^{\rm s}$-rational. Then the given sequence induces
\begin{enumerate}
\item[(i)] an exact sequence of abelian groups
\[
\begin{array}{rcl}
0\to F^{\le *}\be(\lbe S\le)\to G^{\e *}\be(\lbe S\le)\to H^{*}\be(\lbe S\le)\to\pic F&\to&\pic G \to\npic\be(\lbe H\be/\be S\e)\\
&\to&\brp\le F\to\brp\le G,
\end{array}
\]
\item[(ii)] an exact sequence of \'etale sheaves on $S$
\[
\begin{array}{rcl}
0\to F^{\e *}\to G^{\e *}\to H^{\le *}&\to&\pic_{\! F\lbe/\lbe S}\to\picgs\to \pic_{\! H\lbe/\lbe S}\\
&\to&\brp_{\!\!F/S}\to \brp_{\be\!G/S} \to \brp_{\be\!G\times_{S}H/S}
\end{array}
\]
and,
\item[(iii)] if the morphism $\picgs\to \pic_{\! H\lbe/\lbe S}$ in {\rm (ii)} is surjective, an exact sequence of abelian groups
\[
\begin{array}{rcl}
0\to F^{\le *}\be(\lbe S\le)\to G^{\e *}\be(\lbe S\le)\to H^{*}\be(\lbe S\le)&\to&\pic F\to\pic G \to\npic\be(\lbe G\be/\be S\e)\\
&\to&\br_{\be 1}^{\prime}(F\be/\lbe S\le)\to\br_{\be 1}^{\prime}(G\lbe/\lbe S\le )\overset{\!\varphi^{\e\prime}}{\to}\br_{\be \varepsilon}^{\prime}(H\be/\lbe S\le),
\end{array}
\]
where $\varepsilon$ denotes the unit section of $H\to S$ and $\varphi^{\e\prime}$ is the map \eqref{vphi3p}.
\end{enumerate}
\end{corollary}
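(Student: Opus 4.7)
The exact sequence $1\to H\overset{i}{\to} G\to F\to 1$ makes $G$ into a (right) $H_{\lbe F}\e$-\e torsor over $F$ via $(g,h)\mapsto g\le i(h)$ (Example \ref{2ex}). The plan is to reduce all three assertions to Proposition \ref{upb} and Corollary \ref{last} applied to the triple $(X,Y,G_{\text{Prop}})=(G,F,H)$; the bulk of the work is checking the hypotheses.

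I first verify the four hypotheses of Proposition \ref{upb} for this triple. For (i), the unit section $\varepsilon_{\lbe G}\colon S\to G$ is a section of $G\to S$, so it is an \'etale quasi-section. For (ii), since $S$ is locally noetherian regular and $G,H,F$ are smooth over $S$, all of $G,H,F$ and their products are locally factorial by Remark \ref{comm}(b); this property is preserved on passage to an \'etale surjective $T\to S$ because $T$ is then regular. For (iii), at every point $s\in S$ of codimension $\le 1$ the fibers $G_{\lbe s}, H_{\lbe s}, F_{\lbe s}$ are smooth, connected, and possess a rational point (the unit section), hence are geometrically integral. For (iv), the rationality of $H_{\lbe\eta}$ is a hypothesis. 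I also use that $F\to S$ is faithfully flat and locally of finite type: $G\to F$ is faithfully flat with smooth kernel $H$, so $F\to S$ inherits smoothness from $G\to S$ via the factorization $G\to F\to S$ (cf.\ \cite[${\rm VI}_{B}$, 9.2]{sga3}).

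With the hypotheses verified, (i) is now immediate from Proposition \ref{upb} after identifying $\mathcal U_{\le S}(F\e)= F^{\e *}\be(S\e)$ and $\mathcal U_{\le S}(G\e)=G^{\e *}\be(S\e)$ via Lemma \ref{pug} (applicable because $F,G$ are smooth $S$-group schemes with connected maximal fibers). Similarly, (ii) is Corollary \ref{last}(i) applied to the triple $(G,F,H)$, where the sheaves $\uys={\rm U}_{\lbe F\lbe/\lbe S}$ and $\uxs={\rm U}_{\be G\lbe/\lbe S}$ appearing there are identified with $F^{\e *}$ and $G^{\e *}$ by Lemma \ref{gros}, and the character sheaf ``$G^{\e *}$'' of Corollary \ref{last}(i) is now the character sheaf $H^{\le *}$ of the structure group $H$.

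For (iii), I apply Corollary \ref{last}(ii). Its hypothesis that $\picxs\to\picgs$ be surjective becomes, in our setting, the assumed surjectivity of $\picgs\to\pic_{\! H/S}$. The resulting sequence ends with $\br_{\be 1}^{\prime}(G\lbe/\lbe S\le)\overset{\varphi^{\e\prime}}{\to}\br_{\be \varepsilon}^{\prime}(H\be/\lbe S\le)$, where $\varepsilon$ is the unit section of $H$ and $\varphi^{\e\prime}$ is the map \eqref{vphi2p} produced by Corollary \ref{last}(ii); by Remark \ref{gps2} its restriction to $\br_{\be \varepsilon_{\lbe G}}^{\prime}\be(\le G\be/\lbe S\e)$ is the expected map induced by $i$, confirming that this $\varphi^{\e\prime}$ coincides with the one denoted \eqref{vphi3p} in the corollary. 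The only nontrivial ingredient is the verification of the hypotheses for Proposition \ref{upb} above; everything else is a direct citation of the earlier results together with the two identifications ${\rm U}_{\be K\lbe/\lbe S}\simeq K^{\e *}$ for $K=F,G$.
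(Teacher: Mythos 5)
Your proposal is correct and follows essentially the same route as the paper: view $G\to F$ as an $H_F$-torsor (Example \ref{2ex}), check the hypotheses of Proposition \ref{upb} (the paper delegates this to Remark \ref{comm}(b), while you spell it out, which is harmless and if anything more careful), and then read off (i), (ii), (iii) from Proposition \ref{upb} and Corollary \ref{last} together with the identifications of Lemmas \ref{gros} and \ref{pug}. No gaps.
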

\begin{proof} As noted in Example \ref{2ex}\e, the $F$-scheme $G$ is an $H_{F}\e$-\e torsor over $F$. Thus assertion (i) follows by applying Proposition \ref{upb} to $G\to F$, which is justified since all the conditions of that proposition hold true by Remark \ref{comm}(b). Assertion (ii) follows from Corollary \ref{last}(i) and Lemma \ref{gros}. Assertion (iii) follows from Corollary \ref{last}(ii) and Lemma \ref{pug}.
\end{proof}

\begin{remark}\label{ray} Regarding assertion (i) of the preceding corollary, Raynaud constructed in \cite[Proposition VII.1.5, pp.~106-107]{ray} a canonical complex of abelian groups $G^{\le\lle *}\be(\lbe S\le)\to H^{*}\be(\lbe S\le)\to\pic F\to\pic G$, where $S$ is any scheme, $G$ is an $S$-group scheme and $H$ is a subgroup scheme of $G$ such that the quotient fpqc sheaf $G\be/\be H$ is represented by an $S$-scheme $F$. If the maximal fibers of $H$ are not smooth, then the preceding complex may not be exact.
\end{remark}

The maps $H^{*}\lbe\lbe(\lbe S\e)\to\pic F$ and $H^{\le*}\!\!\to \pic_{\! F\be/\lbe S}$ in the sequences of Corollary \ref{kcor}, (i) and (ii), can be defined for {\it any} exact sequence of $S$-group schemes
\begin{equation}\label{short}
1\to H\to G\to F\to 1.
\end{equation}
The first map $d\colon H^{*}\lbe\lbe(\lbe S\e)\to\pic F$ is given by
\begin{equation}\label{dmap}
d(\e \chi)=\chi_{ F}^{(1)}\be(\lle[\e G\,]\lle)=[\e G\be\wedge^{\be H_{\lbe F}\lbe,\e \chi_{F}}\! \bg_{m,\e F,\e r}\le],
\end{equation}
where $\chi_{ F}^{(1)}\colon H^{\le 1}\lbe(\e F_{\fl},H_{F}\lbe)\to H^{\le 1}\lbe(\e F_{\fl},\bg_{m,\le F})=\pic F$ is the map \eqref{u1} induced by $\chi_{F}$ and $[\e G\e]$ is the class  of the
$H_{F}\e$-\e torsor $G\to F$ in $H^{\le 1}\lbe(\e F_{\fl},H_{F}\lbe)$. See Remark \ref{tter}(b). The map \eqref{dmap} is compatible with pullbacks\,\footnote{\e This fact is a particular case of the functoriality assertion in Remark \ref{tter}(a).}, i.e., if $g\colon F^{\e\prime}\to F$ is a morphism of $S$-group schemes, then the exact sequence $1\to H\to G_{\be F^{\le\prime}}\to F^{\e\prime}\to 1$ induced by \eqref{short} defines a map $d^{\e\prime}\colon H^{*}\be(\lbe S\e)\to\pic F^{\e\prime}$, namely $d^{\e\prime}(\e \chi)=\chi_{\lbe F^{\e\prime}}^{(1)}\be(\lle[\e G_{\be F^{\le\prime}}\e]\lle)=[\e G_{\be F^{\e\prime}}\be\wedge^{\be H_{\be F^{\le\prime}}\lbe,\e \chi_{\lbe F^{\le\prime}}}\! \bg_{m,\e F^{\e\prime},\e r}\le]$, such that the following diagram commutes
\begin{equation}\label{tia0}
\xymatrix{ 
H^{\le*}\lbe(\lbe S\le)\ar[dr]_(.45){d^{\e\prime}}\ar[r]^{d}& \pic F\ar[d]^(.45){\pic g}\\
&\pic F^{\e\prime}.
}
\end{equation}
See \eqref{bcf} and \eqref{pmap}. Now, for every morphism of schemes $T\to S$, there exist maps $H^{*}\be( T\e)\to\pic F_{ T}$ defined similarly to \eqref{dmap} which induce a morphism of abelian presheaves on $(\le{\rm Sch}/S\e)$. This morphism of presheaves induces, in turn, a morphism of the associated \'etale sheaves on $S$
\begin{equation}\label{vcm3}
\bm{d}\colon H^{\le*}\!\!\to \pic_{\! F\be/\lbe S},
\end{equation}
which is the map in the sequence of Corollary \ref{kcor}(ii). If $S$ is strictly local, then $\bm{d}(\lle S\e)$ agrees with the map $d$ \eqref{dmap} (see Remark \ref{sl} for the equality $\pic_{\! F\be/\lbe S}(\lbe S\e)=\pic F\e$). Further, there exist commutative diagrams analogous to \eqref{tia0} when $S$ is replaced by any $T$ as above. We conclude that there exists a canonical commutative diagram of \'etale sheaves on $S$:
\begin{equation}\label{tia1}
\xymatrix{ 
H^{\le*}\ar[dr]_(.4){\bm{d}^{\le\prime}}\ar[r]^(.45){\bm{d}}& \pic_{\! F\be/\lbe S}\ar[d]^(.45){\pic_{\!S}(\e g)}\\
&\pic_{\! F^{\le\prime}\be/\lbe S}.
}
\end{equation}

\section{Reductive group schemes}

In this section we establish the main theorem of the paper (Theorem \ref{ct} below), which generalizes the main theorem of \cite{bvh1}.

\smallskip

Let $S$ be a (non-empty) scheme. Henceforth, all $S$-group schemes below are tacitly assumed to be of finite type over $S$.

If $M$ is an $S$-group scheme of multiplicative type, then $M^{\e*}=\underline{\hom}_{\, S\text{-gr}}(M,\bg_{m,S})$ \eqref{gc} and 
$M_{*}=\underline{\hom}_{\, S\text{-gr}}(\bg_{m,S},M\e)$ 
are represented by (finitely generated) twisted constant $S$-group
schemes \cite[X, Corollary 4.5 and Theorem 5.6]{sga3}. If $f\colon M\to N$ is a morphism of $S$-group schemes, we will write $f^{(*)}\colon N^{*}\to M^{*}$ and $f_{(*)}\colon M_{*}\to N_{*}$ for the canonical morphisms induced by $f$.

\smallskip

An $S$-group scheme $G$ is called
\emph{reductive} (respectively, \emph{semisimple}, \emph{simply connected}) if $G$ is affine and smooth over $S$ and its
geometric fibers are \emph{connected} reductive (respectively, semisimple,
simply connected) algebraic groups. By convention, the trivial $S$-group scheme is simply connected. If $G$ is a reductive $S$-group scheme, we will write $\pi_{\lbe G}\colon G\to S$ and $\varepsilon_{G}\colon S\to G$ for the structural morphism and unit section of $G$, respectively. Further, $\rad(G\e)$ will denote the radical of $G$, i.e., the maximal torus in the center of $G$. Now recall that the derived group $G^{\der}$ of $G$ is a normal and semisimple $S$-subgroup scheme of $G$ such that $G^{\tor}=G/G^{\der}$ is the largest quotient of $G$ that is an $S$-torus. If $G$ is a semisimple $S$-group scheme, there exists a simply-connected $S$-group scheme $\Gtil$ and a central isogeny $\varphi\colon\Gtil\to G$. The $S$-group scheme $\Gtil$ is called the {\it simply-connected central cover} of $\,G$ and the group $\mu=\krn\varphi$ is called the {\it fundamental group} of $G$. See \cite[p.~1161]{ga2} for more details and relevant references. If $G$ is an arbitrary reductive $S$-group scheme, $\Gtil$ and $\mu\e$ will denote the simply connected central cover and fundamental group of $G^{\der}$, respectively. Note that $G^{\der}$ is simply connected if, and only if, $\mu=0$. The $S$-group scheme $\mu$ is a (possibly non-smooth) finite subgroup scheme of $\Gtil$ of multiplicative type. Consequently, $\mu^{*}$ is a finite and \'etale $S$-group scheme.

For lack of adequate references, we now present proofs of the following ``well-known" facts over a field.

\begin{proposition} \label{wk} Let $k$ be a field with fixed separable algebraic closure $\ks$ and let $G$ be a (connected) reductive algebraic $k$-group scheme. Then
\begin{enumerate}
\item[(i)] $G$ is a $\ks$-rational variety, and
\item[(ii)] $\pic G=0$ if $G$ is simply connected.
\end{enumerate}
\end{proposition}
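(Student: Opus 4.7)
My plan is to reduce both assertions to the split case over $\ks$ via base change, and then exploit the big-cell decomposition. For (i), I would base-change to $\ks$, where $G_{\ks}$ becomes split reductive. Fixing opposite Borel subgroups $B^{\pm}\subset G_{\ks}$ with common split maximal torus $T$ and unipotent radicals $U^{\pm}$, the multiplication morphism $U^{-}\times_{\ks}\lbe T\times_{\ks}\lbe U^{+}\to G_{\ks}$ is an open immersion onto a dense open subscheme $\Omega$ (the big cell). Since $U^{\pm}\simeq \ba^{\le N}_{\ks}$ as $\ks$-schemes and the split torus $T\simeq \bg_{m,\ks}^{\le r}$ is an open subscheme of $\ba^{\le r}_{\ks}$, the scheme $\Omega$ is an open subscheme of affine space over $\ks$. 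Consequently $\ks(G_{\ks})=\ks(\Omega)$ is purely transcendental over $\ks$, which proves (i).

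For (ii), I would first reduce to the case $k=\ks$. The Hochschild-Serre spectral sequence for the Galois covering $G_{\ks}\to G$ gives a five-term exact sequence
\[
0\to H^{1}(\Gamma,\ks[G_{\ks}]^{\times})\to \pic G\to (\pic G_{\ks})^{\Gamma}\to H^{2}(\Gamma,\ks[G_{\ks}]^{\times}),
\]
with $\Gamma=\mathrm{Gal}(\ks/k)$. Since $G$ is semisimple, $G^{*}=0$, so Lemma \ref{pug} applied to $G_{\ks}\to \spec\ks$ yields $\ks[G_{\ks}]^{\times}=\ks^{\times}$, and Hilbert 90 gives $H^{1}(\Gamma,\ks^{\times})=0$. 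Therefore $\pic G$ injects into $(\pic G_{\ks})^{\Gamma}$, so it suffices to prove $\pic G=0$ when $k=\ks$.

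Assuming now $k=\ks$, so that $G$ is split simply connected semisimple, I would use the big cell $\Omega=U^{-}TU^{+}\subset G$ from (i). By the Bruhat decomposition, the codimension-one part of the closed complement $G\setminus\Omega$ is the union of Schubert divisors $D_{\le 1},\dots,D_{\le r}$ indexed by the simple roots $\alpha_{\le 1},\dots,\alpha_{\le r}$. The Weil-divisor localization sequence
\[
\bigoplus_{i=1}^{r}\Z\le [D_{\le i}]\to \pic G\to \pic\Omega\to 0,
\]
combined with $\pic\Omega=\pic(T\times\ba^{\le 2N})=\pic T=0$ (as $T$ is split), shows that $\pic G$ is generated by the classes $[D_{\le i}]$. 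For each character $\chi\in X^{*}(T)$, $\chi$ is an invertible regular function on $\Omega$ that defines a rational function on $G$ whose principal divisor equals $\sum_{i}\langle\chi,\alpha_{\le i}^{\vee}\rangle\,[D_{\le i}]$. Since $G$ is simply connected, $X^{*}(T)$ is the full weight lattice and contains the fundamental weights $\varpi_{\le i}$ with $\langle\varpi_{\le i},\alpha_{\le j}^{\vee}\rangle=\delta_{\le ij}$; hence every $[D_{\le i}]$ is principal and $\pic G=0$. The main obstacle will be the last principal-divisor computation $\mathrm{div}(\chi)=\sum_{i}\langle\chi,\alpha_{\le i}^{\vee}\rangle[D_{\le i}]$, which requires a careful local analysis of the Bruhat decomposition near each simple reflection $s_{\alpha_{\le i}}$ in order to determine the order of vanishing of $\chi$ along $D_{\le i}$.
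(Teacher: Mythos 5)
Your part (i) is the same argument as the paper's: base-change to $\ks$ (over which $G$ is split), and observe that the big cell $U^{-}\times_{\ks}T\times_{\ks}U^{+}\to G_{\ks}$ is an open immersion with rational source; the paper cites \cite[Propositions 2.1.8, 2.1.10 and 2.2.9]{cgp} for exactly this. Your part (ii), however, takes a genuinely different route. The paper stays over the arbitrary field $k$ and quotes two structural results: by \cite[Corollary 5.7]{ct08} the group $\pic G$ is identified with the group of central extensions of $G$ by $\bg_{m,\le k}$, and by \cite[Proposition 2.4]{ga2} every such extension of a simply connected group splits; so $\pic G=0$ with no divisor computation at all. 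You instead first descend to $\ks$ via the low-degree exact sequence of the Hochschild--Serre spectral sequence, using $\ks[G_{\ks}]^{\times}=\ks^{\times}$ (Rosenlicht's unit theorem, i.e.\ Lemma \ref{pug} together with $G^{*}=0$ for semisimple $G$) and Hilbert 90 to get $\pic G\hookrightarrow(\pic G_{\ks})^{\Gamma}$, and then run the classical Bruhat-cell argument: $\pic G_{\ks}$ is generated by the Schubert divisors $D_{i}$, and $\mathrm{div}(f_{\chi})=\sum_{i}\langle\chi,\alpha_{i}^{\vee}\rangle\,[D_{i}]$ kills all of them once the fundamental weights lie in $X^{*}(T)$. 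This is correct and is essentially the Fossum--Iversen/Popov computation of $\pic G\simeq P/X^{*}(T)$; its one nontrivial ingredient is precisely the divisor formula you flag at the end, which is a known but not free lemma (it is where the hypothesis ``simply connected'' actually enters, and it requires the local analysis along each $\overline{B^{-}s_{i}B}$ that you defer). What each approach buys: the paper's is shorter and uniform over any $k$, at the price of invoking the central-extension machinery; yours is self-contained modulo that classical divisor computation and has the added benefit of computing $\pic G_{\ks}$ for arbitrary (not necessarily simply connected) semisimple $G$ as a by-product.
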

\begin{proof} (See \cite{nn-1} and \cite{nn-2}) To prove (i), we may assume that $k=\ks$. Let $T$ be a (split) maximal $k$-torus in $G$, let $B\supset T$ be the Borel $k$-subgroup of $G$ such that the set of $T$-roots on ${\rm Lie}(B)$ is a chosen positive system of $T$-roots for $G$ and let $B^{\e\prime}\supset T$ be the Borel $k$-subgroup of $G$ opposite to $B$. If $U\subset B$ and $U^{\e\prime}\subset B^{\e\prime}$ are the $k$-split $k$-unipotent radicals, then the multiplication morphism $U^{\e\prime}\lbe\times_{k} T\lbe\times_{k}\lbe U\to G$ is an open immersion with $k$-rational source, which yields (i). See \cite[Propositions 2.1.8,(2),(3), p.~53, 2.1.10, p.~58 and 2.2.9, p.~67]{cgp}. Now let $k$ be any field and let $G$ be a simply connected $k$-group scheme. By \cite[Corollary 5.7]{ct08}\,\footnote{\e The proof of this result depends on (i) but not on the assertion that we aim to prove, i.e., (ii).}\,, $\pic G$ is canonically isomorphic to the group of central extensions of $G$ by $\bg_{m,\le k}$. Since any such extension is trivial by \cite[Proposition 2.4]{ga2}, the proof is complete.
\end{proof}

Let again $S$ be any nonempty scheme and let $G$ be a reductive $S$-group scheme.
There exist a canonical central extension of flat $S$-group schemes of finite type
\begin{equation}\label{mgg}
1\to\mu\overset{i}{\to}\Gtil\to G^{\der}\to 1
\end{equation}
and a canonical exact sequence of reductive $S$-group schemes
\begin{equation}\label{ggg}
1\to G^{\der}\overset{\iota}{\to} G\overset{\!q}{\to} G^{\tor}\to 1.
\end{equation}
When reference to $G$ is necessary, we will write
\begin{equation}\label{sg}
q_{\le(G\le)}\colon G\to G^{\tor}
\end{equation}
for the morphism $q$ in \eqref{ggg}. Note that, since $(G^{\der})^{*}=0$, \eqref{ggg} induces an isomorphism of \'etale twisted constant $S$-group schemes
\begin{equation}\label{iso0}
q_{\le(G\le)}^{(*)}\colon (G^{\tor})^{*}\isoto G^{\e *}.
\end{equation}
Further, the exact sequence \eqref{mgg} induces a morphism of abelian groups \eqref{dmap}
\begin{equation}\label{stl}
e\colon \mu^{\lbe*}\be(\lbe S\le)\!\!\to \pic G^{\der}, \, \chi\mapsto \big[\e \Gtil\be\wedge^{\lbe \mu_{G^{\lbe\der}},\e\chi_{G^{\lbe\der}}}\be\bg_{m,\e G^{\der}\be,\e r}\e\big],
\end{equation}
and a morphism of \'etale sheaves on $S$ \eqref{vcm3}
\begin{equation}\label{vcm2}
\bm{e}\colon \mu^{*}\!\!\to \pic_{\be G^{\der}\!/\lbe S}.
\end{equation}
If $S$ is strictly local, then $\bm{e}(\lle S\e)=e$. 

\begin{lemma} \label{bseq} Let $S$ be a scheme and let $1\to G_{1}\to G_{2}\to G_{3}\to 1$ be an exact sequence of	reductive $S$-group schemes. Then the given sequence induces an exact sequence of $S$-group schemes of multiplicative type
\[
1\to\mu_{1}\to\mu_{2}\to\mu_{3}\to G_{1}^{\tor}\to G_{2}^{\tor}\to G_{3}^{\tor}\to 1.
\]
\end{lemma}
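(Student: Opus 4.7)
The plan is to deduce the sequence from two applications of the snake lemma to the commutative ladder of four-term exact sequences
\[
1 \to \mu_i \to \Gtil_i \to G_i \to G_i^{\tor} \to 1, \qquad i = 1, 2, 3,
\]
together with preliminary identifications obtained by functoriality. The horizontal morphisms in the ladder come from: the restrictions $G_i^{\der} \to G_{i+1}^{\der}$ (characteristic property of the derived subgroup), the passage to toric quotients $G_i^{\tor} \to G_{i+1}^{\tor}$, and the unique lifts $\Gtil_i \to \Gtil_{i+1}$ (universal property of the simply connected central cover), whose restrictions to the kernels yield $\mu_i \to \mu_{i+1}$.

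First I verify the various surjectivities. Since $G_2 \twoheadrightarrow G_3$, the image of $G_2^{\der}$ in $G_3$ is a normal connected semisimple subgroup whose quotient is a quotient of the commutative $G_2^{\tor}$, hence equals $G_3^{\der}$; so $G_2^{\der} \twoheadrightarrow G_3^{\der}$ and $G_2^{\tor} \twoheadrightarrow G_3^{\tor}$. The same connectedness-plus-finiteness argument at the cover level gives $\Gtil_2 \twoheadrightarrow \Gtil_3$: the image projects onto $G_3^{\der}$, hence with the finite $\mu_3$ generates $\Gtil_3$, and the connected quotient $\Gtil_3/\mathrm{image}$ is therefore trivial. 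The snake lemma applied to the morphism of short exact sequences $1 \to G_i^{\der} \to G_i \to G_i^{\tor} \to 1$ ($i = 2, 3$) yields a short exact sequence $0 \to G_2^{\der} \cap G_1 \to G_1 \to \ker(G_2^{\tor} \to G_3^{\tor}) \to 0$, identifying $\ker(G_2^{\tor} \to G_3^{\tor})$ with $\mathrm{image}(G_1^{\tor} \to G_2^{\tor})$ and giving exactness at $G_2^{\tor}$. A second snake-lemma application to $1 \to \mu_i \to \Gtil_i \to G_i^{\der} \to 1$ ($i = 2, 3$) produces an isomorphism between $\mu_3/\mathrm{image}(\mu_2)$ and a quotient of $G_2^{\der} \cap G_1$; the associated boundary map vanishes on $G_1^{\der}$ because any element of $G_1^{\der}$ lifts to $\Gtil_1$, whose image in $\Gtil_3$ is trivial (a connected semisimple group maps trivially into the finite group $\mu_3$). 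This yields a canonical isomorphism $(G_2^{\der} \cap G_1)/G_1^{\der} \isoto \mu_3/\mathrm{image}(\mu_2)$, and composing its inverse with the inclusion $(G_2^{\der} \cap G_1)/G_1^{\der} \hookrightarrow G_1^{\tor}$ defines the map $\mu_3 \to G_1^{\tor}$ and establishes exactness at $\mu_3$ and at $G_1^{\tor}$. Applying the same snake lemma to indices $(1, 2)$ delivers exactness at $\mu_2$.

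The main obstacle will be injectivity of $\mu_1 \hookrightarrow \mu_2$ (equivalently, by the snake lemma applied to $1 \to \mu_i \to \Gtil_i \to G_i^{\der} \to 1$ with the injective column $G_1^{\der} \hookrightarrow G_2^{\der}$, of $\Gtil_1 \hookrightarrow \Gtil_2$). Since $G_1$ is normal in $G_2$, the semisimple subgroup $G_1^{\der}$ is normal in the reductive $G_2^{\der}$, and at the Lie algebra level $\mathrm{Lie}(G_1^{\der})$ is a direct-summand ideal in $\mathrm{Lie}(G_2^{\der})$. \'Etale-locally on $S$, this decomposition integrates to a central isogeny $G_1^{\der} \times H \to G_2^{\der}$ for some complementary reductive $H$, whence an isomorphism $\Gtil_1 \times \tilde{H} \isoto \Gtil_2$ identifying $\Gtil_1 \to \Gtil_2$ with the canonical factor inclusion; descending produces the required global injectivity.
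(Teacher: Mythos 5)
Your overall architecture (two snake‑lemma applications to the ladders $1\to\mu_i\to\Gtil_i\to G_i^{\der}\to 1$ and $1\to G_i^{\der}\to G_i\to G_i^{\tor}\to 1$) differs from the paper's, which instead quotes \cite[Proposition 2.10]{ga2} for the existence of an exact commutative diagram with exact top row $1\to\Gtil_1\to\Gtil_2\to\Gtil_3\to 1$ and then applies a single (suitably adapted, non‑abelian) snake lemma to the columns $\Gtil_i\to G_i$, whose kernels and cokernels are precisely $\mu_i$ and $G_i^{\tor}$. The trouble is that the hard content of that cited result --- exactness of the sequence of simply connected covers at $\Gtil_2$, i.e.\ $\krn(\Gtil_2\to\Gtil_3)=\img(\Gtil_1\to\Gtil_2)$ --- is used implicitly in your argument but never established. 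Concretely: (1) your boundary‑map computation shows only that $\delta\colon G_2^{\der}\cap G_1\to\mu_3/\img(\mu_2)$ kills $G_1^{\der}$, i.e.\ it gives a \emph{surjection} $(G_2^{\der}\cap G_1)/G_1^{\der}\twoheadrightarrow\mu_3/\img(\mu_2)$; the claimed isomorphism (on which the very definition of the connecting map $\mu_3\to G_1^{\tor}$ rests) requires the reverse inclusion $\img\bigl(\krn(\Gtil_2\to\Gtil_3)\bigr)\subseteq G_1^{\der}$, which is exactly the missing exactness at $\Gtil_2$. (2) Exactness at $\mu_2$ cannot come from a snake lemma on indices $(1,2)$ alone: it is equivalent to $\mu_2\cap\krn(\Gtil_2\to\Gtil_3)=\mu_2\cap\img(\Gtil_1)$, again the same missing fact; a diagram involving only $G_1$ and $G_2$ carries no information about the map $\mu_2\to\mu_3$.

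The second genuine problem is the final paragraph. Over a general base and in arbitrary characteristic you cannot argue that $\mathrm{Lie}(G_1^{\der})$ is a direct‑summand ideal that ``integrates'' to an almost‑direct product decomposition: Lie algebras of semisimple group schemes in small characteristic are not semisimple and do not control normal subgroups. The correct tool here is the structure theory of normal semisimple subgroup schemes of a semisimple group scheme (SGA3, Exp.~XXII), from which one gets that a connected normal subgroup of the simply connected $\Gtil_2$ is a direct factor, hence that the image of $\Gtil_1$ is such a factor and $\Gtil_1\to\Gtil_2$ is a closed immersion; the same connectedness argument (a finite quotient of the kernel of $\Gtil_2\to\Gtil_3$ would yield a nontrivial central isogeny onto the simply connected $\Gtil_3$, so that kernel is connected and its image in $G_2^{\der}$ is the identity component $G_1^{\der}$ of $G_2^{\der}\cap G_1$) is what closes gap (1). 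All of this is precisely the content of the result the paper cites, so your proposal does not actually bypass it.
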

\begin{proof} By \cite[Proposition 2.10]{ga2}, there exists an exact and commutative diagram of reductive $S$-group schemes
\[
\xymatrix{1\ar[r] &\Gtil_{1}\ar[r]\ar[d] &\Gtil_{2}\ar[r]
\ar[d] &\Gtil_{ 3}\ar[r]\ar[d] &1 \\
1\ar[r] & G_{1}\ar[r] & G_{2}\ar[r] & G_{3}\ar[r] &1.}
\]
Now, although the category of reductive $S$-group schemes is not abelian, the proof of the snake lemma given in \cite[Lemma 2.3, p.~307]{bey} can be adapted so that it applies to the above diagram (for example, the decomposition \cite[(2.2), p.~306]{bey} is valid if we set $X=G_{2}\!\times_{G_{3}}\be\Gtil_{3}$ there). The sequence of the lemma then follows from the above diagram as in \cite[Lemma 2.3, p.~307]{bey}. 
\end{proof}

\begin{lemma}\label{picr} Let $S$ be a locally noetherian normal scheme and let $G$ be a smooth $S$-group scheme with connected fibers at every point of $S$ of codimension $\leq 1$. Then there exists a canonically split exact sequence of abelian groups
\[
0\to\pic S\to \pic G\to \prod \pic G_{\lbe\eta}\to 0,
\]
where the product runs over the set of maximal points $\eta$ of $S$.
\end{lemma}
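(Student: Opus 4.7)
The plan is to use the unit section $\varepsilon_{\lbe G}$ of $\pi_{\lbe G}\colon G\to S$ to split the sequence and establish the left injectivity, and to prove the remaining exactness by a Weil-divisor argument over the codimension $\leq 1$ locus of $S$. Since a locally noetherian normal scheme decomposes as a disjoint union of integral (normal) connected components, we may assume $S$ is integral with unique maximal point $\eta$. The retraction $\varepsilon_{\lbe G}^{\,*}\colon\pic G\to\pic S$ of $\pi_{\lbe G}^{\,*}$ immediately gives injectivity of $\pi_{\lbe G}^{\,*}$ together with a canonical splitting of the whole sequence.

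The key local input is as follows. For every codimension $1$ point $s\in S$, $\mathcal O_{\lbe S,\lbe s}$ is a DVR, so $G_{(s)}:=G\times_{\be S}\spec\mathcal O_{\lbe S,\lbe s}$ is smooth over a DVR, hence regular, and $\pic G_{(s)}=\mr{Cl}(G_{(s)})$. The fiber $G_{\lbe s}$ is smooth and connected over $k(s)$, hence integral, and is cut out in $G_{(s)}$ by a uniformizer of $\mathcal O_{\lbe S,\lbe s}$ (using flatness of $\pi_{\lbe G}$ and reducedness of $G_{\lbe s}$). Thus $[G_{\lbe s}]=0$ in $\mr{Cl}(G_{(s)})$, and the localization sequence for divisor class groups, combined with $\mr{Cl}(G_{\lbe\eta})=\pic G_{\lbe\eta}$ (as $G_{\lbe\eta}$ is regular), yields an isomorphism $\pic G_{(s)}\isoto\pic G_{\lbe\eta}$ induced by restriction to the generic fiber.

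Given $L_{\lbe\eta}\in\pic G_{\lbe\eta}$, a standard limit argument extends it to $L_{V}\in\pic G_{V}$ for some open $V\ni\eta$ in $S$, and for each codimension $1$ point $s\notin V$ the local isomorphism above extends $L_{\lbe\eta}$ uniquely to $G_{(s)}$, hence to a neighborhood of $s$. Gluing yields $L_{W}\in\pic G_{W}$ on an open $W\subset S$ containing $\eta$ and every codimension $1$ point, so $\dimn(S\setminus W)\geq 2$; by equidimensionality of the smooth morphism $\pi_{\lbe G}$, $\dimn(G\setminus G_{W})\geq 2$ as well. Normality of $G$ (smooth over normal), together with regularity of $G$ along the preimage of the codimension $\leq 1$ locus of $S$, lets us extend $L_{W}$ uniquely to $L\in\pic G$, proving surjectivity. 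For exactness in the middle, if $L\in\pic G$ satisfies $L|_{G_{\lbe\eta}}=0$, we use the splitting to reduce to the case $\varepsilon_{\lbe G}^{\,*}L=0$; then the local isomorphism gives $L$ trivial on some $G_{W}$ as above, and a nowhere-vanishing section on $G_{W}$ extends to a section on $G$ (by normality and the codimension bound) whose zero locus is a Cartier divisor supported on a codimension $\geq 2$ set, hence empty, so $L\simeq\mathcal O_{G}$.

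The principal obstacle is the interplay between Cartier and Weil divisors, since $G$ is only normal, not necessarily regular at codimension $\geq 2$ points. This is sidestepped by carrying out the essential computation at codimension $1$ points of $S$, where $G_{(s)}$ is regular and $\pic=\mr{Cl}$, and by carefully using normality of $G$ to extend sections of line bundles across codimension $\geq 2$ loci in the last step.
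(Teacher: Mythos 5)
Your reduction to $S$ integral, the splitting via the unit section, and the local computation $\pic G_{(s)}\isoto \pic G_{\eta}$ at codimension\nobreakdash-$1$ points of $S$ are all correct, and the last of these is indeed the heart of why the statement holds over the codimension $\leq 1$ locus. (For the record, the paper does not reprove this lemma at all: it quotes \cite[Corollary 5.3]{ga4} for the exact sequence and merely observes that the unit section splits it, so your self-contained attempt has to stand on its own.)

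The genuine gap is the last step of your surjectivity argument: ``normality of $G$ \dots lets us extend $L_{W}$ uniquely to $L\in\pic G$.'' This is not a valid principle. If $j\colon G_{W}\hookrightarrow G$ is the inclusion of the complement of a closed subset of codimension $\geq 2$, normality ($R_{1}+S_{2}$) guarantees that $j_{*}L_{W}$ is a reflexive rank-one sheaf extending $L_{W}$, and that sections of a line bundle \emph{already defined on all of $G$} extend across $G\setminus G_{W}$; it does \emph{not} make $j_{*}L_{W}$ invertible. Since $S$ is only normal, the smooth $S$-scheme $G$ need not be locally factorial at points lying over points of $S$ of codimension $\geq 2$ --- this is exactly why Remark \ref{comm}(b) has to assume $S$ \emph{regular} to obtain local factoriality --- and the standard example ($X$ normal, $D$ a non-Cartier Weil divisor, $U$ the complement of its non-Cartier locus) shows that a line bundle off a codimension $\geq 2$ set can fail to extend invertibly. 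So the ``principal obstacle'' you identify is not in fact sidestepped by normality; the classical tool for precisely this difficulty is the Ramanujam--Samuel theorem \cite[$\mathrm{IV}_{4}$, 21.14.1 and 21.14.3]{ega}, which controls when divisorial ideals on a scheme smooth over a normal base are invertible, and this is the input behind \cite[Corollary 5.3]{ga4}. A secondary, more repairable, defect: both ``gluing yields $L_{W}$'' and ``the local isomorphism gives $L$ trivial on some $G_{W}$'' ignore that existence (resp.\ triviality) of a line bundle on each member of an open cover does not yield a glued bundle (resp.\ triviality) on the union without controlling the \v{C}ech $H^{1}$ of the cover; this is better handled by working with Weil divisor classes on the integral normal scheme $G$, where $\krn[\,\mr{Cl}(G)\to\mr{Cl}(G_{\eta})]$ is generated by the classes of the fibres $G_{s}$ over the codimension-$1$ points $s$ of $S$.
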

\begin{proof} See \cite[Corollary 5.3]{ga4} and note that the unit section of $G$ defines a retraction of the canonical map $\pic S\to \pic G$ which splits the above sequence \cite[Remark 3.1(c)]{ga4}.
\end{proof}

\begin{proposition} \label{qt} Let $S$ be a noetherian strictly local regular scheme and let $G$ be a reductive $S$-group scheme such that $G^{\der}$ is simply connected. Then $\pic G=0$.
\end{proposition}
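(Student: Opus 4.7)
The plan is to apply Corollary \ref{kcor}(i) to the canonical exact sequence \eqref{ggg}, $1 \to G^{\der} \to G \to G^{\tor} \to 1$, and to deduce $\pic G = 0$ from the vanishing of both $\pic G^{\tor}$ and $\npic(G^{\der}/S)$. The relevant segment of the resulting long exact sequence reads $\pic G^{\tor} \to \pic G \to \npic(G^{\der}/S)$, so it suffices to show that both flanking terms vanish. To invoke Corollary \ref{kcor}(i) I must first check its hypotheses: all three groups in the sequence are reductive, hence smooth with connected fibers; $S$ is locally noetherian regular by assumption; and the generic fiber $G^{\der}_{\lbe\eta}$ (with $\eta$ the unique generic point of the integral regular local scheme $S$) is reductive over the field $k(\eta)$, so it is $k(\eta)^{\rm s}$-rational by Proposition \ref{wk}(i).

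For the vanishing of $\pic G^{\tor}$, I will use the standard fact that every torus over a strictly local base splits, so that $G^{\tor} \simeq \bg_{m,\le S}^{\e r}$ for some $r \ge 0$. Writing $S = \spec A$, the ring $A$ is a regular local ring and hence a UFD, so its Laurent polynomial extension $A[t_1^{\pm 1}, \dots, t_r^{\pm 1}]$ is a localization of the UFD $A[t_1, \dots, t_r]$ and therefore itself a UFD; consequently $\pic G^{\tor} = 0$. For the vanishing of $\npic(G^{\der}/S)$, the hypothesis that $G^{\der}$ is simply connected allows me to invoke Proposition \ref{wk}(ii) at the generic fiber, yielding $\pic G^{\der}_{\lbe\eta} = 0$. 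Applying Lemma \ref{picr} to the smooth $S$-group scheme $G^{\der}$ (whose fibers are connected since it is semisimple) then gives a canonically split exact sequence $0 \to \pic S \to \pic G^{\der} \to \pic G^{\der}_{\lbe\eta} \to 0$; since $A$ is local we have $\pic S = 0$, so $\pic G^{\der} = 0$, and consequently $\npic(G^{\der}/S) = 0$ by the definition \eqref{npic}.

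Feeding these two vanishings into the exact sequence from Corollary \ref{kcor}(i) then forces $\pic G = 0$. The steps above are essentially routine applications of the machinery already developed in the paper; the simply connected hypothesis on $G^{\der}$ enters exactly once, through Proposition \ref{wk}(ii), while the strict locality of $S$ enters twice (via splitting of $G^{\tor}$ and via $\pic S = 0$). The main bookkeeping is simply verifying the hypotheses of Corollary \ref{kcor}(i) and Lemma \ref{picr}, and I do not anticipate any serious obstacle.
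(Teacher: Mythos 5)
Your proof is correct and follows essentially the same route as the paper: apply Corollary \ref{kcor}(i) to the sequence \eqref{ggg}, kill $\pic G^{\tor}$ using the splitting of $G^{\tor}$ over the strictly local base, and kill the $G^{\der}$-term via Lemma \ref{picr} and Proposition \ref{wk}(ii). The only (harmless) deviation is that you deduce $\pic\bg_{m,\le S}^{\e r}=0$ from the fact that $A[t_1^{\pm 1},\dots,t_r^{\pm 1}]$ is a UFD, whereas the paper uses Lemma \ref{picr} again to reduce to $\pic\bg_{m,\le K}^{\e n}=H^{1}(K,\Z^{n})=0$.
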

\begin{proof} Since $G^{\tor}$ splits over a finite \'etale cover of $S$ \cite[X, Corollary 4.6(i)]{sga3}, we have $G^{\tor}\simeq\bg_{m,\le S}^{n}$ for some $n\geq 0$ by \cite[${\rm IV}_{4}$, Proposition 18.8.1(b)]{ega}. Now, since $\pic S=0$, Lemma \ref{picr} shows that $\pic G^{\tor}\simeq \pic\bg_{m,\le S}^{n}\simeq\pic\bg_{m,\le K}^{n}$, where $K$ is the function field of $S$. Thus, since $\pic\bg_{m,\le K}^{n}=H^{1}(K,\Z^{n})=0$ by \cite[Lemma 6.9(ii), p.~41]{san}, we have $\pic G^{\tor}=0$. Now Corollary \ref{kcor}(i) applied to the exact sequence \eqref{ggg} shows that the canonical map $\pic G\to\pic G^{\der}_{\be K}$ is injective. Since $\pic G^{\der}_{\be K}=0$ by Proposition \ref{wk}(ii), the proposition follows.
\end{proof}

\smallskip

A {\em  $t$-resolution} of a reductive $S$-group scheme $G$ is a central extension of reductive $S$-group schemes $(\s R\e)\colon 1\to T\to H\to G\to 1$, where  $T$ is an $S$-torus and $H^{\be\der}$ is simply connected. A {\it morphism of $t$-resolutions of $G$} is a morphism of central extensions of $G$ \cite[Definition 2.4]{bga}. We recall from  \cite[proof of Proposition 2.2]{bga} the following construction of a $t$-resolution of $G$.

The composition $\Gtil\twoheadrightarrow G^{\der}\hookrightarrow G$ and the product in $G\e$ define a faithfully flat morphism of $S$-group schemes $\rad(G\e)\times_{S}\e\Gtil\to G$ whose kernel $\mu^{\le\prime}$ is a finite $S$-group scheme of multiplicative type. Let $i^{\le\prime}\colon \mu^{\le\prime}\hookrightarrow \rad(G\e)\times_{S}\Gtil$ be the corresponding central embedding. Now choose a closed immersion $l^{\e\prime}\colon \mu^{\le\prime}\hookrightarrow T$, where $T$ is an $S$-torus, and let
\begin{equation}\label{pout}
H=\big(\rad(G\e)\!\times_{\be S}\be\Gtil\,\big)\be\wedge^{i^{\le\prime},\e \mu^{\le\prime},\e l^{\e\prime}}\! T
\end{equation}
be the pushout of $i^{\e\prime}$ and $l^{\e\prime}$. Since $H^{\lbe\der}$ is isomorphic to $\Gtil$ \cite[proof of Proposition 3.2]{ga2}, we obtain a $t$-resolution of $G$
\begin{equation} \label{tres}
1\to T\overset{j}\to H\overset{p}\to G\to 1\qquad(\s R\e).
\end{equation}
The above sequence is the bottom row of the following exact and commutative diagram of $S$-group schemes
\begin{equation} \label{tdiag}
\xymatrix{1\ar[r] & \mu^{\le\prime}\ar[r]^(.35){i^{\le\prime}}\ar@{^{(}->}[d]_{l^{\le\prime}} & \rad(G\e)\!\times_{\lbe S}\be\Gtil\ar@{^{(}->}[d]\ar[r] & G\ar[r]\ar@{=}[d]&1 \\
1\ar[r] & T\ar[r]^{j}& H\ar[r]^{p} & G\ar[r] & 1.}
\end{equation}
Now let
\begin{equation}\label{h1}
H_{1}=H\!\times_{G}\be G^{\der}.
\end{equation}
Then \eqref{tres} induces an exact sequence of reductive $S$-group schemes 
\begin{equation} \label{tresd}
1\to T\overset{\!\be j_{1}}\to H_{1}\overset{\!\be p_{1}}\to G^{\der}\to 1
\end{equation}
which fits into an exact and commutative diagram of reductive $S$-group schemes
\[
\xymatrix{1\ar[r] & T\ar[r]^{j_{1}}\ar@{=}[d] & H_{1}\ar@{^{(}->}[d]^{\iota_{\lbe H}}\ar[r]^{p_{\le 1}} & G^{\der}\ar[r]\ar@{^{(}->}[d]^{\iota} &1 \\
1\ar[r] & T\ar[r]^{j}& H\ar[r]^{p} & G\ar[r] & 1.}
\]
The preceding diagram induces an exact sequence of reductive $S$-group schemes
\begin{equation}\label{aseq}
1\to H_{1}\to H\to G^{\tor}\to 1.
\end{equation}
Now set $R=H^{\lbe\tor}$ and $R_{1}=H_{1}^{\tor}$. Then Lemma \ref{bseq} applied to the sequence \eqref{aseq} shows that $H_{1}^{\der}$ is simply connected and, moreover, yields an exact sequence of $S$-tori
\begin{equation} \label{tori}
1\to R_{1}\overset{\! r}\to R\to G^{\tor}\to 1.
\end{equation}
In particular, \eqref{tresd} is a $t$-resolution of $G^{\der}$. Next, by \cite[proof of Proposition 3.2]{ga2}, the $S$-group scheme $\mu^{\le\prime}$ introduced above is related to the fundamental group $\mu$ of $G^{\der}$ via a (canonical) exact sequence of $S$-group schemes of multiplicative type 
\[
1\to\mu\to\mu^{\le\prime}\to \rad(G\e)\to G^{\tor}\to 1.
\]
Let $l\colon\mu\hookrightarrow T$ be the composition of closed immersions $\mu\hookrightarrow\mu^{\le\prime}\overset{l^{\le\prime}}\hookrightarrow T$. Since $i^{\le\prime}\colon \mu^{\le\prime}\to \rad(G\e)\times_{\be S}\Gtil$ can be identified with $\varepsilon_{G}\times_{G}(\e \rad(G\e)\times_{\lbe S}\Gtil\e)$ and $\rad(G\e)\times_{G}G^{\der}$ is the trivial $S$-group scheme, we may identify the morphisms $i^{\le\prime}\times_{G}G^{\der}$ and $i\colon \mu\to\Gtil$. Thus the pullback of diagram \eqref{tdiag} along $\iota\colon G^{\der}\hookrightarrow G$ is (isomorphic to) the pushout diagram
\[
\xymatrix{1\ar[r] &\mu\ar[r]^{i}\ar@{^{(}->}[d]_(.45){l} &\Gtil\ar[r]\ar@{^{(}->}[d]& G^{\der}\ar[r]\ar@{=}[d] &1\\
1\ar[r] & T\ar[r]^{j_{1}} & H_{1} \ar[r]^{p_{1}} & G^{\der} \ar[r] &1,
}
\]
i.e., $H_{1}\simeq\Gtil\wedge^{\lbe i,\e\mu,\e l}\e T$. See \eqref{bcf}. Now \eqref{tbc} yields a canonical isomorphism of right $T_{G^{\der}}\e$-torsors over $G^{\der}$
\begin{equation} \label{semi1}
H_{1}\simeq\Gtil\wedge^{\mu_{G^{\lbe\der}},\, l_{\lbe G^{\lbe\der}}}\! T_{G^{\lbe\der},\, r}.
\end{equation}
Next, if $q_{\le(\lbe H_{1}\lle)}$ and $q_{\le(\lbe H\lle)}$ are the maps \eqref{sg} associated to $H_{1}$ and $H$, respectively, then there exist exact and commutative diagrams of $S$-group schemes
\begin{equation}\label{fund1}
\xymatrix{&1\ar[d] &1\ar[d] &1\ar[d] &\\
1\ar[r] &\mu\ar[r]^{i}\ar[d]_(.45){l} &\Gtil\ar[r]\ar[d]& G^{\der}\ar[r]\ar@{=}[d] &1\\
1\ar[r] & T\ar[r]^{j_{1}}\ar[d]_(.45){\rho_{\le 1}} & H_{1} \ar[r]^{p_{1}}\ar[d]^(.45){q_{\lle(\lbe H_{\lbe 1}\lbe)}} & G^{\der} \ar[r] &1\\
& R_{1}\ar[d]\ar@{=}[r] & R_{1}\ar[d]&&&\\
&1 &1 &&& 
}
\end{equation}
and
\begin{equation}\label{fund}
\xymatrix{ &1\ar[d] &1\ar[d] &1\ar[d] &\\
1\ar[r] &\mu\ar[r]^{i}\ar[d]_(.45){l} &\Gtil\ar[r]\ar[d]& G^{\der}\ar[r]\ar[d]^{\iota} & 1\\
1\ar[r] & T\ar[r]^{j}\ar[d]_(.45){\!\!\rho_{1}}\ar[dr]^{\!\!\rho}\ar[d] & H \ar[r]^{p}\ar[d]^(.44){q_{\lle(\lbe H)}} & G \ar[r]\ar[d] &1\\
1\ar[r] & R_{1}\ar[d]\ar[r]^{r} & R\ar[d]\ar[r] & G^{\tor}\ar[d]\ar[r] &1,\\
&1 &1 &1 & 
}
\end{equation}
where the bottom row in \eqref{fund} is the sequence \eqref{tori} and
\begin{equation}\label{ro}
\rho=r\!\circ\!\rho_{\le 1}=q_{\le(\lbe H\lle)}\circ j\colon T\to R.
\end{equation}
Note that the left-hand column in \eqref{fund1} induces an exact sequence of \'etale twisted constant $S$-group schemes
\begin{equation}\label{fdsd1}
1\to R_{1}^{\e *}\overset{\rho_{1}^{\lle (*)}}{\to} T^{\e *}\overset{\! l^{\le (*)}}{\to} \mu^{*}\to 1.
\end{equation}
Further, by the commutativity of \eqref{fund1}, we have
\begin{equation}\label{sab1}
\rho_{1}^{\lle (*)}\be(\le S\e)=j_{1}^{\lle (*)}\be(\le S\e)\lbe\circ\lbe q_{\le(\lbe H_{\lbe 1}\lbe)}^{\lle (*)}\lbe(\le S\e),
\end{equation}
where $q_{\le(\lbe H_{\lbe 1}\lbe)}^{\lle (*)}\colon R_{1}^{\e *}\isoto H_{1}^{\le *}$ is the isomorphism \eqref{iso0} associated to $H_{1}$.

Now we observe that a $t$-resolution \eqref{tres} of $G$ and the associated $t$-resolution \eqref{tresd} of $G^{\der}$ induce, respectively, morphisms of abelian groups \eqref{dmap}
\begin{equation}\label{e}
d\colon T^{\e*}\be(\lbe S\le)\!\!\to \pic G, \, \chi\mapsto \big[\e H\be\wedge^{T_{G},\e \chi_{G}}\be\bg_{m,\e G,\e r}\e\big]
\end{equation}
and
\begin{equation}\label{e1}
d^{\e\prime}\colon T^{\e*}\be(\lbe S\le)\!\!\to \pic G^{\der}, \, \chi\mapsto \big[\e H_{1}\be\wedge^{T_{G^{\der}},\e \chi_{G^{\der}}}\be\bg_{m,\e G^{\der}\be,\e r}\e\big]
\end{equation}
such that the following diagram commutes \eqref{tia0}
\begin{equation}\label{tia}
\xymatrix{ 
T^{\e*}\lbe(\lbe S\e)\ar[dr]_(.45){d^{\e\prime}}\ar[r]^{d}& \pic G\ar[d]^(.45){\pic \iota}\\
&\pic G^{\der}.
}
\end{equation}
Further, there exists a canonical morphism of \'etale sheaves on $S$ \eqref{vcm3}
\begin{equation}\label{bme}
\bm{d}\colon T^{\e*}\to \picgs
\end{equation}
such that $\bm{d}(S\e)=d$ if $S$ is strictly local and
\begin{equation}\label{tia2}
\xymatrix{ 
T^{\e*}\ar[dr]_(.45){\bm{d}^{\e\prime}}\ar[r]^(.45){\bm{d}}& \picgs\ar[d]^(.45){\pic_{\!S}(\le \iota)}\\
&\pic_{\be G^{\der}\!/\lbe S}
}
\end{equation}
is a commutative diagram of \'etale sheaves on $S$ \eqref{tia1}.

\begin{proposition} \label{key1} Let $S$ be a locally noetherian regular scheme and let $G$ be a reductive $S$-group scheme. Then there exists a canonical isomorphism of \'etale sheaves on $S$
\[
\picgs\simeq \mu^{\lbe *}.
\]
\end{proposition}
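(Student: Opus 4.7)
The strategy is to apply Corollary \ref{kcor}(ii) to a $t$-resolution \eqref{tres} of $G$ and then identify the cokernel of $j^{(*)}\colon H^{\e *}\to T^{\e *}$ with $\mu^{\lbe *}$ using the exact sequence \eqref{fdsd1}. Accordingly, I would begin by fixing a $t$-resolution $1\to T\overset{\!j}{\to}H\overset{\!p}{\to}G\to 1$ as in \eqref{tres}. Since $T$ is an $S$-torus, its fibers over maximal points of $S$ are $k(\eta)^{\rm s}$-rational, and every term of the sequence is a smooth $S$-group scheme with geometrically connected fibers. Hence the hypotheses of Corollary \ref{kcor}(ii) are met and one obtains an exact sequence of \'etale sheaves on $S$
\[
0\to G^{\e *}\to H^{\le *}\to T^{\e *}\to \picgs\to\pic_{\lbe H\be/\lbe S}.
\]

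The next step is to show that $\pic_{\lbe H\be/\lbe S}=0$ as an \'etale sheaf. For any geometric point $\bar s\to S$, the strict henselization $\s O_{S,\bar s}^{\rm sh}$ is a noetherian strictly local regular ring and $H_{\s O_{S,\bar s}^{\rm sh}}$ is reductive with simply connected derived subgroup. Proposition \ref{qt} therefore forces $\pic H_{\s O_{S,\bar s}^{\rm sh}}=0$, and by Remark \ref{sl} this is the stalk of $\pic_{\lbe H\be/\lbe S}$ at $\bar s$. All stalks being zero, the \'etale sheaf $\pic_{\lbe H\be/\lbe S}$ vanishes, and the exact sequence above reduces to an isomorphism
\[
\picgs\simeq\cok\!\big[\,j^{(*)}\colon H^{\le *}\to T^{\e *}\big].
\]

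To identify this cokernel with $\mu^{\lbe *}$, I would invoke the commutative diagram \eqref{fund}. The factorization $\rho=q_{\le(\lbe H\lle)}\circ j=r\circ\rho_{\le 1}$ of \eqref{ro} together with the isomorphism $q_{\le(\lbe H\lle)}^{(*)}\colon R^{\e *}\isoto H^{\e *}$ of \eqref{iso0} yields the equality $j^{(*)}\circ q_{\le(\lbe H\lle)}^{(*)}=\rho_{\le 1}^{\lle(*)}\circ r^{(*)}$. Applying the exact functor $(\,\cdot\,)^{\e *}$ to the exact sequence of $S$-tori \eqref{tori} shows that $r^{(*)}\colon R^{\e *}\to R_{1}^{\e *}$ is surjective, while \eqref{fdsd1} exhibits $\rho_{\le 1}^{\lle(*)}$ as the injection $R_{1}^{\e *}\hookrightarrow T^{\e *}$ with cokernel $\mu^{\lbe *}$. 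Consequently $\img j^{(*)}=\rho_{\le 1}^{\lle(*)}(R_{1}^{\e *})$, and \eqref{fdsd1} descends to a canonical isomorphism $\picgs\isoto\mu^{\lbe *}$ for the chosen $t$-resolution.

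The main remaining difficulty is verifying that the resulting isomorphism is independent of the choice of $t$-resolution and functorial in $G$. My plan is to exploit the fact that any two $t$-resolutions of $G$ admit a common third $t$-resolution mapping to both: a morphism of $t$-resolutions induces a morphism of the associated exact sequences and commutes with the identifications $R_{1}^{\e *}\hookrightarrow T^{\e *}\twoheadrightarrow\mu^{\lbe *}$, since $\mu$ is intrinsic to $G^{\der}$. An application of the five lemma then shows that the induced map on cokernels is the identity on $\mu^{\lbe *}$, so the isomorphism $\picgs\simeq\mu^{\lbe *}$ is canonical. Functoriality in $G$ follows by applying the same argument to a morphism $G\to G^{\e\prime}$ equipped with compatible $t$-resolutions, which always exist by the pushout construction used to build \eqref{tres}.
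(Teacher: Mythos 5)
Your proposal is correct, but it reaches the isomorphism by a genuinely different route from the paper's. The paper factors the map as $\picgs\isoto\pic_{\be G^{\der}\!/\lbe S}\isoto\mu^{*}$: after reducing to a strictly local base it proves that the intrinsic morphism $\bm{e}\colon\mu^{*}\to\pic_{\be G^{\der}\!/\lbe S}$ of \eqref{vcm2} (built from the canonical extension $1\to\mu\to\Gtil\to G^{\der}\to 1$, not from any $t$-resolution of $G$) is an isomorphism, by comparing the sequence of Corollary \ref{kcor}(i) for the induced $t$-resolution \eqref{tresd} of $G^{\der}$ with \eqref{fdsd1} and verifying $d^{\e\prime}=e\circ l^{(*)}(S)$ via the contracted-product description \eqref{semi1} of $H_{1}$; it then shows separately that $\pic\iota\colon\pic G\to\pic G^{\der}$ is bijective. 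You instead apply Corollary \ref{kcor}(ii) to a $t$-resolution of $G$ itself, kill $\pic_{\lbe H\lbe/\lbe S}$ by Proposition \ref{qt} (your two opening steps are exactly Corollary \ref{cder} together with the derivation of \eqref{seq1} in the proof of Proposition \ref{ctl}), and then identify $\cok j^{(*)}$ with $\cok\rho^{(*)}\simeq\mu^{*}$ using \eqref{iso0}, \eqref{tori} and \eqref{fdsd1}; all of these inputs are independent of Proposition \ref{key1}, so there is no circularity. What the paper's route buys is that canonicity is manifest, since the isomorphism is $\bm{e}^{-1}\circ\pic_{\lbe S}(\iota)$ with both factors intrinsic to $G$, and this explicit form is needed later (diagram \eqref{tia3} and the proof of Proposition \ref{ctl}). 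What your route buys is a quicker path to the abstract isomorphism, at the cost of pushing the real work into your final paragraph: to compare two $t$-resolutions you must check not only that the embeddings $l\colon\mu\hookrightarrow T$ are compatible under a morphism of $t$-resolutions (true, since $\mu=T\cap H^{\lbe\der}$ and $\phi_{(H)}$ restricts to the identity of $\Gtil$), but also that the connecting maps $\bm{d}\colon T^{\e*}\to\picgs$ are compatible, which amounts to the torsor identity $H_{2}\simeq H_{1}\be\wedge^{T_{1},\e\phi_{(T)}}T_{2}$ over $G$ and an associativity computation of exactly the kind the paper performs for the right-hand square of \eqref{rs}. That verification is where the content of the word ``canonical'' lives in your version, and it should be written out rather than asserted.
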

\begin{proof} We will show that the canonical maps $\pic_{\!S}(\le \iota)\colon \pic_{\be G\lbe/\lbe S}\to \pic_{\be G^{\der}\!/\lbe S}$ (induced by $\iota\colon G^{\der}\hookrightarrow G\e$) and $\bm{e}\colon \mu^{*}\!\!\to \pic_{\be G^{\der}\!/\lbe S}$ \eqref{vcm2} are isomorphisms of \'etale sheaves on $S$. By \cite[Tag 07QL, Lemma 15.42.10]{sp} and standard considerations \cite[Theorem 5.6(i), p.~118, Lemma 6.2.3, p. ~124, and Theorem 6.4.1, p.~128]{t}, we may assume that $S$ is noetherian, strictly local and regular, in which case the proof reduces to checking that the induced maps $e=\bm{e}(\le S\e)\colon\mu^{*}\be(\lbe S\le)\to \pic G^{\der}$ \eqref{stl} and $\pic \iota\colon \pic G\to\pic G^{\der}$ are isomorphisms of abelian groups. Let $H$ be given by \eqref{pout} and let $H_{1}=H\times_{G}\e G^{\der}$. Since $H_{1}^{\der}$ is simply connected, we have $\pic H_{1}=0$ by Proposition \ref{qt}. Thus Corollary \ref{kcor}(i) applied to \eqref{tresd} yields the bottom row of the following diagram of abelian groups with exact rows
\begin{equation}\label{rs}
\xymatrix{0\ar[r]& R_{1}^{\e *}\lbe(\lbe S\le)\ar[rr]^{\rho_{1}^{\lle (*)}\be(S\le)}\ar[d]_(.47){q_{(\lbe H_{\lbe 1}\lbe)}^{\lle (*)}\lbe(\le S\e)}^(.45){\simeq}&& T^{\e *}\be(\lbe S\le)\ar[rr]^{l^{\le (*)}\be(\lle S\lle)}\ar@{=}[d]&&\mu^{\le *}\be(\lbe S\le)\ar[d]^(.45){e}\ar[r]& 0\\
0\ar[r]& H_{1}^{\le *}\be(\lbe S\le)\ar[rr]^{j_{1}^{\lle (*)}\be(\le S\e)}&& T^{\e *}\be(\lbe S\le)\ar[rr]^{d^{\e\prime}}&&\pic G^{\der}\ar[r]& 0,
}
\end{equation}
where the top row is induced by the exact sequence \eqref{fdsd1} using \cite[II, Lemma 6.2.3, p.~124]{t}, the left-hand square commutes by \eqref{sab1} and $d^{\e\prime}$ is the map \eqref{e1}. We will show that the right-hand square in \eqref{rs} commutes, which will show that the right-hand vertical map $e$ is an isomorphism. Let $\chi\colon T\to \bg_{m,\e S}$ be a morphism of $S$-group schemes. By \eqref{semi1} and \cite[III, 1.3.1.3, p.~115, and 1.3.5, p.~116]{gi}, there exist isomorphisms of right $\bg_{m,\e G^{\lbe\der}}\e$-\e torsors over $G^{\der}$
\[
\begin{array}{rcl}
H_{1}\be\wedge^{T_{G^{\lbe\der}},\e \chi_{G^{\lbe\der}}}\be\bg_{m,\e G^{\lbe\der},\e r}&\simeq&(\Gtil\wedge^{\mu_{G^{\lbe\der}},\, l_{\lbe G^{\lbe\der}}}\! T_{G^{\lbe\der},\e r})\wedge^{T_{G^{\lbe\der}},\, \chi_{G^{\lbe\der}}}\be\bg_{m,\e G^{\lbe\der},\e r}\\
&\simeq&\Gtil\be\wedge^{\lbe \mu_{G^{\lbe\der}},\e(\e\chi\e\circ\e l\e)_{G^{\lbe\der}}}\be\bg_{m,\e G^{\lbe\der}\be,\e r}.
\end{array}
\]
Thus, by definitions \eqref{stl} and \eqref{e1}, we have
\[
d^{\e\prime}(\chi)=[\e H_{1}\be\wedge^{T_{\be G^{\lbe\der}},\e \chi_{G^{\lbe\der}}}\be\bg_{m,\e G^{\lbe\der},\e r}\e]=[\e \Gtil\be\wedge^{\lbe \mu_{G^{\lbe\der}},\e(\e\chi\e\circ\e l\e)_{G^{\lbe\der}}}\be\bg_{m,\e G^{\lbe\der}\be,\e r}\e]=(e\circ l^{\le (*)}\be(S\lle))(\chi),
\]
whence $d^{\e\prime}=e\circ l^{\le (*)}\be(\lle S\lle)$, as claimed. Thus $e$ is an isomorphism. Now, by Proposition \ref{qt} and Corollary \ref{kcor}(i) applied to the sequence \eqref{ggg}, the map $\pic \iota\colon \pic G\to\pic G^{\der}$ is injective. On the other hand, the commutative diagram \eqref{tia} and the surjectivity of $d^{\e\prime}$ \eqref{rs} show that $\pic \iota$ is surjective as well, which completes the proof.
\end{proof}

\begin{remarks} \indent
\begin{enumerate}
\item[(a)] In the case $S=\spec k$, where $k$ is a separably closed field, the preceding argument yields a new proof of the ``well-known" facts $\mu^{\le *}\be(k)=\pic G^{\der}=\pic G$ \cite[Lemma 6.9(iii), p.~41, and (6.11.4), p.~43]{san}.
\item[(b)] Under the hypotheses of the proposition, the above proof and the commutativity of diagram \eqref{tia2} show that the following diagram of \'etale sheaves on $S$ commutes
\begin{equation}\label{tia3}
\xymatrix{ 
T^{\e*}\ar[d]_(.45){l^{\le (*)}}\ar[r]^(.4){\bm{d}}&\picgs\ar[d]^(.45){\pic_{\!S}(\le \iota)}_(.45){\simeq}\\
\mu^{*}\ar[r]^(.35){\bm{e}}_(.35){\simeq}&\pic_{\be G^{\der}\!/\lbe S},
}
\end{equation}
where $\bm{d}$ and $\bm{e}$ are the maps \eqref{bme} and \eqref{vcm2}, respectively.
\end{enumerate}
\end{remarks}

\begin{corollary}\label{cder} Let $S$ be a locally noetherian regular scheme and let $G$ be a reductive $S$-group scheme such that $G^{\der}$ is simply connected. Then
\[
\picgs=0.
\]
Consequently, there exists a canonical isomorphism in $\dbs$
\[
\upicgs\isoto\ugs[1].
\]
\end{corollary}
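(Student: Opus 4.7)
The proof is almost immediate given the work already done. The plan has two short steps.

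\textbf{Step 1: Vanishing of $\picgs$.} I would simply invoke Proposition \ref{key1}, which gives a canonical isomorphism of \'etale sheaves on $S$
\[
\picgs\simeq\mu^{\lbe *},
\]
where $\mu$ is the fundamental group of $G^{\der}$. By the convention recalled in the paragraph preceding Proposition \ref{wk}, $G^{\der}$ is simply connected precisely when $\mu=0$. Under our hypothesis, $\mu=0$, so $\mu^{\lbe *}=0$ and therefore $\picgs=0$.

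\textbf{Step 2: The isomorphism $\upicgs\isoto\ugs[1]$.} The structural morphism $\pi_{\lbe G}\colon G\to S$ admits the unit section $\varepsilon_{G}\colon S\to G$, so it is faithfully flat and hence schematically dominant (as noted in Subsection \ref{gen}). Therefore the distinguished triangle \eqref{t6} applies and reads
\[
\picgs[-1]\to\ugs[1]\to\upicgs\to\picgs.
\]
Since $\picgs=0$ by Step 1, the middle arrow $\ugs[1]\to\upicgs$ is an isomorphism in $\dbs$, which is the desired canonical isomorphism.

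There is no real obstacle here; the entire content has been packaged into Proposition \ref{key1} and the formalism of the distinguished triangle \eqref{t6}. The only point worth stating explicitly in the write-up is that $\pi_{\lbe G}$ is schematically dominant (so that one may use \eqref{t6} rather than the more general \eqref{t5}).
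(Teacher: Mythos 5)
Your proof is correct and follows exactly the paper's argument: the vanishing of $\picgs$ is immediate from Proposition \ref{key1} since $\mu=0$, and the isomorphism then falls out of the triangle \eqref{t6} (the paper takes the inverse of the arrow $\ugs[1]\to\upicgs$ to match the stated direction). The only cosmetic quibble is that faithful flatness of $\pi_{\lbe G}$ comes from smoothness plus surjectivity (the section gives surjectivity), not from the existence of the section alone.
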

\begin{proof} The first assertion of the corollary is immediate from the proposition since $\mu=0$. The triangle \eqref{t6} yields an isomorphism $\ugs[1]\isoto\upicgs$ whose inverse is the isomorphism of the corollary.
\end{proof}

We now apply Lemma \ref{bseq} to the middle row of diagram \eqref{fund}, i.e., the given $t$-resolution $(\s R\e)$ of $G$ \eqref{tres}, and obtain 
an exact sequence of $S$-group schemes of multiplicative type
\begin{equation}\label{fds}
1\to\mu\overset{\! l}{\to}  T\overset{\!\rho}{\to} R\to G^{\tor}\to 1,
\end{equation}
where $\rho$ is the map \eqref{ro}. The latter sequence induces an exact sequence of \'etale twisted constant $S$-group schemes
\begin{equation}\label{fdsd}
1\to (G^{\tor}\le)^{*}\to R^{\e *}\overset{\!\rho^{\lle (*)}}{\to} T^{\e *}\overset{\! l^{\le (*)}}{\to} \mu^{*}\to 1.
\end{equation}
We now consider the following object of $\cbs$ 
\[
\pi_{1}^{\le D}\be(\s R\e)=C^{\e\bullet}\be\big(\lle\rho^{\le (*)}\big)=(\e R^{\e*}\overset{\rho^{\le(*)}}\to T^{\e *}),
\]
where $R^{\e*}$ and $T^{\e*}$ are placed in degrees $-1$ and $0$, respectively.

\begin{lemma} Let $G$ be a reductive $S$-group scheme. Then a morphism $\phi\colon \s R_{1}\to \s R_{2}$ of $t$-resolutions of $G$ induces a quasi-isomorphism $\pi_{1}^{\le D}\be(\s R_{2}\e)\isoto\pi_{1}^{\le D}\be(\s R_{1}\e)$ in $\cbs$ such that, for $i=-1$ and $0$, the induced isomorphisms of \'etale sheaves on $S$
\[
H^{\le i}(\pi_{1}^{\le D}\be(\s R_{2}\e))\isoto H^{\le i}(\pi_{1}^{\le D}\be(\s R_{1}\e))
\]
are independent of the choice of $\phi$.
\end{lemma}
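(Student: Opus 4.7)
The plan is to read off the quasi-isomorphism from the fact that the cohomology of $\pi_{1}^{\le D}\be(\s R\e)$ is computed, via the four-term exact sequence \eqref{fds}, in terms of invariants of $G$ alone.

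A morphism $\phi\colon \s R_{1}\to \s R_{2}$ of $t$-resolutions of $G$ is, by definition, a commutative diagram
\[
\xymatrix{1\ar[r]&T_{1}\ar[d]^{\phi_{T}}\ar[r]&H_{1}\ar[d]^{\phi_{H}}\ar[r]&G\ar@{=}[d]\ar[r]&1\\
1\ar[r]&T_{2}\ar[r]&H_{2}\ar[r]&G\ar[r]&1,
}
\]
which, by functoriality of $(-)^{\lbe\tor}$, induces a morphism $\phi_{R}\colon R_{1}=H_{1}^{\lbe\tor}\to H_{2}^{\lbe\tor}=R_{2}$. Dualizing the morphisms $\phi_{T}$ and $\phi_{R}$ yields a morphism of complexes in $\cbs$
\[
\pi_{1}^{\le D}\be(\s R_{2}\e)\;=\;\bigl(R_{2}^{\le *}\overset{\rho_{2}^{\le(*)}}\lra T_{2}^{\le *}\bigr)\;\overset{(\phi_{R}^{\le(*)},\,\phi_{T}^{\le(*)})}{\lra}\;\bigl(R_{1}^{\le *}\overset{\rho_{1}^{\le(*)}}\lra T_{1}^{\le *}\bigr)\;=\;\pi_{1}^{\le D}\be(\s R_{1}\e),
\]
which is the required morphism.

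To prove it is a quasi-isomorphism, I would apply the functorial four-term exact sequence \eqref{fds} to both $\s R_{1}$ and $\s R_{2}$. Since $\mu$ (the fundamental group of $G^{\der}$) and $G^{\le\tor}$ depend only on $G$ and not on the chosen $t$-resolution, the morphism $\phi$ produces an exact and commutative diagram
\[
\xymatrix{1\ar[r]&\mu\ar@{=}[d]\ar[r]&T_{1}\ar[d]^{\phi_{T}}\ar[r]^{\rho_{1}}&R_{1}\ar[d]^{\phi_{R}}\ar[r]&G^{\lbe\tor}\ar@{=}[d]\ar[r]&1\\
1\ar[r]&\mu\ar[r]&T_{2}\ar[r]^{\rho_{2}}&R_{2}\ar[r]&G^{\lbe\tor}\ar[r]&1
}
\]
of \'etale sheaves on $S$. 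Dualizing yields the exact and commutative diagram
\[
\xymatrix{1\ar[r]&(G^{\lbe\tor})^{*}\ar@{=}[d]\ar[r]&R_{2}^{\le *}\ar[d]^{\phi_{R}^{\le(*)}}\ar[r]^{\rho_{2}^{\le(*)}}&T_{2}^{\le *}\ar[d]^{\phi_{T}^{\le(*)}}\ar[r]^{l_{2}^{\le(*)}}&\mu^{\le *}\ar@{=}[d]\ar[r]&1\\
1\ar[r]&(G^{\lbe\tor})^{*}\ar[r]&R_{1}^{\le *}\ar[r]^{\rho_{1}^{\le(*)}}&T_{1}^{\le *}\ar[r]^{l_{1}^{\le(*)}}&\mu^{\le *}\ar[r]&1,
}
\]
in which the outer columns are identities. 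Reading off kernels and cokernels of the middle maps now identifies $H^{-1}$ of both complexes with $(G^{\lbe\tor})^{*}$ and $H^{\e 0}$ of both complexes with $\mu^{\le *}$, and shows that the induced morphisms $H^{-1}(\pi_{1}^{\le D}\be(\s R_{2}\e))\to H^{-1}(\pi_{1}^{\le D}\be(\s R_{1}\e))$ and $H^{\e 0}(\pi_{1}^{\le D}\be(\s R_{2}\e))\to H^{\e 0}(\pi_{1}^{\le D}\be(\s R_{1}\e))$ are the respective identities. This simultaneously proves that the morphism is a quasi-isomorphism and that, after the canonical identifications $H^{-1}\simeq (G^{\lbe\tor})^{*}$ and $H^{\e 0}\simeq \mu^{\le *}$ (which are themselves intrinsic to $G$ and not to $\phi$), the induced isomorphisms on cohomology are the identity and hence independent of $\phi$.

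The only subtlety I foresee is the verification that the induced maps on the outer terms $\mu$ and $G^{\lbe\tor}$ are indeed the identity rather than merely canonical isomorphisms. This ultimately reduces to the functoriality of the fundamental group of the semisimple part and of the toric quotient, together with the fact that both $t$-resolutions cover the same $G$ via the commutativity of the right-hand square of $\phi$; these are straightforward but worth recording explicitly.
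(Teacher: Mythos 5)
Your construction of the quasi-isomorphism is the same as the paper's (dualize the morphism of two-term complexes $(T_{1}\to R_{1})\to(T_{2}\to R_{2})$ and read off kernels and cokernels from the four-term sequence \eqref{fds}), but your independence argument takes a genuinely different route. The paper compares two arbitrary morphisms $\phi,\psi\colon\s R_{1}\to\s R_{2}$ directly, invoking the structure result (from the proof of Lemma 2.7 of [BGA]) that $\phi_{(H)}$ and $\psi_{(H)}$ differ by a morphism $\alpha\colon H_{1}\to T_{2}$ factoring through $R_{1}$; since such an $\alpha$ kills $\krn(T_{1}\to R_{1})$ and lands in $\img\e\rho_{\le 2}$, the two induced maps on kernel and cokernel coincide. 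You instead prove the stronger statement that \emph{each} $\phi$ induces the identity on the outer terms $\mu$ and $G^{\tor}$ of \eqref{fds}, from which independence is immediate; this is more informative but shifts the burden onto the commutativity of the outer squares of your four-term diagram, which you flag as a ``subtlety'' without proving. That verification is the one real gap, and it should be recorded: for the right-hand square, $\phi_{(R)}$ commutes with the projections $R_{\lle i}\to G^{\tor}$ because $\phi_{(H)}$ commutes with the projections $H_{i}\to G$ and $(-)^{\tor}$ is functorial. For the left-hand square one needs that $\phi_{(H)}$ restricts on $H_{1}^{\der}\cong\Gtil$ to the identity of $H_{2}^{\der}\cong\Gtil$ over $G^{\der}$ (whence the identity on $\mu=\krn(\Gtil\to G^{\der})=\krn(T_{i}\to R_{\lle i})$): the restriction is a morphism $\Gtil\to\Gtil$ over $G^{\der}$ preserving unit sections, so $g\mapsto\phi_{(H)}(g)\e g^{-1}$ is a scheme morphism from $\Gtil$ to the finite $S$-group scheme $\mu$; since $\Gtil$ has connected and reduced (smooth) fibers and the unit section maps to the unit, this morphism is trivial. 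With that rigidity argument supplied, your proof is complete and correct.
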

\begin{proof} Let $\s R_{\le i}\colon 1\to T_{i}\to H_{i}\to G\to 1$, where $i=1$ and $2$, be the given $t$-resolutions of $G$. By \eqref{fds}, the morphism of complexes $(\phi_{(\le T\le)},\phi_{(\le R\le)})\colon (T_{1}\to R_{1})\to (T_{2}\to R_{2})$ is a quasi-isomorphism. Further, if $\psi\colon \s R_{1}\to \s R_{2}$ is another morphism of $t$-resolutions, then the morphisms $\phi_{(H)}, \psi_{(H)}\colon H_{1}\to H_{2}$ differ by a morphism of $S$-group schemes $\alpha\colon H_{1}\to T_{2}$ that factors through $R_{1}$ \cite[proof of Lemma 2.7]{bga}. Thus, since $\alpha$ is trivial on $\krn(T_{1}\to R_{1})$, we conclude that the two isomorphisms $\krn(T_{1}\to R_{1})\isoto\krn(T_{2}\to R_{2})$ (respectively, $\cok(T_{1}\to R_{1})\isoto \cok(T_{2}\to R_{2})$) induced by $\phi_{(H)}$ and $\psi_{(H)}$ are equal. The lemma now follows from the above by duality, i.e., by applying to the preceding considerations the exact functor $M\to M^{\e*}$, where $M$ denotes an $S$-group scheme (of finite type and) of multiplicative type.
\end{proof}

\begin{lemma} Let $G$ be a reductive $S$-group scheme and let $\s R_{1}$ and $\s R_{2}$ be two $t$-resolutions of $G$. Then $\pi_{1}^{\le D}\be(\s R_{2}\e)$ and $\e\pi_{1}^{\le D}\be(\s R_{1}\e)$ are canonically isomorphic in $\dbs$.
\end{lemma}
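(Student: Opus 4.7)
The plan is to construct a common refinement of $\s R_1$ and $\s R_2$ and then invoke the preceding lemma. Writing $\s R_i \colon 1 \to T_i \to H_i \to G \to 1$ for $i = 1,2$, I would form the fiber product $H_3 = H_1 \times_{G} H_2$ together with its two projections $\pi_{\lbe i}\colon H_3 \to H_i$; the kernel of $H_3 \to G$ is naturally identified with $T_1 \times_{\be S}\be T_2$, which is central in $H_3$, so that $1 \to T_1 \times_{\be S}\be T_2 \to H_3 \to G \to 1$ is a central extension. A standard descent argument using that each $H_i\to G$ is smooth and surjective shows that $H_3$ is again a reductive $S$-group scheme with connected geometric fibers.

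The key technical step is to verify that $H_3^{\be\der}$ is simply connected. For each $i \in \{1,2\}$, the projection $\pi_{\lbe i}$ sits in a central extension $1 \to T_{3-i} \to H_3 \to H_i \to 1$ whose kernel is an $S$-torus. Lemma \ref{bseq} applied to this extension yields an exact sequence of $S$-group schemes of multiplicative type whose relevant portion reads
\[
1\to \mu_{\le H_3}\to \mu_{\le H_i}\to T_{3-i}\to H_3^{\tor}\to H_i^{\tor}\to 1,
\]
where $\mu_{\le H_j}$ denotes the fundamental group of $H_j^{\be\der}$. Since $\mu_{\le H_i}=0$ by the hypothesis on $\s R_i$, we conclude $\mu_{\le H_3}=0$, i.e., $H_3^{\be\der}$ is simply connected. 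Consequently $\s R_3 \colon 1 \to T_1 \times_{\be S}\be T_2 \to H_3 \to G \to 1$ is a $t$-resolution of $G$ and the projections $\pi_{\lbe i}$ yield morphisms of $t$-resolutions $\s R_3 \to \s R_i$ for $i=1,2$.

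Applying the preceding lemma to each of $\pi_{\lbe 1}$ and $\pi_{\lbe 2}$ yields quasi-isomorphisms $\pi_{1}^{\le D}\lbe(\s R_i) \isoto \pi_1^{\le D}\lbe(\s R_3)$ in $\cbs$, hence isomorphisms in $\dbs$. Composing the one for $i=1$ with the inverse of the one for $i=2$ furnishes the desired isomorphism $\pi_1^{\le D}\lbe(\s R_1) \isoto \pi_1^{\le D}\lbe(\s R_2)$ in $\dbs$. Independence of this isomorphism on the choice of $\s R_3$ follows by iterating the construction: given two such common refinements, produce a further common refinement of them by the same fiber-product procedure and invoke the independence-on-cohomology statement of the preceding lemma. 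The hard part is the verification that $H_3^{\be\der}$ is simply connected; once Lemma \ref{bseq} is invoked, this reduces to a short diagram chase, and the remainder of the argument is essentially formal.
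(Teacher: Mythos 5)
Your proposal is correct and takes essentially the same route as the paper, which simply defers to the proof of \cite[Proposition 2.10]{bga}: that proof is precisely your fibre-product construction $H_{3}=H_{1}\times_{G}H_{2}$, with the simple connectedness of $H_{3}^{\der}$ extracted from the central extensions $1\to T_{3-i}\to H_{3}\to H_{i}\to 1$ (via Lemma \ref{bseq}, exactly as you do) and with the preceding lemma supplying both the quasi-isomorphisms $\pi_{1}^{\le D}(\s R_{i})\isoto\pi_{1}^{\le D}(\s R_{3})$ and the independence of the resulting isomorphism from the choices made. Nothing further is needed.
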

\begin{proof} The proof is similar to the proof of \cite[Proposition 2.10]{bga}\e, using the previous lemma in place of \cite[Lemma 2.7]{bga}.
\end{proof}

\begin{definition} \label{def} Let $G$ be a reductive $S$-group scheme. Using the preceding lemma, we shall henceforth identify the objects $\pi_{1}^{\le D}\be(\s R\e)\in\dbs$ as $\s R$ ranges over the family of all $t$-resolutions of $G$. Their common value will be denoted by $\pi_{1}^{\le D}\be(G\e)$ and called the {\it dual algebraic fundamental complex of $G$}. Thus $\pi_{1}^{\le D}\be(G\e)=\pi_{1}^{\le D}\be(\s R\e)\in \dbs$ for any $t$-resolution $\s R$ of $G$.
\end{definition}

A morphism of reductive $S$-group schemes $\varphi\colon G^{\e\prime}\to G$ induces a morphism $\pi_{1}^{\le D}\be(\varphi)\colon\pi_{1}^{\le D}\be(G\e)\to\pi_{1}^{\le D}\be(G^{\e\prime}\e)$ in $\dbs$. Thus we obtain a contravariant functor $\pi_{1}^{\le D}$ from the category of reductive $S$-group schemes to $\dbs$. We will show next that $\pi_{1}^{\le D}$ is {\it exact}, i.e., transforms short exact sequences of reductive $S$-group schemes into distinguished triangles in $\dbs$. To this end, we prove

\begin{lemma}\label{trl} Let $S$ be a scheme and let $1\to G_{1}\to G_{2}\to G_{3}\to 1$	be an exact sequence of reductive $S$-group schemes. Then there exists an exact and commutative diagram of reductive $S$-group schemes 
\begin{equation}\label{trl0}
\xymatrix{ &1\ar[d] &1\ar[d] &1\ar[d] &\\
1\ar[r] & T_{1}\ar[r]\ar[d] & H_{1}\ar[r]\ar[d] & G_{1}\ar[r]\ar[d] &1\\
1\ar[r] & T_{2}\ar[r]\ar[d] & H_{2}\ar[r]\ar[d]  & G_{2}\ar[r]\ar[d]  &1\\
1\ar[r] & T_{3}\ar[r]\ar[d] & H_{3}\ar[r]\ar[d]& G_{3}\ar[r]\ar[d]&1,\\
&1 &1 &1 & 
}
\end{equation}
where the top, middle and bottom rows are $t$-resolutions of $G_{1}$, $G_{2}$ and $G_{3}$, respectively.
\end{lemma}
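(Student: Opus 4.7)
The plan is to construct the diagram \eqref{trl0} by carrying out the pushout construction \eqref{pout} simultaneously for $G_{1}, G_{2}, G_{3}$ in a compatible fashion.

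First, I would apply \cite[Proposition 2.10]{ga2} to the given exact sequence to obtain an exact sequence $1\to \tilde{G}_{1}\to \tilde{G}_{2}\to \tilde{G}_{3}\to 1$ of simply connected central covers, compatible with the sequence of derived groups $G_{i}^{\der}$. Combining this with Lemma \ref{bseq}, a snake-lemma argument applied to the defining extensions $1\to\mu_{i}'\to \rad(G_{i})\times_{S}\tilde{G}_{i}\to G_{i}\to 1$ yields an exact sequence of finite $S$-group schemes of multiplicative type
\[
1\to \mu_{1}'\to \mu_{2}'\to \mu_{3}'\to 1.
\]

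The heart of the argument is then to construct compatible closed immersions $\mu_{i}'\hookrightarrow T_{i}$ into $S$-tori such that the $T_{i}$ fit into an exact sequence $1\to T_{1}\to T_{2}\to T_{3}\to 1$. Passing to character duals, this amounts to a horseshoe-type lifting in the category of \'etale twisted constant $S$-group schemes: one chooses surjections $T_{1}^{*}\onto (\mu_{1}')^{*}$ and $T_{3}^{*}\onto (\mu_{3}')^{*}$ from finitely generated free objects (for instance, by pulling back free modules along a suitable \'etale Galois cover of $S$ that trivializes the relevant finite \'etale sheaves), sets $T_{2}^{*}=T_{1}^{*}\oplus T_{3}^{*}$, and defines the surjection $T_{2}^{*}\onto (\mu_{2}')^{*}$ by combining the composition $T_{3}^{*}\onto(\mu_{3}')^{*}\into (\mu_{2}')^{*}$ with a lift of $T_{1}^{*}\onto (\mu_{1}')^{*}$ through the surjection $(\mu_{2}')^{*}\onto (\mu_{1}')^{*}$. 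Such a lift exists because $T_{1}^{*}$ is projective in the relevant category (\'etale-locally a finite free $\Z$-module). Dualizing back yields the desired exact sequence of tori together with compatible embeddings $\mu_{i}'\hookrightarrow T_{i}$.

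Finally, with this compatible data at hand, I would apply the pushout construction \eqref{pout} to produce $H_{i}=(\rad(G_{i})\times_{S}\tilde{G}_{i})\wedge^{i_{i}',\,\mu_{i}',\,l_{i}'}\! T_{i}$ together with a $t$-resolution $1\to T_{i}\to H_{i}\to G_{i}\to 1$ for each $i$. The functoriality of the pushout with respect to morphisms of its three inputs yields the commutative diagram \eqref{trl0}, and the exactness of the middle row $1\to H_{1}\to H_{2}\to H_{3}\to 1$ then follows from the exactness of the other two columns by the snake lemma (or a direct diagram chase). The main obstacle is the torus construction described in the preceding paragraph, since the category of $S$-tori is not abelian and the necessary lifting of embeddings must be arranged on the dual side via a horseshoe-type argument, with additional care required over a general base $S$ due to the \'etale twisted constant structure of the character groups.
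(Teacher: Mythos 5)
Your construction breaks down at the very first step: the sequence $1\to \mu_{1}'\to \mu_{2}'\to \mu_{3}'\to 1$ is \emph{not} exact in general, so there is nothing to run the horseshoe argument on. The snake-lemma argument you invoke would require the middle sequence $1\to \rad(G_{1})\times_{S}\widetilde{G}_{1}\to \rad(G_{2})\times_{S}\widetilde{G}_{2}\to \rad(G_{3})\times_{S}\widetilde{G}_{3}\to 1$ to be exact, and hence the sequence of radicals $1\to\rad(G_{1})\to\rad(G_{2})\to\rad(G_{3})\to 1$ to be exact; this fails. Take $1\to SL_{n}\to GL_{n}\overset{\det}{\to}\mathbb{G}_{m}\to 1$: here $\rad(SL_{n})=1$, $\rad(GL_{n})=\mathbb{G}_{m}$ (the scalars), and the induced map $\rad(GL_{n})\to\rad(\mathbb{G}_{m})$ is $z\mapsto z^{\le n}$, with kernel $\mu_{n}$. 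Computing the fundamental groups of the construction \eqref{pout} directly gives $\mu_{1}'=1$, $\mu_{2}'=\mu_{n}$, $\mu_{3}'=1$, so your claimed short exact sequence reads $1\to 1\to\mu_{n}\to 1\to 1$. Consequently the map $T_{2}^{\e*}=T_{1}^{\e*}\oplus T_{3}^{\e*}\to(\mu_{2}')^{*}$ you propose to assemble from the two outer surjections cannot be surjective in this example (it is the zero map), the resulting $\mu_{2}'\to T_{2}$ is not a closed immersion, and the pushout $H_{2}$ is then $G_{2}\times_{S}T_{2}$, whose derived group is $G_{2}^{\der}$ and is not simply connected. So the middle row would not be a $t$-resolution.

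The paper avoids constructing a compatible system of embeddings of the fundamental groups altogether. It first chooses a $t$-resolution $1\to T_{3}\to H_{3}\to G_{3}\to 1$ and an \emph{arbitrary} $t$-resolution $1\to T_{1}\to H\to G_{2}\to 1$ of $G_{2}$, then forms $H_{2}=H_{3}\times_{G_{3}}H$ with torus $T_{2}=T_{1}\times_{S}T_{3}$ (this is \cite[Proposition 2.8]{ga2}), and finally defines $H_{1}=H\times_{G_{2}}G_{1}$ by pullback, which is a $t$-resolution of $G_{1}$ with torus $T_{1}$. The column of tori is then the split sequence $1\to T_{1}\to T_{1}\times_{S}T_{3}\to T_{3}\to 1$, exact for free, and the real work is identifying $H_{1}$ with $H_{2}\times_{H_{3}}S$ so that the middle column is exact and the squares commute. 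If you want to salvage your approach you would need to replace the exactness of the $\mu_{i}'$ by something like the six-term sequence of Lemma \ref{bseq} and then argue much more carefully about which embeddings can be made compatible; the fiber-product route is substantially simpler.
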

\begin{proof} Let $1\to T_{3}\to H_{3}\to G_{3}\to 1$ be a $t$-resolution $G_{3}$
and let
\begin{equation} \label{arb}
1\to T_{1}\to H\overset{\!p}\to G_{2}\to 1
\end{equation}
be a $t$-resolution of $G_{2}$. Set $T_{2}=T_{1}\times_{S}T_{3}$ and $H_{2}=H_{3}\times_{G_{3}}H$, where $H\to G_{3}$ is the composition $H\to G_{2}\to G_{3}$. By \cite[Proposition 2.8 and its proof\e]{ga2}, there exists a commutative diagram with exact rows
\begin{equation}\label{aux}
\xymatrix{1\ar[r] & T_{1}\ar[r] & H\ar[r]^{p} & G_{2}\ar[r]\ar@{=}[d] &1\\
1\ar[r] & T_{2}\ar[u]\ar[r]^{j_{\le 2}}\ar[d] & H_{2}\ar[u]^{q}\ar[r]^{p_{\le 2}}\ar[d]  & G_{2}\ar[r]\ar[d]  &1\\
1\ar[r] & T_{3}\ar[r]\ar[d] & H_{3}\ar[r]\ar[d]& G_{3}\ar[d]\ar[r]&1,\\
&1 &1 &1 & 
}
\end{equation}
where the middle row is a $t$-resolution of $G_{2}$. Note that, by the definition of $H_{2}$, there exists a canonical exact and commutative diagram
\begin{equation}\label{note}
\xymatrix{&& H_{2}\times_{H_{3}}S\ar@{^{(}->}[d]\ar[r]^(.5){\widetilde{p}_{\le 0}}_(.5){\sim} & H\times_{G_{3}}S\ar@{^{(}->}[d]\\
1\ar[r] & T_{3}\ar@{=}[d]\ar[r] & H_{2}\ar[d]\ar[r]^{\widetilde{p}} & H\ar[r]\ar[d]  &1\\
1\ar[r] & T_{3}\ar[r] & H_{3}\ar[r]& G_{3}\ar[r]&1.
}
\end{equation}
Now set $H_{1}=H\!\times_{G_{2}}\! G_{1}$. Then the pullback of \eqref{arb} along $G_{1}\to G_{2}$ is an exact sequence of reductive $S$-groups schemes
\[
1\to T_{1}\overset{\!\be j_{\lle 1}}\to H_{1}\overset{\!\be p_{\lle 1}}\to G_{1}\to 1.
\]
Next let $\nu\colon H_{1}\isoto H\!\times_{G_{3}}\! S$ be the composition of canonical isomorphisms
\begin{equation}\label{nu}
H_{1}=H\! \times_{G_{2}}\!  G_{1}\isoto H\! \times_{G_{2}}\! (G_{2}\! \times_{G_{3}}\! S\e)\isoto H\! \times_{G_{3}}\! S
\end{equation}
and consider the composition
\begin{equation}\label{psip}
\psi\colon H_{1}\overset{\!\nu}{\underset{\!\sim}{\to}} H\!\times_{G_{3}}\! S\overset{\!\widetilde{p}_{\le 0}^{\e-1}}{\underset{\!\sim}{\lra}}H_{2}\! \times_{H_{3}}\! S\hookrightarrow H_{2}.
\end{equation}
Then $1\to H_{1}\overset{\psi}\to H_{2}\to H_{3}\to 1$ is an exact sequence of reductive $S$-group schemes. Thus the rows and columns of the following diagram of reductive $S$-group schemes are exact:
\begin{equation}\label{trl1}
\xymatrix{ &1\ar[d] &1\ar[d] &1\ar[d] &\\
1\ar[r] & T_{1}\ar @{} [dr]|{\rm (\e \bf{I}\e)}\ar[r]^{j_{\lle 1}}\ar[d]_{r}& H_{1}\ar @{} [dr]|{\rm (\e \bf{II}\e)}\ar[r]^{p_{\le 1}}\ar[d]_{\psi} & G_{1}\ar[r]\ar[d]^{i} &1\\
1\ar[r] & T_{2}\ar[r]^{j_{\lle 2}}\ar[d] & H_{2}\ar[r]^{p_{\le 2}}\ar[d]  & G_{2}\ar[r]\ar[d]  &1\\
1\ar[r] & T_{3}\ar[r]\ar[d] & H_{3}\ar[r]\ar[d]& G_{3}\ar[r]\ar[d]&1.\\
&1 &1 &1 & 
}
\end{equation}
An application of \cite[Corollary 2.11]{ga2} to the middle column above shows at once that $H_{1}^{\der}$ is simply connected. Thus the top, middle and bottom rows of diagram \eqref{trl1} are $t$-resolutions of $G_{1}$, $G_{2}$ and $G_{3}$, respectively. Further, by the commutativity of \eqref{aux}, the lower half of diagram \eqref{trl1} commutes. Thus it remains only to check that the squares labeled ({\bf I}) and ({\bf II}) above commute. By the definitions of $H_{1}$ and $\nu$ \eqref{nu} and the commutativity of diagrams \eqref{aux} and \eqref{note}, there exists an exact and commutative diagram of reductive $S$-group schemes 
\[
\xymatrix{H_{2}\times_{H_{3}}S\ar@{^{(}->}[d]\ar[r]^(.5){\widetilde{p}_{\le 0}}_(.5){\sim} & H\times_{G_{3}}S\ar@{^{(}->}[d]\ar[r]^(.6){\nu^{-1}}_(.55){\sim}& H_{1}\ar@{^{(}->}[d]\ar[r]^{p_{\le 1}}& G_{1}\ar@{^{(}->}[d]^{i}\\
\ar@/_1pc/[rrr]_(.45){p_{\le 2}}H_{2}\ar[d]\ar[r]^{\widetilde{p}} & H\ar@{=}[r]&H\ar[d]\ar[r]^{p}& G_{2}\\
H_{3}\ar[rr]&& G_{3}.&&
}
\]
Thus, by the definition of $\psi$ \eqref{psip}, we have
\[
p_{\le 2}\circ \psi=p_{\le 2}\!\!\mid_{H_{2}\times_{H_{3}}S}\circ\,\widetilde{p}_{\le 0}^{\le -1}\circ \nu= (\e i\circ p_{\le 1}\circ  \nu^{-1}\circ \widetilde{p}_{\le 0}\e)\circ \widetilde{p}_{\le 0}^{\le -1}\circ \nu=i\circ p_{\le 1},
\]
i.e., the square labeled ({\bf II}) in diagram \eqref{trl1} commutes. The commutativity of the square labeled ({\bf I}) in \eqref{trl1} follows from the commutativity of the following diagram:
\[
\xymatrix{T_{1}\ar@/^1.3pc/[rr]^(.5){j_{1}}\ar[ddd]_(.45){r}\ar[r]^{\sim} &H_{1}\! \times_{G_{1}}\! S\,\ar[d]^{\sim}\ar@{^{(}->}[r]& H_{1}\ar[d]^{\nu}_{\sim}\ar@/^2.5pc/[ddd]^(.45){\psi}\\
& H\!\times_{G_{2}}\! S\,\ar@{^{(}->}[d]\ar@{^{(}->}[r]&  H\! \times_{G_{3}}\! S\ar[d]^{\widetilde{p}_{\le 0}^{\, -1}}_{\sim}\\
&(H\! \times_{G_{2}}\! S\e)\! \times_{S}\! (H_{3}\! \times_{G_{3}}\! S\e)\ar[d]^{\sim}&  H_{2}\! \times_{H_{3}}\! S\ar@{^{(}->}[d]\\
T_{2}\ar@/_1.3pc/[rr]_(.5){j_{2}}\ar[r]^{\sim}& H_{2}\! \times_{G_{2}}\! S\,\ar@{^{(}->}[r]&H_{2}.
}
\]
\end{proof}

\begin{proposition}\label{dex} Let $S$ be a scheme and let $1\to G_{1}\to G_{2}\to G_{3}\to 1$ be an exact sequence of reductive $S$-group schemes. Then the given sequence induces a distinguished triangle in $\dbs$
\[
\pi_{1}^{D}\lbe(G_{3}\e)\to \pi_{1}^{D}\lbe(G_{2}\e)\to \pi_{1}^{D}\lbe(G_{1}\e)\to\pi_{1}^{D}\lbe(G_{3}\e)[1].
\]
\end{proposition}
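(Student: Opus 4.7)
The plan is to invoke Lemma \ref{trl} to produce compatible $t$-resolutions of the three groups, then dualize carefully and deduce the triangle from a short exact sequence of two-term complexes of \'etale sheaves.

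First, apply Lemma \ref{trl} to the given sequence $1\to G_{1}\to G_{2}\to G_{3}\to 1$ to obtain the diagram \eqref{trl0}, whose rows $\s R_{\le i}\colon 1\to T_{i}\to H_{i}\to G_{i}\to 1$ are $t$-resolutions of $G_{i}$ for $i=1,2,3$. By Definition \ref{def}, it suffices to produce the claimed distinguished triangle with $\pi_{1}^{\le D}\be(G_{i}\le)$ replaced by $\pi_{1}^{\le D}\be(\s R_{\le i}\le)=(R_{i}^{\e *}\overset{\rho_{i}^{\le(*)}}\to T_{i}^{\e *})$, where $R_{i}=H_{i}^{\lbe\tor}$ and $\rho_{i}\colon T_{i}\to R_{i}$ is the composition $T_{i}\to H_{i}\to R_{i}$ coming from \eqref{ro}.

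Next I would extract from \eqref{trl0} an exact and commutative diagram of $S$-tori
\[
\xymatrix{1\ar[r]&T_{1}\ar[d]_{\rho_{1}}\ar[r]&T_{2}\ar[d]_{\rho_{2}}\ar[r]&T_{3}\ar[d]^{\rho_{3}}\ar[r]&1\\
1\ar[r]&R_{1}\ar[r]&R_{2}\ar[r]&R_{3}\ar[r]&1.
}
\]
The top row is just the left column of \eqref{trl0}. For the bottom row, apply Lemma \ref{bseq} to the middle column $1\to H_{1}\to H_{2}\to H_{3}\to 1$: since each $H_{i}^{\lbe\der}$ is simply connected, $\mu(H_{i})=0$ for every $i$, and the long exact sequence of Lemma \ref{bseq} collapses to $1\to R_{1}\to R_{2}\to R_{3}\to 1$. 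Commutativity of the square is immediate from the construction of $\rho_{i}$ and the functoriality of the $(-)^{\lbe\tor}$ construction on reductive $S$-group schemes.

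Now apply the contravariant Cartier duality functor $M\mapsto M^{\e *}=\underline{\hom}_{\,S\text{-gr}}(M,\bg_{m,\le S})$, which is exact on $S$-group schemes of multiplicative type (see \cite[X]{sga3}), to the preceding diagram. We obtain an exact and commutative diagram of \'etale twisted constant $S$-group schemes
\[
\xymatrix{1\ar[r]&R_{3}^{\e *}\ar[d]_{\rho_{3}^{\le(*)}}\ar[r]&R_{2}^{\e *}\ar[d]_{\rho_{2}^{\le(*)}}\ar[r]&R_{1}^{\e *}\ar[d]^{\rho_{1}^{\le(*)}}\ar[r]&1\\
1\ar[r]&T_{3}^{\e *}\ar[r]&T_{2}^{\e *}\ar[r]&T_{1}^{\e *}\ar[r]&1,
}
\]
which, regarded as a diagram in $\ses$, is precisely a short exact sequence of two-term complexes
\[
0\to \pi_{1}^{\le D}\be(\s R_{3}\e)\to \pi_{1}^{\le D}\be(\s R_{2}\e)\to \pi_{1}^{\le D}\be(\s R_{1}\e)\to 0
\]
in $\cbs$ (placing $R_{i}^{\e *}$ in degree $-1$ and $T_{i}^{\e *}$ in degree $0$).

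Finally, a short exact sequence of complexes in $\cbs$ gives rise in the standard way to a distinguished triangle in $\dbs$, yielding
\[
\pi_{1}^{\le D}\be(G_{3}\e)\to \pi_{1}^{\le D}\be(G_{2}\e)\to \pi_{1}^{\le D}\be(G_{1}\e)\to\pi_{1}^{\le D}\be(G_{3}\e)[1]
\]
after identifying $\pi_{1}^{\le D}\be(\s R_{\le i}\e)$ with $\pi_{1}^{\le D}\be(G_{i}\e)$ via the previous lemma. The main subtlety, rather than the main obstacle, is verifying that this distinguished triangle is independent of all the choices made (namely, the choices of $t$-resolutions $\s R_{\le i}$ and the choices of vertical compatibilities in \eqref{trl0}); this follows from the well-definedness of $\pi_{1}^{\le D}\be(G\e)$ established in the two lemmas preceding Definition \ref{def}, which imply that any two compatible choices yield canonically quasi-isomorphic diagrams and hence the same triangle in $\dbs$.
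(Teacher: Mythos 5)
Your proposal is correct and follows essentially the same route as the paper: apply Lemma \ref{trl} to get compatible $t$-resolutions, use Lemma \ref{bseq} on the middle column $1\to H_{1}\to H_{2}\to H_{3}\to 1$ (where the vanishing of the fundamental groups collapses the six-term sequence to a short exact sequence of the $R_{i}$), dualize to get a short exact sequence of two-term complexes, and pass to the associated distinguished triangle. Your added remarks on why the sequence of tori is short exact and on independence of choices are accurate and consistent with the paper's treatment.
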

\begin{proof} By Lemma \ref{trl}\e, there exist $t$-resolutions $(\s R_{\le i}\e)\colon 1\to T_{i}\to H_{i}\to G_{i}\to 1$ of $G_{i}$, where $i=1,2$ or 3, and an exact and commutative diagram of reductive $S$-group schemes 
\[
\xymatrix{
1\ar[r] & T_{1}\ar[r]\ar[d] & T_{2}\ar[r]\ar[d] & T_{3}\ar[r]\ar[d] &1\\
1\ar[r] & H_{1}\ar[r] & H_{2}\ar[r]  & H_{3}\ar[r]  &1.
}
\]
The preceding diagram induces an exact and commutative diagram of $S$-tori 
\begin{equation}\label{bdi}
\xymatrix{
1\ar[r] & T_{1}\ar[r]\ar[d]^{\rho_{1}} & T_{2}\ar[r]\ar[d]^{\rho_{\le 2}} & T_{3}\ar[r]\ar[d]^{\rho_{\le 3}} &1\\
1\ar[r] & R_{\le 1}\ar[r] & R_{\le 2}\ar[r]  & R_{\le 3}\ar[r]  &1,
}
\end{equation}
where $R_{\le i}=H_{i}^{\lbe\tor}$, the bottom sequence is obtained by applying Lemma \ref{bseq} to the sequence $1\to H_{1}\to H_{2}\to H_{3}\to 1$ and the maps $\rho_{\le i}$ are the compositions $T_{i}\to H_{i}\to R_{\le i}$ \eqref{ro}. Now \eqref{bdi} induces an exact and commutative diagram of \'etale twisted constant $S$-group schemes
\[
\xymatrix{
1\ar[r] & R_{\le 3}^{\e*}\ar[r]\ar[d]^{\rho_{3}^{(*)}} & R_{\le 2}^{\e*}\ar[r]\ar[d]^{\rho_{2}^{(*)}} & R_{\le 1}^{\e*}\ar[r]\ar[d]^{\rho_{1}^{(*)}} &1\\
1\ar[r] & T_{3}^{\e*}\ar[r] & T_{2}^{\e*}\ar[r]  & T_{1}^{\e*}\ar[r]  &1
}
\]
which induces, in turn, an exact sequence in $\cbs$
\[
1\to \pi_{1}^{\le D}\be(\s R_{\le 3}\e)\to \pi_{1}^{\le D}\be(\s R_{\le 2}\e)\to \pi_{1}^{\le D}\be(\s R_{\le 1}\e)\to 1.
\]
The latter sequence induces a distinguished triangle in $\dbs$
\[
\pi_{1}^{\le D}\be(\s R_{\le 3}\e)\to \pi_{1}^{\le D}\be(\s R_{\le 2}\e)\to \pi_{1}^{\le D}\be(\s R_{\le 1}\e)\to \pi_{1}^{\le D}\be(\s R_{\le 3}\e)[\le 1\le],
\]
which yields the proposition.
\end{proof}

\begin{remark} Lemma \ref{trl} also yields a new proof of the exactness of the covariant functor $\pi_{1}(G)=\pi_{1}(\s R\e)=\cok\be[\e T_{*}\!\to\! R_{\le*}]$ on the category of reductive $S$-group schemes \cite[Theorem 3.8]{bga}. Indeed, diagram \eqref{bdi} induces an exact and commutative diagram of \'etale twisted constant $S$-group schemes
\[
\xymatrix{
1\ar[r] & (T_{1})_{*}\ar[r]\ar@{^{(}->}[d]^{\rho_{\le 1,\le(*)}} &  (T_{2})_{*}\ar[r]\ar@{^{(}->}[d]^{\rho_{\le 2,\le(*)}} &  (T_{3})_{*}\ar[r]\ar@{^{(}->}[d]^{\rho_{\le 3,\le(*)}}  &1\\
1\ar[r] & (R_{\le 1})_{*}\ar[r]& (R_{\le 2})_{*}\ar[r] & (R_{\e 3})_{*}\ar[r] & 1
}
\]
which immediately yields an exact sequence of \'etale twisted constant $S$-group schemes
\[
1\to\pi_{1}(G_{1}\e)\to\pi_{1}(G_{2}\e)\to\pi_{1}(G_{3}\e)\to 1.
\]
\end{remark}

\begin{proposition} \label{ctl} Let $S$ be a locally noetherian regular scheme, let $G$ be a reductive $S$-group scheme and let $(\s R\e)\colon 1\to T\overset{j}\to H\overset{p}{\to} G\to 1$ be a $t$-resolution of $G$. Then 
\begin{enumerate}
\item[(i)] the map ${\rm U}_{\be S}\lbe(\le p)\colon\ugs\!\to\! {\rm U}_{\lbe H\be/\lbe S}$ induces an isomorphism of \'etale sheaves on $S$
\[
\ugs\isoto \krn{\rm U}_{\be S}\lbe(\e j\le),
\]
and
\item[(ii)] there exists a canonical quasi-isomorphism in $\cbs$
\[
C^{\e\bullet}\lbe({\rm U}_{\be S}(\e j\e))\isoto \pi_{1}^{\le D}\be(\s R\e).
\]
\end{enumerate}
\end{proposition}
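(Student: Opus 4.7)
The plan is to deduce (i) directly from the left exactness of the character-sheaf functor applied to the $t$-resolution $(\s R\e)$, combined with Lemma \ref{gros}, and to prove (ii) by writing down an explicit morphism of complexes built from the isomorphisms $\omega$ of Lemma \ref{gros} together with the isomorphism $q_{(\lbe H\lbe)}^{(*)}$ of \eqref{iso0}.

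For (i), I would note that the sequence $1\to T\overset{j}\to H\overset{p}\to G\to 1$ is exact as fppf sheaves of groups (so in particular as \'etale sheaves after restriction), and applying the left-exact functor $\underline{\hom}_{\,S\text{-gr}}(-,\bg_{m,S})=(-)^{*}$ yields a left exact sequence of \'etale sheaves $0\to G^{\e*}\overset{p^{(*)}}\to H^{\e*}\overset{j^{(*)}}\to T^{\e*}$. Alternatively, one may invoke Corollary \ref{kcor}(ii) applied with $(F,G,H)=(G,H,T)$: the hypotheses hold since $T,H,G$ are smooth with connected fibers over the locally noetherian regular scheme $S$, and $T_{\eta}$ is $k(\eta)^{\rm s}$-rational (tori split over the separable closure). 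Since $S$ is reduced, Lemma \ref{gros} supplies natural isomorphisms $\omega_{G}\colon\ugs\isoto G^{\e*}$, $\omega_{H}\colon{\rm U}_{H\!/\le S}\isoto H^{\e*}$, and $\omega_{T}\colon{\rm U}_{T\!/\le S}\isoto T^{\e*}$. Transporting the left exact sequence through these isomorphisms yields the left exact sequence $0\to\ugs\overset{{\rm U}_{\be S}(\e p\e)}\to {\rm U}_{H\!/\le S}\overset{{\rm U}_{\be S}(\e j\e)}\to{\rm U}_{T\!/\le S}$, which is precisely the content of (i).

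For (ii), I would define a morphism of complexes
\[
\phi\colon C^{\bullet}\lbe({\rm U}_{\be S}(\e j\e))\lra \pi_{1}^{\le D}\be(\s R\e)
\]
by $\phi^{\le 0}=\omega_{T}\colon {\rm U}_{T\!/\le S}\isoto T^{\e*}$ in degree $0$ and
$\phi^{-1}=\bigl(q_{(\lbe H\lbe)}^{(*)}\bigr)^{\le-1}\!\!\circ\omega_{H}\colon{\rm U}_{H\!/\le S}\isoto H^{\e*}\isoto R^{\e*}$ in degree $-1$, where $q_{(\lbe H\lbe)}^{(*)}\colon R^{\e*}\isoto H^{\e*}$ is the isomorphism \eqref{iso0} applied to the reductive $S$-group scheme $H$ (note $R=H^{\lbe\tor}$). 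Commutativity of the defining square $\rho^{(*)}\circ\phi^{-1}=\phi^{\le 0}\circ{\rm U}_{\be S}(\e j\e)$ reduces, via the naturality of $\omega$ (i.e.\ $\omega_{T}\circ{\rm U}_{\be S}(\e j\e)=j^{(*)}\circ\omega_{H}$), to the identity $\rho^{(*)}=j^{(*)}\circ q_{(\lbe H\lbe)}^{(*)}$, which is the dual of the equality $\rho=q_{(\lbe H\lbe)}\circ j$ recorded in \eqref{ro}.

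Since both $\phi^{\le 0}$ and $\phi^{-1}$ are isomorphisms of \'etale sheaves, $\phi$ is in fact an \emph{isomorphism} in $\cbs$, and therefore a quasi-isomorphism in $\dbs$. Canonicity is transparent from the construction. There is no serious obstacle here: all the required facts --- Lemma \ref{gros}, the isomorphism \eqref{iso0}, the relation $\rho=q_{(\lbe H\lbe)}\circ j$, and the exactness of $(\s R\e)$ --- are already in place. One could also sanity-check the construction at the level of cohomology: $H^{-1}(\phi)$ identifies $\ugs=\krn{\rm U}_{\be S}(\e j\e)$ with $\krn\rho^{(*)}\simeq (G^{\tor})^{*}$ via $\omega_{G}$ and $\bigl(q_{(G)}^{(*)}\bigr)^{\le-1}$, while $H^{\e 0}(\phi)$ identifies $\cok{\rm U}_{\be S}(\e j\e)$ with $\cok\rho^{(*)}\simeq\mu^{\lbe*}$, consistent with Proposition \ref{key1}.
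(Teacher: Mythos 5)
Your argument is correct and follows essentially the same route as the paper: both parts rest on the character exact sequence attached to the $t$-resolution (obtained there via Corollary \ref{kcor}(ii) and $\pic_{\lbe H\lbe/\lbe S}=0$), the isomorphisms $\omega$ of Lemma \ref{gros}, the isomorphism \eqref{iso0} for $H$, and the identity $\rho=q_{\lle(\lbe H)}\circ j$, and the comparison morphism you write down is exactly the one constructed in the paper. Your closing observation that the map is in fact an isomorphism of complexes (both components being isomorphisms of sheaves) is a mild streamlining of the paper's three-row diagram chase, which the paper retains mainly because it also records the identification $\cok{\rm U}_{\be S}(\e j\le)\simeq\mu^{\e*}$ used later.
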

\begin{proof} Since $H^{\be\der}$ is simply connected, Corollary \ref{cder} shows that $\pic_{\!\lbe H\be/\lbe S}=0$. Thus Corollary \ref{kcor}(ii) applied to the given $t$-resolution $(\s R\e)$ yields a canonical exact sequence of \'etale sheaves on $S$
\begin{equation}\label{seq1}
1\to G^{\e*}\overset{\! p^{(*)}}{\to} H^{\le*}\overset{\! j^{(*)}}{\to}  T^{\e\le*}\!\overset{\!\be\bm{d}}\to\picgs\to 1,
\end{equation}
where $\bm{d}$ is the map \eqref{bme}. On the other hand, since ${\rm U}_{\lbe S}(\varepsilon_{G})$ is the zero morphism and $p\circ j=\varepsilon_{G}\circ\pi_{T}$, we have ${\rm U}_{\be S}(\e j\e)\circ {\rm U}_{\be S}(\e p\e)=0$.
Thus the sequence 	
\begin{equation}\label{seq3}
1\to \ugs\overset{\!\be {\rm U}_{\be S}\lbe(\e p)}\to {\rm U}_{\lbe H\be/\lbe S}\overset{\be {\rm U}_{\be S}\lbe(\e j\le)}\to {\rm U}_{\lbe T\be/\lbe S}\to\cok  {\rm U}_{\be S}\lbe(\e j\le)\to 1
\end{equation}
is a complex of \'etale sheaves on $S$. Now, since $p$ is faithfully flat and therefore schematically dominant, Lemma \ref{use} shows that ${\rm U}_{\be S}(\e p\e)$ is injective. Thus \eqref{seq3} can fail to be exact only at ${\rm U}_{\lbe H\be/\lbe S}$. Now consider the diagram
\begin{equation}\label{dia}
\xymatrix{
1\ar[r]& \ar @{} [dr]|{\rm (\e \bf{I}\e)}\ugs\ar[d]_(.47){\omega_{\le G}}^(.47){\simeq}\ar[r]^{{\rm U}_{\be S}(\e p\e)}& \ar @{} [dr]|{\rm (\e \bf{II}\e)}{\rm U}_{\lbe H\be/\lbe S}\ar[d]_(.47){\omega_{\le H}}^(.47){\simeq}\ar[r]^{{\rm U}_{\be S}(\e j\e)}& \ar @{} [dr]|{\rm (\e \bf{III}\e)}{\rm U}_{\lbe T\be/\lbe S}\ar[d]_(.47){\omega_{\e T}}^(.47){\simeq}\ar[r]&\cok  {\rm U}_{\be S}\lbe(\e j\le)\ar@{-->}[d]^{\simeq}\ar[r]&1\\
1\ar[r]& G^{\le*}\ar[d]_{\big(\be q_{(G)}^{(*)}\lbe\big)^{\!-1}}^{\simeq}\ar[r]^{p^{\le(*)}}&H^{\le*}\ar[d]_{\!\!\big(\be
q_{(H)}^{(*)}\lbe\big)^{\!-1}}^{\simeq}\ar[r]^{j^{\le(*)}}& T^{\e*}\ar@{=}[d]\ar[r]^(.4){\bm{d}}&\picgs\ar[d]_{\bm{e}^{-1}\circ\e\pic_{\!S}(\le \iota)}^{\simeq}\ar[r]&  1\\
1\ar[r]& (G^{\tor})^{*}\ar[r]& R^{\e *}\ar[r]^{\rho^{\le(*)}}&T^{\e*}\ar[r]^{l^{\e(*)}}\ar[r]& \mu^{\e *}\ar[r]& 1.
}
\end{equation}
The top row above is the complex \eqref{seq3}, and the middle and bottom rows are the exact sequences \eqref{seq1} and \eqref{fdsd}, respectively. The continuous vertical arrows in the upper half of the diagram are the maps \eqref{us}. In the lower half of the diagram, the left-hand and middle vertical arrows are the inverses of the maps \eqref{iso0} associated to $G$ and $H$, respectively. Further, the map $\bm{e}^{-1}\!\circ\!\pic_{\!S}(\le \iota)$ is an isomorphism by the proof of Proposition \ref{key1}. Using the definitions of the maps \eqref{us} (see \eqref{qmap}) and the identities $p\circ \varepsilon_{H}=\varepsilon_{G}$ and $j\circ \varepsilon_{T}=\varepsilon_{H}$, it is not difficult to check that the squares labeled ({\bf I}) and ({\bf II}) in \eqref{dia} commute. Thus the top row of diagram \eqref{dia}, i.e., the complex \eqref{seq3}, is exact, whence (i) follows. Further, there exists a unique way to define the discontinuous vertical arrow in diagram \eqref{dia} so that the resulting map is an isomorphism of \'etale sheaves on $S$ and the square labeled ({\bf III}) in \eqref{dia} commutes. Since the bottom squares in diagram \eqref{dia} also commute by the commutativity of diagrams \eqref{fund} and \eqref{tia3}, the entire diagram \eqref{dia} commutes, whence the map
$C^{\e\bullet}\lbe({\rm U}_{\be S}(\e j\e))\to C^{\e\bullet}\be\big(\lle\rho^{\lle (*)}\big)=\pi_{1}^{\le D}\be(\s R\e)$ with components $\big(\be q_{(H)}^{(*)}\lbe\big)^{\!-1}\!\be\circ\be\omega_{H}\colon {\rm U}_{\lbe H\be/\lbe S}\to R^{\e*}$ and $\omega_{\e T}\colon {\rm U}_{\lbe T\be/\lbe S}\to T^{\e*}$ (in degrees $-1$ and $0$, respectively) is a morphism of complexes. The diagram shows that the map just defined is a quasi-isomorphism in $\cbs$, which completes the proof.
\end{proof}

We may now prove the main theorem of the paper.

\begin{theorem}\label{ct} Let $S$ be a locally noetherian regular scheme and let $G$ be a reductive $S$-group scheme. Then there exists an isomorphism in $\dbs$
\[
\upicgs\isoto\pi_{1}^{\le D}\be(G\e)
\]
which is functorial in $G$.
\end{theorem}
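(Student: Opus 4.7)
The plan is to fix a $t$-resolution $(\mathcal{R})\colon 1\to T\overset{j}{\to} H\overset{p}{\to} G\to 1$ of $G$ and reduce the theorem to constructing a canonical isomorphism $\Phi_{\mathcal{R}}\colon\upicgs \isoto C^{\bullet}({\rm U}_{S}(j))$ in $\dbs$. Proposition \ref{ctl}(ii) supplies a quasi-isomorphism $C^{\bullet}({\rm U}_{S}(j))\isoto\pi_{1}^{\le D}(\mathcal{R})=\pi_{1}^{\le D}(G)$, so once $\Phi_{\mathcal{R}}$ is produced together with its independence of $\mathcal{R}$ and functoriality in $G$, the theorem follows. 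Since $H^{\der}$ is simply connected and $T$ (being a torus) has trivially simply connected derived subgroup, Corollary \ref{cder} gives canonical isomorphisms $\upic_{H/S}\simeq {\rm U}_{H/S}[1]$ and $\upic_{T/S}\simeq {\rm U}_{T/S}[1]$ in $\dbs$.

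To construct $\Phi_{\mathcal{R}}$, I would apply the contravariant functor $\upic_{S}$ to the morphisms $p$ and $j$. Using the identifications above, this yields a morphism $\alpha\colon\upicgs\to {\rm U}_{H/S}[1]$ and the map ${\rm U}_{S}(j)[1]\colon {\rm U}_{H/S}[1]\to {\rm U}_{T/S}[1]$. The composite $\upic_{S}(j)\circ\upic_{S}(p)=\upic_{S}(p\circ j)$ vanishes, because $p\circ j=\varepsilon_{G}\circ\pi_{T}$ factors through $S$ and $\upic_{S/S}=0$ tautologically. Rotating the distinguished triangle of the mapping cone of ${\rm U}_{S}(j)$ to the form
\[
C^{\bullet}({\rm U}_{S}(j))\to {\rm U}_{H/S}[1]\to {\rm U}_{T/S}[1]\to C^{\bullet}({\rm U}_{S}(j))[1],
\]
the vanishing of the composition permits a lift of $\alpha$ to a morphism $\Phi_{\mathcal{R}}\colon\upicgs\to C^{\bullet}({\rm U}_{S}(j))$.

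The next step is to verify that $\Phi_{\mathcal{R}}$ is a quasi-isomorphism by computing cohomology. Using the triangle \eqref{t6} for $\upicgs$ and the standard triangle for $C^{\bullet}({\rm U}_{S}(j))$, together with the compatibility diagram \eqref{cderr}, this reduces to two claims: that $H^{-1}(\Phi_{\mathcal{R}})\colon \ugs\isoto \krn{\rm U}_{S}(j)$ agrees with the isomorphism of Proposition \ref{ctl}(i), and that $H^{0}(\Phi_{\mathcal{R}})\colon\picgs\isoto\cok{\rm U}_{S}(j)$ matches the isomorphism $\cok{\rm U}_{S}(j)\simeq \mu^{*}\simeq\picgs$ supplied by diagram \eqref{dia} combined with $\bm{e}^{-1}\circ\pic_{S}(\iota)$ of Proposition \ref{key1}. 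Both identifications should follow by unwinding definitions. Functoriality in $G$ would then be obtained by using Lemma \ref{trl} to lift any morphism $\varphi\colon G'\to G$ to a morphism of $t$-resolutions, and then chasing the resulting commutative diagram; independence of $\mathcal{R}$ follows by specializing functoriality to $1_{G}$.

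The main obstacle I foresee is that the lift $\Phi_{\mathcal{R}}$ provided by the triangulated-category axioms is not automatically unique: two lifts differ by an element of $\Hom_{\dbs}(\upicgs,{\rm U}_{T/S})$, a group that need not vanish (it maps onto from $\Hom(\picgs,{\rm U}_{T/S})$ via the triangle for $\upicgs$). To make $\Phi_{\mathcal{R}}$ canonical and genuinely functorial, one either has to work at the level of explicit complexes in $\cbs$ (e.g.\ by choosing Godement-type resolutions so that the lifting problem becomes rigid), or to rigidify the construction using the additional fact that $p\colon H\to G$ is a $T$-torsor whose class is encoded by $\upic_{S}(p)$. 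Keeping track of these compatibilities under morphisms of $t$-resolutions, as produced by Lemma \ref{trl}, is likely to be the technically most demanding part of the argument.
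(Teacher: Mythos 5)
Your overall strategy coincides with the paper's: fix a $t$-resolution, identify $\upic_{H/S}$ with ${\rm U}_{H/S}[1]$ via Corollary \ref{cder}, lift $\upic_{S}(p)$ through the distinguished triangle of $C^{\e\bullet}({\rm U}_{S}(j))$ using the vanishing of $\upic_{S}(p\circ j)$, and conclude via Proposition \ref{ctl}. There are, however, two problems with the execution. First, the obstacle you single out as the main difficulty is not actually there: $\Hom_{\dbs}(\upicgs,{\rm U}_{T/S})$ \emph{does} vanish. By the triangle \eqref{t6} it is a quotient of $\Hom(\picgs,{\rm U}_{T/S})$, and by Proposition \ref{key1} and Lemma \ref{gros} the latter group is $\Hom(\mu^{*},T^{\e*})=0$, since $\mu^{*}$ is a finite (hence torsion) \'etale group scheme while $T^{\e*}$ is torsion-free. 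This vanishing is exactly the lever the paper uses (in the form $\Hom_{\dbs}(C^{\e\bullet}({\rm U}_{S}(j)),{\rm U}_{T/S})=\Hom(\mu^{*},T^{\e*})=0$), both to make the lift canonical and to prove functoriality; no Godement-type rigidification is needed, and your proposed workarounds are superfluous.

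Second, and more seriously, the claim that $H^{0}(\Phi_{\mathcal R})$ ``matches the expected isomorphism by unwinding definitions'' is a genuine gap. The lift is produced abstractly by the axioms of triangulated categories, and the only thing the relation $v\circ\Phi_{\mathcal R}=g\circ\upic_{S}(p)$ pins down is $H^{-1}(\Phi_{\mathcal R})$ (because $H^{-1}(v)$ is the inclusion of $\krn{\rm U}_{S}(j)$ into ${\rm U}_{H/S}$); it gives no control whatsoever over $H^{0}(\Phi_{\mathcal R})$, which a priori need be neither injective nor surjective. The paper closes this by constructing a second lift $\widetilde f$ in the opposite direction, invoking the Hom-vanishing above together with the uniqueness statement of \cite[Proposition 1.1.9]{fp} to force $f\circ\widetilde f=1$ on $C^{\e\bullet}({\rm U}_{S}(j))$, deducing that $H^{0}(f)$ is surjective, and then using that both $H^{0}$ sheaves are isomorphic to $\mu^{*}$, which has finite stalks, so that a surjection between them is automatically bijective. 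Some argument of this kind is indispensable: without it you have produced a morphism $\upicgs\to\pi_{1}^{\le D}(G)$ but not shown that it is an isomorphism.
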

\begin{proof} By definition of $\pi_{1}^{\le D}\be(G\e)$ (see Definition \ref{def}), it suffices to show that, if $(\s R\e)\colon 1\to T\overset{j}\to H\overset{p}{\to} G\to 1$ is any $t$-resolution of $G$, then there exists an isomorphism in $\dbs$
\begin{equation}\label{isor}
\upicgs\isoto \pi_{1}^{\le D}\be(\s R\e)
\end{equation}
which is functorial in $G$. We begin by noting that
\begin{equation}\label{not}
\upic_{S}\lbe(\e j)\be\circ\be \upic_{S}\lbe(\e p)=\upic_{S}\lbe(\e p\lbe\circ\lbe j)=\upic_{S}\lbe(\e \varepsilon_{G}\be\circ\be\pi_{\le T})=0,
\end{equation}	
since $\upic_{S}\lbe(\e \varepsilon_{G})\colon \upicgs\to\upic_{S/S}=0$ is the zero morphism. Now, since $\dbs$ is a triangulated category, there exists a distinguished triangle in $\dbs$ containing the morphism $\upic_{S}(\e p)\colon \upicgs\to \upic_{H/S}$, i.e., for some object $Z$ of $\dbs$, the top row of the following diagram is a distinguished triangle in $\dbs$:
\begin{equation}\label{t7}
\xymatrix{
\upicgs\ar@{-->}[d]_{f}\ar[rr]^(.45){\upic_{S}\lbe(\e p)}&&\upic_{H/S} \ar[d]_(.46){g}^(.46){\simeq}\ar[rr]^{u}&& Z\ar@{-->}[d]_(.45){h}\ar[r]&\upicgs[1]\ar@{-->}[d]_{f[1]}\\
C^{\e\bullet}\lbe({\rm U}_{\be S}(\e j\e))\ar[rr]^{v}&& {\rm U}_{H/S}[1]\ar[rr]^{{\rm U}_{\be S}(\e j\e)[1]}&& {\rm U}_{T/S}[1]\ar[r]&C^{\e\bullet}\lbe({\rm U}_{\be S}(\e j\e))[1].
}
\end{equation}
The bottom row of the above diagram is a distinguished triangle of the form \eqref{v} and the map labeled $g$ above is the isomorphism of Corollary \ref{cder}. Note that, since $H^{r}\lbe(\le \upic_{H/S})=0$ for $r\neq -1$ and
$H^{r}(\e\upicgs)=0$ for $r\neq -1,0$, we have $H^{r}(\e Z\le)=0$ for $r\neq -2, -1$. Now, by \eqref{not} and the commutativity of diagram \eqref{cderr} applied to the $S$-morphism $j$, we have ${\rm U}_{\be S}(\e j\e)[1]\circ g\circ \upic_{S}\lbe(\e p)=0$. Thus, by \cite[Proposition 1.1.9, p.~23]{fp}, there exist morphisms $f\colon \upicgs\to C^{\e\bullet}\lbe({\rm U}_{\be S}(\e j\e))$ and $h\colon Z\to {\rm U}_{T/S}[1]$ in diagram \eqref{t7} such that $(f,g,h)$ is a morphism of distinguished triangles. Now, since $H^{-1}\lbe(\e v)$ is the inclusion $\krn\le {\rm U}_{S}(\e j)\hookrightarrow {\rm U}_{H/S}$, the commutativity of the left-hand square in \eqref{t7} shows that the map $H^{-1}\lbe(\e f\le )\colon H^{-1}\lbe(\e\upicgs)\to H^{-1}(\e C^{\e\bullet}\lbe({\rm U}_{\be S}(\e j\e)))$ is the isomorphism $\ugs\isoto \krn\le {\rm U}_{S}(\e j)$ of Proposition \eqref{ctl}(i). Further, $H^{-2}(\e Z\le)=0$, whence $H^{r}(\e Z\le)=0$ for all $r\neq -1$. We now consider the diagram
\begin{equation}\label{t8}
\xymatrix{
C^{\e\bullet}\lbe({\rm U}_{\be S}(\e j\e))\ar@{-->}[d]_(.45){\widetilde{f}}\ar[rr]^{v}&&{\rm U}_{H/S}[1]\ar[d]_{g^{-1}}^{\simeq}\ar[rr]^{{\rm U}_{\be S}(\e j\e)[1]}&& {\rm U}_{T/S}[1]\ar@{-->}[d]_(.45){\widetilde{h}}\ar[r]&C^{\e\bullet}\lbe({\rm U}_{\be S}(\e j\e))[1]\ar@{-->}[d]_(.45){\widetilde{f}\e[1]}\\
\upicgs\ar[rr]^(.45){\upic_{S}\lbe(\e p)}&&\upic_{H/S}\ar[rr]^{u}&& Z\ar[r]&\upicgs[1].	
}
\end{equation}
We claim that the composition $u\circ g^{-1}\be\circ v\e\colon\lbe C^{\e\bullet}\lbe({\rm U}_{\be S}(\e j\e))\to Z$ is the zero morphism. Indeed, since $v\circ f=g\circ \upic_{S}\lbe(\e p)$ and $H^{-1}\lbe(\e f\le )$ is an isomorphism, we have
\[
H^{-1}(\e u)\circ H^{-1}(\e g^{-1})\circ H^{-1}(v)=H^{-1}(\e u)\circ H^{-1}(\e \upic_{S}\lbe(\e p))\circ H^{-1}(\e f\le)^{-1}=0
\]
since $u\circ\upic_{S}\lbe(\e p)=0$ \cite[Corollary 1.2.2, p.~97]{v}. We conclude, as above, that there exist morphisms $\widetilde{f}$ and $\widetilde{h}$ in diagram \eqref{t8} such that $(\widetilde{f},g^{-1},\widetilde{h}\e)$ is a morphism of distinguished triangles. Now the concatenation of diagrams \eqref{t8} and \eqref{t7} (in that order) is a diagram of the form
\begin{equation}\label{t9}
\xymatrix{
C^{\e\bullet}\lbe({\rm U}_{\be S}(\e j\e))\ar@{-->}[d]_{\alpha}\ar[rr]^{v}&& {\rm U}_{H/S}[1]\ar@{=}[d]\ar[rr]^{{\rm U}_{\be S}(\e j\e)[1]}&& {\rm U}_{T/S}[1]\ar@{-->}[d]_{\beta}\ar[r]&C^{\e\bullet}\lbe({\rm U}_{\be S}(\e j\e))[1]\ar@{-->}[d]^{\alpha[1]}\\
C^{\e\bullet}\lbe({\rm U}_{\be S}(\e j\e))\ar[rr]^{v}&& {\rm U}_{H/S}[1]\ar[rr]^{{\rm U}_{\be S}(\e j\e)[1]}&& {\rm U}_{T/S}[1]\ar[r]& C^{\e\bullet}\lbe({\rm U}_{\be S}(\e j\e))[1]
}
\end{equation}
(namely for $\alpha=f\be\circ\be \widetilde{f}$ and $\beta=h\be\circ\be \widetilde{h}$\e). Since $H^{0}(\e C^{\e\bullet}\lbe({\rm U}_{\be S}(\e j\e)))=\cok {\rm U}_{\be S}(\e j\e)\simeq\mu^{\e *}$ by the proof of Proposition \ref{ctl} (see diagram \eqref{dia}) and ${\rm U}_{T/S}\simeq T^{\e*}$ by Lemma \ref{gros}, we have
\[
\Hom_{\dbs}\lle(\e C^{\e\bullet}\lbe({\rm U}_{\be S}(\e j\e)), {\rm U}_{T/S})=\Hom(\e\mu^{\e *},T^{\e*})=0.
\]
Thus \cite[Proposition 1.1.9, p.~23]{fp} shows that there exist {\it unique} morphisms $\alpha$ and $\beta$ in diagram \eqref{t9} such that $(\alpha, 1_{\e {\rm U}_{\lbe H\lbe/\lbe S}[1]},\beta\e)$ is a morphism of distinguished triangles, i.e.,  $\alpha=1_{\e C^{\le\bullet}\lbe({\rm U}_{\be S}(\e j\e))}$ and $\beta=1_{\e {\rm U}_{\lbe T\lbe/\lbe S}[1]}$. We conclude that $f\be\circ\be \widetilde{f}=1_{\e C^{\le\bullet}\lbe({\rm U}_{\be S}(\e j\e))}$, whence $H^{\le 0}\lbe(\e f\le )\colon H^{\le 0}\lbe(\e\upicgs)\to H^{\le 0}(\e C^{\e\bullet}\lbe({\rm U}_{\be S}(\e j\e)))$ is a surjective morphism of \'etale sheaves on $S$. Since $H^{\le 0}\lbe(\e\upicgs)=\picgs$ and $H^{\le 0}(\e C^{\e\bullet}\lbe({\rm U}_{\be S}(\e j\e)))=\cok {\rm U}_{\be S}(\e j\e)$ are both isomorphic to the \'etale sheaf $\mu^{\le *}$, which has finite stalks, a counting argument now shows that $H^{\le 0}\lbe(\e f\le )$ is, in fact, an isomorphism of \'etale sheaves on $S$. We conclude that $f\colon\upicgs\to C^{\e\bullet}\lbe({\rm U}_{\be S}(\e j\e))$ is an isomorphism in $\dbs$. The composition of the preceding isomorphism and the canonical isomorphism $C^{\e\bullet}\lbe({\rm U}_{\be S}(\e j\e))\isoto \pi_{1}^{\le D}\be(\s R\e)$ of Proposition \ref{ctl}(ii) is the isomorphism \eqref{isor}.

It remains to show that the isomorphism $\upicgs\isoto\pi_{1}(G\e)^{D}$ thus obtained is functorial in $G$. Let $\varphi\colon G^{\e\prime}\to G$ be a morphism of reductive $S$-group schemes. By \cite[Lemma 3.3]{bga}, there exists a $t$-resolution of $\varphi$, i.e., an exact and commutative diagram of reductive $S$-group schemes
\begin{equation}\label{bga}
\xymatrix{
1\ar[r] & T^{\e\prime}\ar[r]^{j^{\e\prime}}\ar[d]^{\gamma}&  H^{\e\prime}\ar[r]^{p^{\e\prime}}\ar[d]^{\delta}&  G^{\e\prime}\ar[r]\ar[d]^{\varphi} &1\qquad (\s R^{\e\prime})\\
1\ar[r] & T\ar[r]^{j}&  H\ar[r]^{p}&  G\ar[r]&1\qquad (\s R\e),
}
\end{equation}
where the top and bottom rows are $t$-resolutions of $G^{\e\prime}$ and $G$, respectively. The left-hand square in the above diagram induces a morphism of complexes $\theta\colon C^{\e\bullet}\lbe({\rm U}_{\be S}(\e j\e))\to C^{\e\bullet}\lbe({\rm U}_{\be S}(\e j^{\e\prime}\e))$ whose components are ${\rm U}_{\be S}(\e \delta\e)$ and ${\rm U}_{\be S}(\e \gamma\e)$ in degrees $-1$ and $0$, respectively. We also note that, since the map $f_{(G\le)}=f_{(G,\e\s R\le)}\colon\upicgs\to C^{\e\bullet}\lbe({\rm U}_{\be S}(\e j\e))$ in diagram \eqref{t7} is an isomorphism, the map $h\colon Z\to {\rm U}_{T/S}[1]$ in that diagram is an isomorphism as well \cite[Corollary 1.2.3, p.~97]{v}. Now let $f_{(G^{\le\prime}\le)}=f_{(G^{\le\prime},\e\s R^{\le\prime}\le)}\colon\upic_{\e G^{\le\prime}\!/S}\isoto C^{\e\bullet}\lbe({\rm U}_{\be S}(\e j^{\le\prime}\e))$ and consider the following diagram whose rows are distinguished triangles in $\dbs$:
\begin{equation}\label{curvy}
\xymatrix{
\ar@/_4.2pc/[ddd]_(.5){\theta}C^{\e\bullet}\lbe({\rm U}_{\be S}(\e j\e))\ar[d]_{f_{(G\le)}^{-1}}^{\simeq}\ar[rr]^{v}&& {\rm U}_{H/S}[1]\ar[d]_{g_{(H)}^{-1}}^{\simeq}\ar[rr]^{{\rm U}_{\be S}(\e j\e)[1]}&& {\rm U}_{T/S}[1]\ar@/^2.3pc/@{-->}[ddd]_(.5){m}\ar[d]_{h^{-1}}^{\simeq}\ar[r]&C^{\e\bullet}\lbe({\rm U}_{\be S}(\e j\e))[1]\ar[d]_{f_{(G\le)}^{-1}[1]}^{\simeq}\ar@/^3.2pc/[ddd]^(.5){\theta\le[1]}\\
\upicgs\ar[d]_{\upic_{\lbe S}(\varphi)}\ar[rr]^(.45){\upic_{S}\lbe(\e p)}\ar @{} [drr]|{\!\!\!\!\!\!\rm (\e \bf{I}\e)}&&\upic_{H/S} \ar[d]_{\upic_{\lbe S}(\delta)}\ar[rr]^{u}&& Z\ar@{-->}[d]_{\!\!\! l}\ar[r]&\upicgs[1]\ar[d]_{\upic_{\lbe S}(\varphi)[1]}\\
\upic_{\e G^{\le\prime}\be/S}\ar[d]_{f_{(G^{\le\prime}\le)}}^{\simeq}\ar[rr]^(.45){\upic_{S}\lbe(\e p^{\e\prime})}&&\upic_{\e H^{\le\prime}\be/S} \ar[d]_{g_{(H^{\le\prime})}}^{\simeq}\ar[rr]^{u^{\e\prime}}&& Z^{\e\prime}\ar[d]_{h^{\e\prime}}^{\simeq}\ar[r]&\upic_{\e G^{\e\prime}\be/S}[1]\ar[d]_{f_{(G^{\le\prime}\le)}[1]}^{\simeq}\\
C^{\e\bullet}\lbe({\rm U}_{\be S}(\e j^{\e\prime}\e))\ar[rr]^{v^{\e\prime}}&& {\rm U}_{H^{\le\prime}\be/S}[1]\ar[rr]^{{\rm U}_{\be S}(\e j^{\e\prime}\e)[1]}&& {\rm U}_{T^{\le\lle\prime}\be/S}[1]\ar[r]&C^{\e\bullet}\lbe({\rm U}_{\be S}(\e j^{\e\prime}\e))[1].}
\end{equation}
Let
\[
\begin{array}{rcl}
\lambda_{1}&=&f_{(G^{\le\prime}\le)}\be\circ\be\upic_{\lbe S}(\varphi)\circ f_{(G\le)}^{-1}\\
\lambda_{2}&=&g_{(H^{\le\prime})}\be\circ\be\upic_{\lbe S}(\delta)\circ g_{(H)}^{-1}
\end{array}
\]
be the first and second vertical compositions in \eqref{curvy}, respectively. By the definitions of $f_{(G\le)}$ and $f_{(G^{\le\prime})}$, $(f_{(G)}^{-1},g_{(H)}^{-1},h^{-1})$ and  $(f_{(G^{\le\prime})},g_{(H^{\le\prime})},h^{\le\prime})$ are morphisms of distinguished triangles. Further, the square labeled $(\e \bf{I}\e)$ above commutes since \eqref{bga} commutes. On the other hand, it follows from the definitions of the maps $g_{(H)}, g_{(H^{\le\prime})},v, v^{\le\prime}$ \eqref{v} and $\theta$ that the following diagram commutes:
\[
\xymatrix{ 
C^{\e\bullet}\lbe({\rm U}_{\be S}(\e j\e))\ar[d]_(.45){\theta}\ar[r]^(.55){v}&{\rm U}_{H/S}[1]\ar[d]^{\lambda_{2}}\\
C^{\e\bullet}\lbe({\rm U}_{\be S}(\e j^{\e\prime}\e))\ar[r]^(.55){v^{\le\prime}}&{\rm U}_{H^{\le\prime}\be/S}[1].
}
\]
Thus, by \cite[Proposition 1.1.9, p.~23]{fp}, there exist morphisms $l\colon Z\to Z^{\e\prime}$ and $m\colon {\rm U}_{T\be/S}[1]\to {\rm U}_{T^{\le\prime}\be/S}[1]$ in \eqref{curvy} such that $(\lambda_{1},\lambda_{2},h^{\e\prime}\circ l\circ h^{-1}\e)$ and $(\theta, \lambda_{2},m\e)$ are morphisms of distinguished triangles with the same second component $\lambda_{2}$. Now, since
\[
\Hom_{\dbs}\lle(\e C^{\e\bullet}\lbe({\rm U}_{\be S}(\e j\e)),{\rm U}_{T^{\le\prime}\be/S})=\Hom\lle(\e\mu^{\e *}, (T^{\e\prime})^{*})=0,
\]
the uniqueness assertion in \cite[Proposition 1.1.9, p.~23]{fp} yields $\theta=\lambda_{1}$, i.e., the following diagram commutes:
\[
\xymatrix{ 
\upicgs\ar[d]_(.45){\upic_{S}\lbe(\e \varphi)}\ar[r]^(.45){f_{(G\le)}}&C^{\e\bullet}\lbe({\rm U}_{\be S}(\e j\e))\ar[d]^{\theta}\\
\upic_{\e G^{\le\prime}\be/S}\ar[r]^(.45){f_{(G^{\le\prime})}}&C^{\e\bullet}\lbe({\rm U}_{\be S}(\e j^{\e\prime}\e)).
}
\]
Therefore the following diagram commutes as well
\[
\xymatrix{ 
\upicgs\ar[d]_(.45){\upic_{S}\lbe(\e \varphi)}\ar[r]^(.45){f_{(G\le)}}&C^{\e\bullet}\lbe({\rm U}_{\be S}(\e j\e))\ar[d]^{\theta}\ar[r]^{\sim}&\pi_{1}^{\le D}\be(\s R\e)\ar[d]^{\pi_{1}^{\le D}\lbe(\varphi)}\\
\upic_{\e G^{\le\prime}\be/S}\ar[r]^(.45){f_{(G^{\le\prime})}}&C^{\e\bullet}\lbe({\rm U}_{\be S}(\e j^{\e\prime}\e))\ar[r]^{\sim}&\pi_{1}^{\le D}\be(\s R^{\e\prime}\e),
}
\]
where the unlabeled maps are the canonical isomorphisms of Proposition \eqref{ctl}(ii). The proof is now complete.
\end{proof}

The sequences in (ii) and (iii) below generalize (respectively) \cite[(6.11.4) and (6.11.2)]{san}.

\begin{corollary}\label{cor1} Let $S$ be a locally noetherian regular scheme and let
\[
1\to G_{1}\to G_{2}\to G_{3}\to 1
\]	
be an exact sequence of reductive $S$-group schemes. Then the above sequence induces
\begin{enumerate}
\item[(i)] 	a distinguished triangle in $\dbs$
\[
\upic_{\e G_{3}/\lbe S}\to \upic_{\e G_{2}/S}\to \upic_{\e G_{1}\lbe/S}\to\upic_{\e G_{3}/\lbe S}[1],
\]
\item[(ii)] an exact sequence of \'etale sheaves on $S$
\[
1\to G_{3}^{\le*}\to G_{2}^{\le*}\to G_{1}^{\le*}\to\pic_{\be G_{3}/\lbe S}\to \pic_{\be G_{2}/S}\to \pic_{\be G_{1}\lbe/S}\to 1
\]
and
\item[(iii)] an exact sequence of abelian groups
\[
\begin{array}{rcl}
1\to G_{3}^{\e*}\lbe(S\e)\to G_{2}^{\e*}\lbe(S\e)\to G_{1}^{\e*}\lbe(S\e)&\to&\pic G_{3}\to \pic G_{2}\to \npic\be(G_{1}\lbe/\lbe S\e)\\
&\to&\br_{\be\rm a}^{\prime}(G_{3}/S\e)\to\br_{\be\rm a}^{\prime}(G_{2}/S\e)\to\br_{\be\rm a}^{\prime}(G_{1}/S\e).
\end{array}
\]
\end{enumerate}	
\end{corollary}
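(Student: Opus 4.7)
For part (i), I will apply Theorem \ref{ct} to each $G_i$ to obtain functorial isomorphisms $\upic_{G_i/S}\isoto\pi_{1}^{\le D}(G_i)$ in $\dbs$. Proposition \ref{dex} applied to the given short exact sequence yields a distinguished triangle $\pi_{1}^{\le D}(G_3)\to\pi_{1}^{\le D}(G_2)\to\pi_{1}^{\le D}(G_1)\to\pi_{1}^{\le D}(G_3)[1]$ in $\dbs$. The functoriality in Theorem \ref{ct} (applied to the morphisms $G_1\to G_2$ and $G_2\to G_3$) then transports this triangle to the distinguished triangle for $\upic$-complexes asserted in (i).

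For part (ii), each structural morphism $G_i\to S$ is faithfully flat, hence schematically dominant, so the distinguished triangle \eqref{t6} holds and shows that $\upic_{G_i/S}$ has cohomology concentrated in degrees $-1$ and $0$. By \eqref{up-1} together with Lemma \ref{gros} (whose hypotheses hold because each $G_i$ is smooth with connected fibers), $H^{-1}(\upic_{G_i/S})\cong G_i^{\e *}$, and by \eqref{up-0}, $H^{\le 0}(\upic_{G_i/S})={\rm Pic}_{G_i/S}$. Taking the long exact cohomology sequence of the distinguished triangle from (i) and inserting these identifications then gives (ii).

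For part (iii), I plan to apply Corollary \ref{kcor}(iii) directly to the given short exact sequence $1\to G_1\to G_2\to G_3\to 1$. Smoothness and fiber-connectedness are automatic from reductivity, and the $k(\eta)^{\rm s}$-rationality of the fibers $(G_1)_\eta$ at the maximal points $\eta$ of $S$ follows from Proposition \ref{wk}(i) applied to each such fiber. The remaining (and most substantive) hypothesis to verify is the surjectivity of ${\rm Pic}_{G_2/S}\to{\rm Pic}_{G_1/S}$. By Proposition \ref{key1}, this map is canonically identified with a morphism $\mu_2^{\e *}\to\mu_1^{\e *}$, where $\mu_i$ denotes the fundamental group of $G_i^{\e\der}$. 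Lemma \ref{bseq} applied to the given sequence yields an exact sequence $1\to\mu_1\to\mu_2\to\mu_3\to G_1^{\e\tor}\to\cdots$ of $S$-group schemes of multiplicative type, and applying the exact contravariant character functor $M\mapsto M^{\e *}$ yields in particular the required surjection $\mu_2^{\e *}\twoheadrightarrow\mu_1^{\e *}$.

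Corollary \ref{kcor}(iii) then provides the first six arrows of the desired sequence verbatim, with its last three terms reading ${\rm Br}_1^{\e\prime}(G_3/S)\to{\rm Br}_1^{\e\prime}(G_2/S)\overset{\varphi^{\e\prime}}{\to}{\rm Br}_{\varepsilon_{G_1}}^{\e\prime}(G_1/S)$. To rewrite these as ${\rm Br}_{\rm a}^{\e\prime}$-terms, I will use the unit sections $\varepsilon_{G_i}\colon S\to G_i$: each produces by \eqref{spe} a canonical direct-sum decomposition of ${\rm Br}_1^{\e\prime}(G_i/S)$ and by \eqref{csig0} an isomorphism $c_{(G_i),\varepsilon_{G_i}}\colon{\rm Br}_{\varepsilon_{G_i}}^{\e\prime}(G_i/S)\isoto{\rm Br}_{\rm a}^{\e\prime}(G_i/S)$. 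Since the morphisms $G_1\hookrightarrow G_2\twoheadrightarrow G_3$ all respect unit sections, the commutative diagram \eqref{fty} shows that the pullback ${\rm Br}_1^{\e\prime}(G_3/S)\to{\rm Br}_1^{\e\prime}(G_2/S)$ restricts on the $\varepsilon$-summands to a map corresponding under $c_{(G_i),\varepsilon_{G_i}}$ to ${\rm Br}_{\rm a}^{\e\prime}(G_3/S)\to{\rm Br}_{\rm a}^{\e\prime}(G_2/S)$, and Remark \ref{gps2} identifies $\varphi^{\e\prime}$ with the pullback ${\rm Br}_{\rm a}^{\e\prime}(G_2/S)\to{\rm Br}_{\rm a}^{\e\prime}(G_1/S)$ under the same identifications. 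The main obstacle will be the diagram-chase needed to verify that these substitutions preserve exactness at every node of the final sequence.
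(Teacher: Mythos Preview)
Your proposal is correct and follows essentially the same route as the paper: parts (i) and (ii) are identical to the paper's argument, and for (iii) you invoke Corollary~\ref{kcor}(iii) and then convert the $\br_{1}^{\prime}$-terms to $\br_{\rm a}^{\prime}$-terms via the $\varepsilon$-splittings \eqref{spe}, \eqref{csig0}, diagram \eqref{fty}, and Remark~\ref{gps2}, exactly as the paper does.

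Two small remarks. First, your verification of the surjectivity hypothesis via Proposition~\ref{key1} and Lemma~\ref{bseq} is more circuitous than necessary and would require you to check that the isomorphism $\pic_{G/S}\simeq\mu^{*}$ of Proposition~\ref{key1} is natural in $G$; the paper instead reads the surjectivity of $\pic_{G_{2}/S}\to\pic_{G_{1}/S}$ directly off the exact sequence you already established in (ii). Second, in the diagram chase you anticipate, the one step that is not purely formal is showing that the connecting map $\alpha\colon\npic(G_{1}/S)\to\br_{1}^{\prime}(G_{3}/S)$ lands in the summand $\br_{\varepsilon_{3}}^{\prime}(G_{3}/S)$; the paper handles this by applying the functoriality of the units--Picard--Brauer sequence (Remark~\ref{tter}(a)) to the unit sections, observing that $\br_{1}^{\prime}\varepsilon_{3}\circ\alpha$ factors through $\npic(S/S)=0$.
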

\begin{proof} Assertion (i) is immediate from the theorem and Proposition \ref{dex}\e. By \eqref{up-1} and \eqref{up-0}, the distinguished triangle in (i) induces an exact sequence (of \'etale sheaves on $S$\e) $1\to {\rm U}_{G_{3}/S}\to {\rm U}_{G_{2}/S}\to {\rm U}_{G_{1}/S}\to\pic_{\be G_{3}/\lbe S}\to \pic_{\be G_{2}/S}\to \pic_{\be G_{1}\lbe/S}\to 1$. Assertion (ii) now follows from Lemma \ref{gros}. To prove (iii), let $i\colon G_{1}\to G_{2}$ and $p\colon G_{2}\to G_{3}$ be the given $S$-morphisms and, for $j=1,2$ or $3$, let $\pi_{j}\colon G_{j}\to S$ and $\varepsilon_{j}\colon S\to G_{j}$ denote the structural morphism and unit section of $G_{j}$, respectively. Now recall from Corollary \ref{kcor}(iii) the exact sequence of abelian groups
\begin{equation}
\begin{array}{rcl}\label{ele}
0\to G_{3}^{\e *}\be(\lbe S\le)\to G_{2}^{\e *}\be(\lbe S\le)\to G_{1}^{\e *}\be(\lbe S\le)&\to&\pic G_{3}\to\pic G_{2} \to\npic\be(G_{1}\lbe/\lbe S\e)\\
&\overset{\!\alpha}{\to}&\br_{\be 1}^{\prime}(G_{3}\lbe/\lbe S\le)\overset{\!\br_{\be 1}^{\prime}\le p}{\lra}\br_{\be 1}^{\prime}(G_{2}\lbe/\lbe S\le )\overset{\!\varphi^{\le\prime}}{\to} \br_{\be \varepsilon_{1}}^{\prime}\be(G_{1}\lbe/\lbe S\le),
\end{array}
\end{equation}
where $\varphi^{\le\prime}$ is the map \eqref{vphi3p}, and consider the following diagram
\[
\xymatrix{
&\br_{\be \varepsilon_{3}}^{\prime}\be(G_{3}\lbe/\lbe S\le)\ar[rr]^{\br_{\!\varepsilon_{2}\lbe,\e\varepsilon_{3}}^{\prime}\le p}\ar@{^{(}->}[d]&& 
\br_{\be \varepsilon_{2}}^{\prime}\be(G_{2}\lbe/\lbe S\le)\ar[rr]^{\br_{\!\varepsilon_{1}\lbe,\e\varepsilon_{2}}^{\prime}\le i}\ar@{^{(}->}[d]&&\br_{\be \varepsilon_{1}}^{\prime}\be(G_{1}\lbe/\lbe S\le)\ar@{=}[d]\\
\npic\be(G_{1}\lbe/\lbe S\e)\ar@{-->}[ur]^{\alpha^{\prime}}\ar[r]^(.55){\alpha}&\br_{\be 1}^{\prime}\be(G_{3}\lbe/\lbe S\le)\ar[rr]^{\br_{\be 1}^{\prime}\le p}\ar@{->>}[d]&&
\br_{\be 1}^{\prime}\be(G_{2}\lbe/\lbe S\le)\ar[rr]^{\varphi^{\le\prime}}\ar@{->>}[d]&&\br_{\be \varepsilon_{1}}^{\prime}\be(G_{1}\lbe/\lbe S\le)\\ 
&\img \br_{\! 1}^{\prime}\le \pi_{3}\ar[rr]^{\sim}&&\img \br_{\! 1}^{\prime}\le \pi_{2}&&.
}
\]
The bottom horizontal arrow is the isomorphism \eqref{im} induced by $\br_{\be 1}^{\prime}\le p$ and all squares above commute by Remark \ref{gps2} and the  definition of $\br_{\!\varepsilon_{2}\lbe,\e\varepsilon_{3}}^{\prime}\le p$ \eqref{csig1}. Further, the middle row is exact by the exactness of \eqref{ele} and the left-hand and middle columns are (split) exact sequences of abelian groups by \eqref{spe}. Since the bottom map is injective (in fact, an isomorphism), the commutativity of all three squares in the diagram shows that the top row is exact as well. On the other hand, by Remark \ref{tter}(a) applied to the triples $(G_{2}, G_{3},G_{1}\e)$ and $(S,S,S\e)$, the composition of $\alpha$ and the map $\br_{\be 1}^{\lbe\prime}\varepsilon_{3}\colon \br_{\be 1}^{\prime}(G_{3}\lbe/\lbe S\le)\to\brp S$ factors through $\npic_{\be S}(\varepsilon_{1})\colon\npic\be(\lbe G_{1}\lbe/\be S\e)\to \npic\be(\lbe S\lbe/\be S\e)=0$. Thus $\alpha$ factors through the map labeled $\alpha^{\e\prime}$ above and \eqref{eker} (applied to the $S$-morphism $G_{2}\to G_{3}$) shows that $\img\e \alpha^{\e\prime}=\krn\e\br_{\!\varepsilon_{2}\lbe,\e\varepsilon_{3}}^{\prime}\le p$. Thus \eqref{ele} induces an exact sequence
\[
\begin{array}{rcl}
0\to G_{3}^{\e *}\be(\lbe S\le)\to G_{2}^{\e *}\be(\lbe S\le)&\to& G_{1}^{\e*}\be(\lbe S\le)\to\pic G_{3}\to\pic G_{2} \to\npic\be(G_{1}\lbe/\lbe S\e)\\
&\overset{\!\alpha^{\le\prime}}{\to}&\br_{\be \varepsilon_{3}}^{\prime}\be(G_{3}\lbe/\lbe S\le)\overset{\!\be\br_{\!\varepsilon_{2}\lbe,\e\varepsilon_{\lbe 3}}^{\prime}\le p}{\lra}\br_{\be \varepsilon_{2}}^{\prime}\be(G_{2}\lbe/\lbe S\le)\overset{\!\br_{\!\varepsilon_{1}\lbe,\e\varepsilon_{2}}^{\prime}\le i}{\lra} \br_{\be \varepsilon_{1}}^{\prime}\be(G_{1}\lbe/\lbe S\le).
\end{array}
\]
Finally, by the commutativity of diagram \eqref{fty}, the above sequence induces an exact sequence of abelian groups 
\[
\begin{array}{rcl}
0\to G_{3}^{\e *}\be(\lbe S\le)\to G_{2}^{\e *}\be(\lbe S\le)\to G_{1}^{\e *}\be(\lbe S\le)&\to&\pic G_{3}\to\pic G_{2} \to\npic\be(G_{1}\lbe/\lbe S\e)\\
&\overset{\!\alpha^{\le\prime\prime}}{\to}&\br_{\be\rm a}^{\prime}(G_{3}/S\e)\overset{\!\br_{\! \rm a}^{\prime}\e p}{\lra}\br_{\be\rm a}^{\prime}(G_{2}/S\e)\overset{\!\br_{\! \rm a}^{\prime}\e i}{\lra} \br_{\be\rm a}^{\prime}(G_{1}/S\e),
\end{array}
\]
where $\alpha^{\e\prime\prime}=c_{\e(G_{3}),\e \varepsilon_{3}}\circ  \alpha^{\e\prime}$ and $c_{\e(G_{3}),\e \varepsilon_{3}}$ is given by \eqref{csig0}.
\end{proof}

\begin{remark} \label{lastb} In the setting of the corollary assume, in addition, that $S$ is noetherian and irreducible with function field $K$. As in Remark \ref{tter}(c), the canonical maps $G_{i}^{\e *}(S)\to G_{i}^{\e *}(K)$ (where $i=1,2$ and 3) and $\npic\be(G_{1}\lbe/\lbe S\e)\to \pic G_{1,\e K}$ are isomorphisms of abelian groups. Further, by Lemma \ref{picr}, $\pic G_{i}$ is canonically isomorphic to $\pic S\oplus\pic G_{i,\e K}$ for $i=2$ and $3$. Thus the sequence in part (iii) of the corollary is canonically isomorphic to an exact sequence
\[
\begin{array}{rcl}
1\to G_{3}^{\le*}(K\e)\to G_{2}^{\le*}(K\e)&\to& G_{1}^{\le*}(K\e)\to\pic S\oplus\pic G_{3,\e K}\to \pic S\oplus\pic G_{2,\e K}\\
&\to& \pic G_{1,\e K}\to\br_{\be\rm a}^{\prime}(G_{3}/S\e)\overset{\!\br_{\!\lbe \rm a}^{\prime}\e p}{\lra}\br_{\be\rm a}^{\prime}(G_{2}/S\e)\to\br_{\be\rm a}^{\prime}(G_{1}/S\e).
\end{array}
\]
Now, by a functoriality argument similar to that given in Remark \ref{tter}(c), $\krn\e \br_{\!\lbe \rm a}^{\prime}\, p$ is canonically isomorphic to $\krn\e\br_{\!\lbe \rm a}^{\prime}\, p_{K}$. Thus most of the sequence in part (iii) of the corollary is essentially equivalent to the corresponding subsequence over $K$. A similar fact becomes evident when the sequence in Corollary \ref{kcor}(iii) is compared with \cite[(6.10.3), p.~43]{san}.
\end{remark}

\end{document}